\newtheorem{theorem}{Theorem}
\newtheorem{lemma}{Lemma}
\newtheorem{corollary}{Corollary}
\newtheorem{assumption}{Assumption}
\title{Kurtosis-based projection pursuit for matrix-valued data}
\author{
Una Radojicic  \\
  Vienna University of Technology\\
  \texttt{una.radojicic@tuwien.ac.at} \\
\And
Klaus Nordhausen \\
  University of Jyv\"askyl\"a\\
  \texttt{klaus.k.nordhausen@jya.fi} \\
  \And
Joni Virta  \\
  University of Turku\\
  \texttt{joni.virta@utu.fi} \\

  }
\date{\today}
\begin{document}

\maketitle
\begin{abstract}
We develop projection pursuit for data that admit a natural representation in matrix form. For projection indices we propose extensions of the classical kurtosis and Mardia's multivariate kurtosis. The first index estimates projections for both sides of the matrices simultaneously, while the second index finds the two projections separately. Both indices are shown to recover the optimally separating projection for two-group Gaussian mixtures in the full absence of any label information. We further establish the strong consistency of the corresponding sample estimators. Simulations and a real data example on hand-written postal code data are used to demonstrate the method. 
\end{abstract}


\section{Introduction}\label{sec:intro}

Projection pursuit (PP) is a classical multivariate method of compressing information contained in a $ p $-dimensional random vector $ \textbf{x} $ into an easy-to-visualize, low-dimensional form. At its simplest, PP involves choosing a \textit{projection index} $ Q $ (that maps a univariate random variable to a real number) and finding a projection direction $ \textbf{u} \in \mathbb{S}^{p - 1} $ that maximizes $ Q(\textbf{u}' \textbf{x}) $, where  $ \mathbb{S}^{p - 1} $ denotes the unit sphere in $ \mathbb{R}^p $. The index $ Q $ is typically thought to measure ``information'', in one sense or another, and several examples of indices are discussed in the seminal survey article by \cite{huber1985projection}.

Arguably, the need for methods such as projection pursuit that extract information from data that are unfeasible to visualize in their original form has only increased in the previous years, with the ever-increasing output of data from all sources. However, applying projection pursuit to data with large $ n, p $ (and not necessarily $ n < p $)  is less than straightforward due to the following three challenges that PP faces in the high-dimensional context.

\begin{enumerate}
\item[i)] As shown by \cite{diaconis1984asymptotics}, in regimes where $ n, p \rightarrow \infty $ most univariate projections (in the sense of drawing uniformly random projection directions from the unit sphere) of high-dimensional point clouds are approximately normal. This result in conjunction with the standard maxim of projection pursuit that ``Gaussianity is noise'' \citep{huber1985projection} thus implies that in high-dimensional settings there is little information to be found by projection pursuit. Further results in this spirit can be found in \cite{bickel2018projection}.
\item[ii)] \cite{pires2019high} show that when $p \geq n - 1$, a bivariate projection equaling (up to an affine transformation) any given two-dimensional configuration of points can always be found from the data. Note that there is no contradiction involved between points i) and ii) as the former makes an asymptotical statement (as $ n, p \rightarrow \infty $, most projections are normal) while the latter relates to finite samples (for finite samples, any arbitrary configuration can be found).
\item[iii)] Most projection pursuit algorithms scale badly with the dimension $ p $. For example, methods in the classical FastICA-family \citep[see e.g.,][]{Miettinen2017TheSS} involve the inversion of a $ p \times p $ covariance matrix, quickly leading to numerical issues with a growing number of dimensions. The standard evolutionary algorithms~\citep{EvolutionaryAlgorithms2020} suffer even worse as the curse of dimensionality inflates the search space, necessitating the use of a substantial initial population to even adequately cover the whole range of possible solutions~\citep{Salomon2004}.
\end{enumerate}

The above points make applying PP in the high-dimensional setting seem like a rather futile exercise. However, all three can be circumvented by imposing a suitable structure, or a form of \textit{regularization}, on the data. In this work, the regularizing assumption we make is that the observable $ pq $-variate random vector $ \textbf{x} $ has a natural representation as a random $ p \times q $ matrix $ \textbf{X} $, such that $ \mathrm{vec}(\textbf{X}) = \textbf{x} $, where the $ \mathrm{vec} $-operator stacks the columns of its argument from left to right into a tall column vector. Such data are abundant nowadays, the most notable example being image data where the elements of the matrix represent the gray-scale intensities of the individual pixels of an image. Other examples include magnetic resonance imaging (MRI) data or biological abundance data where each observed matrix contains the abundances of a single species in $p$ regions (the rows) over $q$ time points (the columns).

To make use of the matrix structure, we work with projections of the form $ \textbf{u}' \textbf{X} \textbf{v} $ where $ \textbf{u} \in \mathbb{S}^{p - 1} $, $ \textbf{v} \in \mathbb{S}^{q - 1} $. Due to the unit length constraint, such projections have a total of $ p + q - 2 $ degrees of freedom, a stark contrast with the standard approach of treating the matrix $ \textbf{X} $ in the vector form $ \mathrm{vec}(\textbf{X}) $ and working with the usual projections $ \textbf{w}'\mathrm{vec}(\textbf{X}) $ involving $ pq - 1 $ degrees of freedom. As an example, consider a single horizontal slice of an fMRI image, typically $ 64 \times 64 $ pixels in size \citep{lindquist2008statistical}. The dual projection $ \textbf{u}' \textbf{X} \textbf{v} $ on $ \textbf{u} \in \mathbb{S}^{63} $, $ \textbf{v} \in \mathbb{S}^{63} $ involves a total of $ 126 $ parameters, whereas a standard projection after vectorization has a total $ 64^2 - 1 = 4095 $ parameters, with a difference of more than one order of magnitude. 
Of course, this reduction in the number of parameters would be meaningless unless the simpler model provided good fits to real data and, indeed, numerous case studies illustrate that modeling a random matrix through the simultaneous transformation of its rows and columns (as in $ \textbf{u}' \textbf{X} \textbf{v} $) provides interpretable and efficient results, see, e.g., \cite{zhang20052d, beckmann2005tensorial} and the references in \cite{lu2011survey}.

Projections of the form $ \textbf{u}' \textbf{X} \textbf{v} $ are commonly known as (orthogonal) rank-1 tensor projections, and they have been long used in the machine learning literature, most often in the context of classification using projection indices based on second moments, see, e.g., \cite{hua2007face, liu2011discriminant, wu2011local,wu2011rank,zhong-2015}. The term ``rank-1 tensor projection'' stems from the representation $ \textbf{u}' \textbf{X} \textbf{v} = \mathrm{tr}(\textbf{v} \textbf{u}' \textbf{X}) = \langle \textbf{u} \textbf{v}', \textbf{X} \rangle$ of projecting $ \textbf{X} $ onto the rank-1 matrix $ \textbf{u} \textbf{v}' $ (w.r.t. the Euclidean inner product on matrices) and ``orthogonal'' refers to the fact that if several projections are sought, they should be in mutually orthogonal directions (similarly as in PCA). Interestingly, the structure-ignoring projection $ \textbf{w}'\mathrm{vec}(\textbf{X}) $ also has an equivalent representation as a ``rank-$ p $ projection'' (assuming $ p \leq q $)  as $ \textbf{w}'\mathrm{vec}(\textbf{X}) =  \langle \textbf{W}, \textbf{X} \rangle$, where $ \mathrm{vec} (\textbf{W}) = \textbf{w} $. This shows that the two approaches, i.e., rank-1 tensor projections and projection after vectorization, are actually the two extreme cases of a range of projections of different ranks. 

The viewpoint we take for our analysis of matrix projection pursuit is that of clustering, an objective often associated with projection pursuit, see \cite{friedman1974projection, pena2001cluster, bolton2003projection, loperfido2015vector,radojicic2021}. Our work is especially in the spirit of: i) \cite{pena2001cluster} who showed that projection pursuit with the classical kurtosis as the projection index can recover the optimal projection (in the sense of LDA) for separating the classes of a mixture distribution, in absence of the class label information, and of ii) \cite{radojicic2021} who continued the work of \cite{pena2001cluster} by investigating the asymptotic efficiencies of various standard indices under the two-component Gaussian mixture model.

In the following, we consider two novel objective functions (projection indices) that are based on the fourth moments of the matrix~$ \textbf{X} $,
\begin{align}\label{eq:objective_functions}
\frac{\mathrm{E} \left( [ \textbf{u}' \{ \textbf{X} - \mathrm{E}(\textbf{X}) \} \textbf{v} ]^4 \right) }{ \left\{ \mathrm{E} \left( [ \textbf{u}' \{ \textbf{X} - \mathrm{E}(\textbf{X}) \} \textbf{v} ]^2 \right) \right\}^2} \quad \mbox{and} \quad \mathrm{E}\left[ \left\{ \textbf{u}' \tilde{\textbf{X}} \left[ \mathrm{E} \left( \tilde{\textbf{X}}' \textbf{u} \textbf{u}' \tilde{\textbf{X}} \right) \right]^{-1} \tilde{\textbf{X}}' \textbf{u} \right\}^2  \right],
\end{align}
where $ \tilde{\textbf{X}} := \textbf{X} - \mathrm{E}(\textbf{X}) $, see Section \ref{sec:method} for their motivation as extensions of the classical kurtosis. Analogously to \cite{pena2001cluster}, our Theorems~\ref{theo:matrix_lda_optimal_higher_rank} and~\ref{theo:matrix_lda_optimal_mode_wise} in Section~\ref{sec:lda} show that both indices in \eqref{eq:objective_functions} can be used to recover the direction that is optimal for separating the classes of a mixture of matrix normal distributions (in the sense of LDA) without any knowledge on the class memberships. Moreover, similarly to \cite{radojicic2021}, we explore the asymptotic behavior of the resulting estimators. However, we stress that our contributions are more than simple generalizations of the vectorial results of \cite{pena2001cluster} and \cite{radojicic2021} to the matrix case and go beyond these works for the following reasons: 
\begin{itemize}
	\item[i)] Unlike in the vector case, the projections $ \textbf{u}' \textbf{X} \textbf{v} $ involve two projection directions, $ \textbf{u} $ and $ \textbf{v} $, and we propose two competing indices for determining them: optimizing the first index in \eqref{eq:objective_functions} determines both directions simultaneously (the index involves both $ \textbf{u} $ and $ \textbf{v} $) while the second index in \eqref{eq:objective_functions} can be used to determine just the direction $ \textbf{u} $, in isolation of $ \textbf{v} $ (after which $ \textbf{v} $ can be determined by the same index after replacing $ \textbf{X} $ with its transpose due to the symmetric nature of the projection $ \textbf{u}' \textbf{X} \textbf{v} $).
	\item[ii)] The optimal projection direction $\textbf{W}_{\mathrm{LDA}}$ is a matrix (see Lemma \ref{lem:matrix_lda_optimal} in Section~\ref{sec:lda} for its exact form) and to fully recover it we need to extract a total of $ d $ pairs of projections $(\textbf{u}_1, \textbf{v}_1), \ldots , (\textbf{u}_d, \textbf{v}_d)$ where $ d $ is the rank of $\textbf{W}_{\mathrm{LDA}}$. Theorems~\ref{theo:matrix_lda_optimal_higher_rank} and~\ref{theo:matrix_lda_optimal_mode_wise} in Section~\ref{sec:lda} show that these pairs actually recover (in the specific sense described in Section~\ref{sec:lda}) the singular value decomposition of the optimal projection direction $\textbf{W}_{\mathrm{LDA}}$. Hence, to alleviate computational burden (which, while significantly lighter than in standard projection pursuit, can still be somewhat heavy for large data sets), accurate approximations of $\textbf{W}_{\mathrm{LDA}}$ can be obtained by extracting only a small number of pairs of directions. The theorems also reveal that, of the two indices in \eqref{eq:objective_functions}, the former is always a Fisher consistent estimator of the optimal projection direction $\textbf{W}_{\mathrm{LDA}}$ while the latter requires a mild (but in practice difficult to verify) assumption on the singular values of $\textbf{W}_{\mathrm{LDA}}$ to achieve Fisher consistency, see Section \ref{sec:lda} for details and intuition. There are no counterparts for these results in the context of \cite{pena2001cluster} and \cite{radojicic2021} where the optimal projection direction is a vector.
\end{itemize}

As mentioned already earlier, the majority of the literature combining projections with matrix-valued data focuses on using second-order projection indices for classification purposes. Unlike the ones in \eqref{eq:objective_functions}, these indices are often \textit{supervised}, in the sense that they involve knowledge on the class labels of the data, restricting their use strictly to the classification setting where a training sample with known labels is available. Two of the most popular \textit{unsupervised} projection methods (i.e., ones that do not use require label information) are known as MPCA \citep{ye2005generalized} and (2D)$ ^2 $PCA \citep{zhang20052d}, and they can be seen as second-order counterparts for our proposed fourth-order indices in \eqref{eq:objective_functions}. 
Namely, in MPCA one searches for projection directions  $ \textbf{u} \in \mathbb{S}^{p - 1} $, $ \textbf{v} \in \mathbb{S}^{q - 1} $ which maximize the quantity
\[
\mathrm{E} \left( [ \textbf{u}' \{ \textbf{X} - \mathrm{E}(\textbf{X}) \} \textbf{v} ]^2 \right).
\]
Whereas, analogously to the second index in \eqref{eq:objective_functions}, (2D)$ ^2 $PCA involves separately  directions $ \textbf{u} \in \mathbb{S}^{p - 1} $ and $ \textbf{v} \in \mathbb{S}^{q - 1} $, and maximizes both
\[
\mathrm{E} \left[ \textbf{u}' \{ \textbf{X} - \mathrm{E}(\textbf{X}) \}  \{ \textbf{X} - \mathrm{E}(\textbf{X}) \}' \textbf{u} \right] \quad \mbox{and} \quad \mathrm{E} \left[ \textbf{v}' \{ \textbf{X} - \mathrm{E}(\textbf{X}) \}'  \{ \textbf{X} - \mathrm{E}(\textbf{X}) \} \textbf{v} \right].
\]
In Section \ref{sec:lda} we will compare the indices in \eqref{eq:objective_functions} to 2D$ ^2 $PCA and MPCA and show that the latter are, in general, unable to recover the optimally separating direction under a matrix normal mixture. This result is analogous to the inability of PCA to recover Fisher's linear discriminant in standard LDA. In addition, in the examples of Section \ref{sec:simulations} we will compare our unsupervised proposal \eqref{eq:objective_functions} to matrix-valued LDA, which can be seen as the most direct supervised solution to the problem. Finally, note that the term ``matrix LDA'' does not have an agreed-upon standard definition in the literature (see, e.g., \cite{Hu2020} for a recent proposal) and the version we use is based on the comparison of density functions, see Lemma \ref{lem:matrix_lda_optimal}.


The rest of the paper is organized as follows. In Section \ref{sec:method} we motivate the two indices in \eqref{eq:objective_functions} and lay the basis on our methodology by showing that the optimization of the indices is well-defined under a certain mild condition on the data distribution. Section \ref{sec:lda} studies the behavior of matrix projection pursuit under the matrix normal mixture model, establishing several results on Fisher consistency and concludes with a theoretical comparison to 2D$ ^2 $PCA and MPCA. In Section \ref{sec:limiting} we further show that both indices yield strongly consistent estimators of the optimal direction under matrix normal mixtures. Afterward, we shift our focus towards the first index in \eqref{eq:objective_functions} and provide an algorithm for optimizing it (Section \ref{sec:algorithm}). The reason for excluding the second index in \eqref{eq:objective_functions} from the further study is that we found optimizing it to be computationally more demanding than optimizing the former index, while at the same time one of the motivations for introducing it was to ease the computational burden. Thus, we have   left it for future work. 
Finally, in Section \ref{sec:simulations}, we apply the method first to simulated data to investigate its finite-sample performance, and later to a hand-written digit data set, including also a comparison to competing methods. The proofs of the technical results are collected in Appendix~\ref{sec:proofs}.


\section{Notation}\label{sec:notation}

Throughout the paper, we work in a probability space $(\Omega, \mathcal{F}, \mathbb{P})$. We let $\mathbb{S}^{p-1}$ denote the unit sphere in $\mathbb{R}^p$. We assume that the dimensions $p, q \in \mathbb{N}$ are fixed throughout and denote $\mathcal{U}_0 := \mathbb{S}^{p-1} \times \mathbb{S}^{q-1}$. Given a function $g: \mathcal{U}_0 \rightarrow \mathbb{R}$ and collections of matrices, $\mathcal{G}_{1} = \{ \textbf{G}_{11}, \ldots , \textbf{G}_{1(d-1)} \} \in (\mathbb{R}^{p \times p})^{d-1}$ and $\mathcal{G}_{2} = \{ \textbf{G}_{21}, \ldots , \textbf{G}_{2(d-1)} \} \in (\mathbb{R}^{q \times q})^{d-1}$, we say that a collection of pairs of vectors, $ (\textbf{u}_1, \textbf{v}_1), \ldots , (\textbf{u}_d, \textbf{v}_d) \in \mathcal{U}_0 $, $d \leq \min\{ p , q \}$, is a sequence of $(\mathcal{G}_{1}, \mathcal{G}_{2})$-minimizers ($(\mathcal{G}_{1}, \mathcal{G}_{2})$-maximizers) of $ g $ if the following conditions hold:
\begin{itemize}
	\item[i)] The pair $ (\textbf{u}_1, \textbf{v}_1) $ minimizes (maximizes) $ g $ in $ \mathcal{U}_0 $.
	\item[ii)] For $ j = 2, \ldots , d $, the pair $ (\textbf{u}_j, \textbf{v}_j) $ minimizes (maximizes) $ g $ in $ \mathcal{U}_0 $ under the constraints that $ \textbf{u}_j' \textbf{G}_{1k} \textbf{u}_k = 0 $ and $ \textbf{v}_j' \textbf{G}_{2k} \textbf{v}_k = 0 $ for all $ k = 1, \ldots , j - 1 $
\end{itemize}
While it is not explicit in the above notation, we also allow the matrices $\textbf{G}_{1k}, \textbf{G}_{2k}$, $k = 1, \ldots d - 1 $, to depend on the earlier optimizers. For example, the matrix $\textbf{G}_{11}$ might depend on the first stage optimizers $\textbf{u}_1$ and $\textbf{v}_1$.

Analogously, in the case of a pair of single-argument functions, $g_1: \mathbb{S}^{p-1} \rightarrow \mathbb{R}$, $g_2: \mathbb{S}^{q-1} \rightarrow \mathbb{R}$, and the collections of matrices, $\mathcal{G}_{1} = \{ \textbf{G}_{11}, \ldots , \textbf{G}_{1(d-1)} \} \in (\mathbb{R}^{p \times p})^{d-1}$ and $\mathcal{G}_{2} = \{ \textbf{G}_{21}, \ldots , \textbf{G}_{2(d-1)} \} \in (\mathbb{R}^{q \times q})^{d-1}$, we say that a collection of pairs of vectors, $ (\textbf{u}_1, \textbf{v}_1), \ldots , (\textbf{u}_d, \textbf{v}_d) \in \mathcal{U}_0 $, $d \leq \min\{ p , q \}$, is a sequence of $(\mathcal{G}_{1}, \mathcal{G}_{2})$-minimizers ($(\mathcal{G}_{1}, \mathcal{G}_{2})$-maximizers) of $ (g_1, g_2) $ if the following conditions hold:
\begin{itemize}
	\item[i)] The vector $ \textbf{u}_1 $ minimizes (maximizes) $ g_1 $ in $ \mathbb{S}^{p - 1} $ and the vector $ \textbf{v}_1 $ minimizes (maximizes) $ g_2 $ in $ \mathbb{S}^{q - 1} $.
	\item[ii)] For $ j = 2, \ldots , d $, the vector $ \textbf{u}_1 $ minimizes (maximizes) $ g_1 $ in $ \mathbb{S}^{p - 1} $ and the vector $ \textbf{v}_1 $ minimizes (maximizes) $ g_2 $ in $ \mathbb{S}^{q - 1} $, respectively, under the constraints that $ \textbf{u}_j' \textbf{G}_{1k} \textbf{u}_k = 0 $ and $ \textbf{v}_j' \textbf{G}_{2k} \textbf{v}_k = 0 $ for all $ k = 1, \ldots , j - 1 $.
\end{itemize} 

\section{Projection pursuit for matrix-valued data}\label{sec:method}

\subsection{Index for simultaneous estimation of the directions}

As described in the introduction, we propose projection indices for determining the directions $ \textbf{u}, \textbf{v} $ in the projection $ \textbf{u}' \textbf{X} \textbf{v} $ both simultaneously and one-by-one. Beginning with the former, recall that the kurtosis of a non-degenerate univariate random variable $ x $ (with finite fourth moment) is defined as
\[
\kappa_x := \frac{\mathrm{E} \left[ \{ x - \mathrm{E}(x) \}^4 \right] }{ \left\{ \mathrm{E} \left[ \{ x- \mathrm{E}(x) \}^2 \right] \right\}^2}.
\]
(some authors subtract 3 from $ \kappa_x $ to make the kurtosis of normal distribution equal zero, but this change plays no role in the context of maximizing/minimizing kurtosis). Hence, given a $ p \times q $ random matrix $ \textbf{X} $ (with finite fourth moments) and the projection direction $ (\textbf{u}, \textbf{v}) \in \mathcal{U}_0 $, the kurtosis of the projection $ \textbf{u}' \textbf{X} \textbf{v} $ is,
\[
\kappa_{\textbf{X}}(\textbf{u}, \textbf{v}) := \frac{\mathrm{E} \left( [ \textbf{u}' \{ \textbf{X} - \mathrm{E}(\textbf{X}) \} \textbf{v} ]^4 \right) }{ \left\{ \mathrm{E} \left( [ \textbf{u}' \{ \textbf{X} - \mathrm{E}(\textbf{X}) \} \textbf{v} ]^2 \right) \right\}^2}.
\]
The scale invariance of $\kappa_{\textbf{X}}$ guarantees that it is indeed sufficient to restrict its domain to the ``unit sphere'' $\mathcal{U}_0$. Note also that, for $\kappa_{\textbf{X}}$ to be well-defined in the whole of $\mathcal{U}_0$, it is necessary that the random variable $\textbf{u}' \{ \textbf{X} - \mathrm{E}(\textbf{X}) \} \textbf{v}$ is, for all $(\textbf{u}, \textbf{v}) \in \mathcal{U}_0$, not almost surely a constant. In Appendix \ref{sec:well_defined}, we discuss this condition more closely and derive, in two specific contexts, more easily verifiable forms for it.



\subsection{Index for separate estimation of the directions}

As an alternative to $ \kappa_{\textbf{X}} $, we provide an index that is a function of $ \textbf{u} $ only. Such an index can be useful when only the rows of $ \textbf{X} $ are of interest (consider, e.g., a situation where the columns of $ \textbf{X} $ denote different time points during which the row variables are measured, as in \cite{pfeiffer2012sufficient}, and assume that we are interested solely in the relationships between the row variables) or when, e.g., due to computational reasons, one wants to estimate $ \textbf{u} $ and $ \textbf{v} $ separately. To measure the interestingness of the ``partial'' projection $ \textbf{u}' \textbf{X} $, we use the famous Mardia's measure of multivariate kurtosis \cite{mardia1970measures}, defined for a $ q $-dimensional random vector $ \textbf{x} $ as
\[
\psi(\textbf{x}) := \mathrm{E}\left( [ \{ \textbf{x} -  \mathrm{E}(\textbf{x}) \}' \mathrm{Cov}(\textbf{x})^{-1} \{ \textbf{x} -  \mathrm{E}(\textbf{x}) \} ]^2  \right),
\]
Hence, given a $ p \times q $ random matrix $ \textbf{X} $ (with finite fourth moments) and the projection direction $ \textbf{u} \in \mathbb{S}^{p - 1} $ Mardia's kurtosis of the projection $ \textbf{u}' \textbf{X} $ is,
\begin{align}\label{eq:psi_definition}
\psi_{\textbf{X}}( \textbf{u} ) := \mathrm{E}\left[ \left\{ \textbf{u}' \tilde{\textbf{X}} \left[ \mathrm{E} \left( \tilde{\textbf{X}}' \textbf{u} \textbf{u}' \tilde{\textbf{X}} \right) \right]^{-1} \tilde{\textbf{X}}' \textbf{u} \right\}^2  \right],
\end{align}
where $ \tilde{\textbf{X}} = \textbf{X} - \mathrm{E}(\textbf{X}) $. Note that, to estimate the projection direction $ \textbf{v} $, the right-hand side analogue of $ \psi_{\textbf{X}} $ is naturally needed. However, as the roles of  $ \textbf{u} $ and $ \textbf{v} $ are fully symmetric in $ \textbf{u}' \textbf{X} \textbf{v} $ under the transposition of $\textbf{X}$, everything that we say about $ \textbf{u} $ applies equally to $ \textbf{v} $ (after transposition) and, thus, we will for the remainder of this section formulate our results on the mode-wise index $ \psi_{\textbf{X}} $ for the $ \textbf{u} $-side of the projection only. 
For $\psi_{\textbf{X}}$ to be well-defined in $\mathbb{S}^{p - 1}$, the matrix $\mathrm{E} [ \{ \textbf{X} - \mathrm{E}(\textbf{X}) \}' \textbf{u} \textbf{u}' \{ \textbf{X} - \mathrm{E}(\textbf{X}) \} ]$ needs to be invertible for all $\textbf{u} \in \mathbb{S}^{p-1}$ and this condition (which is in fact equivalent to the corresponding condition for $\kappa_\textbf{X}$) is further discussed in Appendix \ref{sec:well_defined}. Finally note that both $ \kappa_{\textbf{X}}(\textbf{u}, \textbf{v}) $ and $ \psi_{\textbf{X}}(\textbf{u}) $ are true generalizations of the classical kurtosis in the sense that if the column dimension is degenerate $( q = 1 )$, then both indices are equal to the kurtosis of the univariate projection $ \textbf{u}' \textbf{X}$. 

\subsection{General results}
The main focus of this work is to study the use of $ \kappa_{\textbf{X}} $ and $ \psi_{\textbf{X}} $ as projection indices in the projection pursuit of matrix-valued data. As our first result, we establish that the optimization of the two indices is indeed a well-defined procedure in the sense that a minimizing/maximizing direction always exists. Note that we include both minimization and maximization as, analogous to standard projection pursuit with kurtosis, the choice of the optimization direction (minimization/maximization) affects what kind of structures we can find. The usual heuristic is that minimization of kurtosis finds clusters of roughly equal proportions and maximization finds outliers,  see Theorems \ref{theo:matrix_lda_optimal_higher_rank} and \ref{theo:matrix_lda_optimal_mode_wise} later in Section \ref{sec:lda}.

\begin{lemma}\label{lem:existence}
	Let $ \textbf{X} $ be a $ p \times q $ random matrix having finite fourth moments and assume that $\mathrm{E} \left( [ \textbf{u}' \{ \textbf{X} - \mathrm{E}(\textbf{X}) \} \textbf{v} ]^2 \right) > 0$ for all $(\textbf{u}, \textbf{v}) \in \mathcal{U}_0$. Then,
	\begin{itemize}
		\item[i)] there exist both a pair $ (\textbf{u}_0, \textbf{v}_0) $ that minimizes $ \kappa_{\textbf{X}}(\textbf{u}, \textbf{v}) $ and a pair $ (\textbf{u}_1, \textbf{v}_1) $ that maximizes $ \kappa_{\textbf{X}}(\textbf{u}, \textbf{v}) $ in $ \mathcal{U}_0 $,
		\item[ii)] there exist both a direction $ \textbf{u}_0 $ that minimizes $ \psi_{\textbf{X}}(\textbf{u}) $ and a direction $ \textbf{u}_1 $ that maximizes $ \psi_{\textbf{X}}(\textbf{u}) $ in $ \mathbb{S}^{p - 1} $.
	\end{itemize}
\end{lemma}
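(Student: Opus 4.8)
The plan is to reduce both claims to the extreme value theorem: in each case the index is a continuous real-valued function on a compact set, so its extrema are attained. The domains $\mathcal{U}_0 = \mathbb{S}^{p-1} \times \mathbb{S}^{q-1}$ and $\mathbb{S}^{p-1}$ are compact, being (products of) spheres in finite-dimensional Euclidean space, so the substance of the argument is continuity of the indices.

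For part i), I would write $\kappa_{\textbf{X}}(\textbf{u},\textbf{v}) = N(\textbf{u},\textbf{v})/D(\textbf{u},\textbf{v})$ with numerator $N(\textbf{u},\textbf{v}) = \mathrm{E}([\textbf{u}'\tilde{\textbf{X}}\textbf{v}]^4)$ and denominator $D(\textbf{u},\textbf{v}) = \{\mathrm{E}([\textbf{u}'\tilde{\textbf{X}}\textbf{v}]^2)\}^2$, where $\tilde{\textbf{X}} = \textbf{X} - \mathrm{E}(\textbf{X})$. Expanding $\textbf{u}'\tilde{\textbf{X}}\textbf{v} = \sum_{i,j} u_i \tilde{X}_{ij} v_j$ and raising it to the relevant power shows that $N$ and $D$ are polynomials in the entries of $(\textbf{u},\textbf{v})$ whose coefficients are mixed moments of $\tilde{\textbf{X}}$ of order at most four; finiteness of the fourth moments then makes $N$ and $D$ continuous on all of $\mathbb{R}^p \times \mathbb{R}^q$, in particular on $\mathcal{U}_0$. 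The standing assumption $\mathrm{E}([\textbf{u}'\tilde{\textbf{X}}\textbf{v}]^2) > 0$ for all $(\textbf{u},\textbf{v}) \in \mathcal{U}_0$ ensures $D$ is strictly positive there, so $\kappa_{\textbf{X}}$ is continuous on the compact set $\mathcal{U}_0$, and the extreme value theorem delivers the minimizing pair $(\textbf{u}_0,\textbf{v}_0)$ and the maximizing pair $(\textbf{u}_1,\textbf{v}_1)$.

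For part ii), I would first record that the matrix $\textbf{M}(\textbf{u}) := \mathrm{E}(\tilde{\textbf{X}}'\textbf{u}\textbf{u}'\tilde{\textbf{X}})$ obeys $\textbf{v}'\textbf{M}(\textbf{u})\textbf{v} = \mathrm{E}([\textbf{u}'\tilde{\textbf{X}}\textbf{v}]^2)$, so the assumption forces $\textbf{M}(\textbf{u})$ to be positive definite, hence invertible, for every $\textbf{u} \in \mathbb{S}^{p-1}$; this simultaneously shows that $\psi_{\textbf{X}}$ is well-defined on $\mathbb{S}^{p-1}$ and exhibits the claimed equivalence of the two non-degeneracy conditions. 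The entries of $\textbf{M}(\textbf{u})$ are quadratic polynomials in $\textbf{u}$ with finite coefficients, so $\textbf{u} \mapsto \textbf{M}(\textbf{u})$ is continuous, and continuity of matrix inversion on the open set of invertible matrices (via Cramer's rule) makes $\textbf{u} \mapsto \textbf{M}(\textbf{u})^{-1}$ continuous on $\mathbb{S}^{p-1}$. To handle the outer expectation I would set $\textbf{y} := \tilde{\textbf{X}}'\textbf{u}$ and $\textbf{A} := \textbf{M}(\textbf{u})^{-1}$ and expand $\psi_{\textbf{X}}(\textbf{u}) = \mathrm{E}[(\textbf{y}'\textbf{A}\textbf{y})^2] = \sum_{i,j,k,l} A_{ij} A_{kl}\, \mathrm{E}(y_i y_j y_k y_l)$, noting that each $\mathrm{E}(y_i y_j y_k y_l)$ is a degree-four polynomial in $\textbf{u}$ with finite coefficients and each $A_{ij}$ is continuous; hence $\psi_{\textbf{X}}$ is a finite sum of products of continuous functions, so it is continuous on the compact set $\mathbb{S}^{p-1}$, and the extreme value theorem again yields the minimizer $\textbf{u}_0$ and the maximizer $\textbf{u}_1$.

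The one step needing genuine care is the continuity of $\psi_{\textbf{X}}$, since there $\textbf{u}$ appears both inside and outside the expectation through $\textbf{M}(\textbf{u})^{-1}$; the expansion into a finite algebraic combination of polynomial moments and entries of $\textbf{M}(\textbf{u})^{-1}$ circumvents any dominated-convergence subtleties, and the remainder of the argument is a routine appeal to compactness.
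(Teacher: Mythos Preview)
Your proof is correct and follows essentially the same approach as the paper: both arguments reduce to the extreme value theorem on the compact domains $\mathcal{U}_0$ and $\mathbb{S}^{p-1}$, verifying continuity of $\kappa_{\textbf{X}}$ via the nonvanishing denominator and continuity of $\psi_{\textbf{X}}$ via the positive definiteness (hence invertibility) of $\textbf{M}(\textbf{u})$ deduced from the standing assumption. Your version is more explicit---expanding the moments as polynomials in $(\textbf{u},\textbf{v})$ and spelling out the finite-sum decomposition of $\psi_{\textbf{X}}$---whereas the paper simply asserts continuity and moves on, but the underlying ideas are identical.
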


Having established existence in Lemma \ref{lem:existence}, we note that the uniqueness of an optimizer is unobtainable in the case of general $ \textbf{X} $. This follows instantly by considering any $ \textbf{X} $ which is \textit{spherical} in the sense that $ \textbf{U} \textbf{X} \textbf{V}' \sim \textbf{X} $ for all orthogonal matrices $ \textbf{U} \in \mathbb{R}^{p \times p} $ and $ \textbf{V} \in \mathbb{R}^{q \times q} $, and noting that for such $ \textbf{X} $ all projections $\textbf{u}' \textbf{X} \textbf{v}$ have identical distributions. See \cite{gupta2018matrix} for examples of spherical matrix distributions. However, uniqueness of the optimizer can naturally be established for some particular families of $ \textbf{X} $, see Section \ref{sec:lda} for an example. Finally, alternative forms for the condition required in Lemma \ref{lem:existence} are discussed in Appendix \ref{sec:well_defined}.


In the next section we will investigate the theoretical properties of the projection indices $ \kappa_{\textbf{X}} $ and $ \psi_{\textbf{X}} $ under the matrix normal distribution mixture model. 


\section{Group separation with matrix projection pursuit}\label{sec:lda}

\subsection{Optimal projections under matrix normal mixture}

Throughout Section \ref{sec:lda}, we assume that the $ p \times q $ random matrix $ \textbf{X} $ obeys a mixture of matrix normal distributions,
\begin{align}\label{eq:matrix_normal_mixture}
\textbf{X} \sim \alpha_1 \mathcal{N}_{p \times q}(\textbf{T}_1, \textbf{A}, \textbf{B}) +  \alpha_2 \mathcal{N}_{p \times q}(\textbf{T}_2, \textbf{A}, \textbf{B}),
\end{align}
where $ \textbf{A} \in \mathbb{R}^{p \times p} $ and $ \textbf{B} \in \mathbb{R}^{q \times q} $ are positive definite, $ \alpha_1 + \alpha_2 = 1 $ and the mean matrices $ \textbf{T}_1 \in \mathbb{R}^{p \times q} $, $ \textbf{T}_2 \in \mathbb{R}^{p \times q} $ are not equal. The matrix normal distribution $ \mathcal{N}_{p \times q}(\textbf{T}, \textbf{A}, \textbf{B}) $ is defined to be the distribution of the random matrix $ \textbf{T} + \textbf{A}^{1/2} \textbf{Z} \textbf{B}^{1/2} $, where the elements of the $ p \times q $ random matrix $ \textbf{Z} $ are i.i.d. standard normal, see \cite{gupta2018matrix}. As discussed in the introduction, this is a typical context to apply projection pursuit (for vectorial data), with the objective of finding a low-dimensional projection that separates the two components of the mixture. To quantify our target, we begin by deriving an expression for the optimal projection direction (in the sense of LDA) for separating the components of the mixture \eqref{eq:matrix_normal_mixture}. 
The following lemma gives closed-form expression for the optimal linear discriminant projection under model~\eqref{eq:matrix_normal_mixture}.

\begin{lemma}\label{lem:matrix_lda_optimal}
	Under model \eqref{eq:matrix_normal_mixture}, the optimal projection for separating the parts of the mixture in the sense of LDA is
	\[
	\langle \textbf{W}_{\mathrm{LDA}}, \textbf{X} \rangle,
	\]
	where the projection direction is
	\[
	\textbf{W}_{\mathrm{LDA}} := \textbf{A}^{-1} (\textbf{T}_2 - \textbf{T}_1) \textbf{B}^{-1}.
	\]	
\end{lemma}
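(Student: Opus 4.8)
The plan is to reduce the matrix LDA problem to the classical vectorial LDA problem via vectorization, and then apply the well-known Fisher discriminant formula. First I would recall that LDA for separating two Gaussian populations with common covariance $\boldsymbol\Sigma$ and means $\boldsymbol\mu_1, \boldsymbol\mu_2$ classifies a new observation by comparing the two log-densities; the resulting decision boundary is linear and the optimal one-dimensional projection direction is $\boldsymbol\Sigma^{-1}(\boldsymbol\mu_2 - \boldsymbol\mu_1)$ (up to scaling). This is the standard fact I would invoke, referring either to a textbook or to the density-comparison characterization mentioned before the lemma.

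Next I would translate the matrix normal mixture \eqref{eq:matrix_normal_mixture} into vector form. The key structural fact about the matrix normal distribution $\mathcal{N}_{p\times q}(\textbf{T}, \textbf{A}, \textbf{B})$ is that $\mathrm{vec}(\textbf{X})$ is multivariate normal with mean $\mathrm{vec}(\textbf{T})$ and covariance $\textbf{B} \otimes \textbf{A}$ (using the column-stacking convention for $\mathrm{vec}$ adopted in the introduction). Hence $\mathrm{vec}(\textbf{X})$ follows the vectorial two-group Gaussian mixture $\alpha_1 \mathcal{N}_{pq}(\mathrm{vec}(\textbf{T}_1), \textbf{B}\otimes\textbf{A}) + \alpha_2 \mathcal{N}_{pq}(\mathrm{vec}(\textbf{T}_2), \textbf{B}\otimes\textbf{A})$, which has a common covariance matrix across the two groups. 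Applying the classical LDA formula, the optimal projection direction in vectorized coordinates is
\[
\textbf{w}_{\mathrm{LDA}} = (\textbf{B}\otimes\textbf{A})^{-1}\bigl(\mathrm{vec}(\textbf{T}_2) - \mathrm{vec}(\textbf{T}_1)\bigr) = (\textbf{B}^{-1}\otimes\textbf{A}^{-1})\,\mathrm{vec}(\textbf{T}_2 - \textbf{T}_1).
\]

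Finally I would convert this back to matrix form. Using the identity $(\textbf{B}^{-1}\otimes\textbf{A}^{-1})\mathrm{vec}(\textbf{M}) = \mathrm{vec}(\textbf{A}^{-1}\textbf{M}\textbf{B}^{-1})$, we get $\textbf{w}_{\mathrm{LDA}} = \mathrm{vec}(\textbf{A}^{-1}(\textbf{T}_2-\textbf{T}_1)\textbf{B}^{-1})$, so that the vectorial projection $\textbf{w}_{\mathrm{LDA}}'\mathrm{vec}(\textbf{X})$ equals $\langle \textbf{W}_{\mathrm{LDA}}, \textbf{X}\rangle$ with $\textbf{W}_{\mathrm{LDA}} = \textbf{A}^{-1}(\textbf{T}_2 - \textbf{T}_1)\textbf{B}^{-1}$, using the correspondence $\textbf{w}'\mathrm{vec}(\textbf{X}) = \langle\textbf{W},\textbf{X}\rangle$ noted in the introduction. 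I expect the only mild subtlety — and the main thing to get right rather than a genuine obstacle — is bookkeeping around the $\mathrm{vec}$ convention and the Kronecker ordering ($\textbf{B}\otimes\textbf{A}$ versus $\textbf{A}\otimes\textbf{B}$), since these differ across references; the invertibility of $\textbf{B}\otimes\textbf{A}$ is immediate from the positive definiteness of $\textbf{A}$ and $\textbf{B}$, and the positivity of the mixing weights plays no role in the direction of the discriminant (it only shifts the intercept), so the argument is otherwise routine.
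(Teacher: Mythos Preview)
Your argument is correct, but it takes a different route from the paper's own proof. The paper works directly with the matrix normal density
\[
f_{\textbf{T},\textbf{A},\textbf{B}}(\textbf{X}) \propto \exp\left\{-\tfrac{1}{2}\mathrm{tr}\bigl[\textbf{B}^{-1}(\textbf{X}-\textbf{T})'\textbf{A}^{-1}(\textbf{X}-\textbf{T})\bigr]\right\}
\]
and expands the inequality $f_{\textbf{T}_2,\textbf{A},\textbf{B}}(\textbf{X}) > f_{\textbf{T}_1,\textbf{A},\textbf{B}}(\textbf{X})$ by hand; after cancelling the common quadratic term this reduces to a threshold on $\mathrm{tr}\bigl[\textbf{A}^{-1}(\textbf{T}_2-\textbf{T}_1)\textbf{B}^{-1}\textbf{X}'\bigr] = \langle \textbf{W}_{\mathrm{LDA}},\textbf{X}\rangle$. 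Your approach instead vectorizes, invokes the classical Fisher discriminant $\boldsymbol\Sigma^{-1}(\boldsymbol\mu_2-\boldsymbol\mu_1)$ as a black box, and then un-vectorizes via $(\textbf{B}^{-1}\otimes\textbf{A}^{-1})\mathrm{vec}(\textbf{M}) = \mathrm{vec}(\textbf{A}^{-1}\textbf{M}\textbf{B}^{-1})$. The paper's computation is self-contained and, as a by-product, makes explicit that the full Bayes rule (not just the direction) depends on $\textbf{X}$ only through this projection; your reduction is shorter and cleanly isolates the one nontrivial ingredient (the Kronecker covariance structure $\textbf{B}\otimes\textbf{A}$), at the cost of citing the vector LDA result rather than deriving it. Either is perfectly acceptable here.
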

Inspection of the proof of Lemma~\ref{lem:matrix_lda_optimal} also reveals that the optimal Bayes classifier depends on the data $\textbf{X}$ only through this particular projection $\langle \textbf{W}_{\mathrm{LDA}}, \textbf{X} \rangle$.

We still establish some additional notation. Turns out that in each case, the correct optimization direction (minimization/maximization) is fully determined by the value of the mixing proportion $ \alpha_1 $. If $ \alpha_1 \in \mathcal{A}_{\mathrm{min}} := \{ \alpha \in (0, 1) \mid | \alpha - 1/2 | < 1/\sqrt{12} \} $, then one should minimize, and if $ \alpha_1 \in \mathcal{A}_{\mathrm{max}} := \{ \alpha \in (0, 1) \mid | \alpha - 1/2 | > 1/\sqrt{12} \} $, then one should maximize the corresponding objective function. In the edge case, $ \alpha_1 \in \mathcal{A}_{\mathrm{0}} := \{ \alpha \in (0, 1) \mid | \alpha - 1/2 | = 1/\sqrt{12} \} $, the kurtosis of the projection does not depend on the projection direction and in this case projection pursuit (with kurtosis as the index) carries no information on the group separation. Note that what makes the values $ \alpha_0 = 1/2 \pm 1/\sqrt{12} $ special, is that any univariate normal mixture $ \alpha_0 \mathcal{N}(-\mu, \sigma^2) + (1 - \alpha_0) \mathcal{N}(\mu, \sigma^2) $ has kurtosis equal to that of standard normal distribution, making it indistinguishable from noise w.r.t. kurtosis.
The dependency of the form of optimization on $ \alpha_1 $ naturally means that, in order to choose the correct optimization direction in practice, one has to know the (generally unknown) mixing proportion. However, this is not an issue in practice as one can simply estimate both a minimizing and maximizing  solution and use, e.g., the scatter plot between the respective projections to identify the optimal direction. More discussion on whether to maximize or to minimize is given in Section~\ref{sec:discussion} and an alternative, fully blind approach is described later in this section in Corollary~\ref{cor:alternative_1}. Finally, as the borderline case $ \alpha_0 $, while arguably a rare occurrence, can sometimes be a nuisance in practice, we discuss in Section \ref{sec:discussion} a way to get around it.

\subsection{Optimization of $ \kappa_{\textbf{X}} $}\label{subsec:kappa}

We begin by considering the optimization of the projection index $ \kappa_{\textbf{X}} $ involving both projection directions $ \textbf{u} $ and $ \textbf{v} $ simultaneously. Recall that the dual projection $ \textbf{u}' \textbf{X} \textbf{v} $ can be written as a rank-1 projection $ \langle \textbf{u} \textbf{v}', \textbf{X} \rangle $ and that, based on Lemma \ref{lem:matrix_lda_optimal}, the optimal projection has the $ \mathrm{rank}(	\textbf{W}_{\mathrm{LDA}} ) = \mathrm{rank}(\textbf{T}_2 - \textbf{T}_1) =: d $. This implies that, in order to recover $ \textbf{W}_{\mathrm{LDA}} $ through matrix projection pursuit, we need to extract at least $ d $ pairs of directions just to account for the degrees of freedom.  

The next result shows that extracting $ d $ pairs is, besides necessary, also sufficient for the projection index $ \kappa_{\textbf{X}} $ to reconstruct $ \textbf{W}_{\mathrm{LDA}} $. Unlike in standard projection pursuit, we do not enforce orthogonality of the successive optimizers in the usual sense, but w.r.t. a set of specific matrices that depend on the previous optimizers (inspection of the proof of the following theorem reveals that requiring regular orthogonality would not allow the estimation of $ \textbf{W}_{\mathrm{LDA}} $). Namely, for a fixed collection $ (\textbf{u}_1, \textbf{v}_1), \ldots , (\textbf{u}_d, \textbf{v}_d) \in \mathcal{U}_0 $, we let $\mathcal{G}_{1,\textbf{X}} = \{ \textbf{G}_{11,\textbf{X}}, \ldots , \textbf{G}_{1(d-1),\textbf{X}} \}$ and $\mathcal{G}_{2,\textbf{X}} = \{ \textbf{G}_{21,\textbf{X}}, \ldots , \textbf{G}_{2(d-1),\textbf{X}} \} $, where
\[ 
	\textbf{G}_{1k,\textbf{X}} := \mathrm{E} \left[ \{ \textbf{X} - \mathrm{E}(\textbf{X}) \} \textbf{v}_k \textbf{v}_k'  \{ \textbf{X} - \mathrm{E}(\textbf{X}) \}' \right],
	\]
	and
	\[ 
	\textbf{G}_{2k,\textbf{X}} := \mathrm{E} \left[ \{ \textbf{X} - \mathrm{E}(\textbf{X})' \} \textbf{u}_k \textbf{u}_k'  \{ \textbf{X} - \mathrm{E}(\textbf{X}) \} \right],
	\]
for $ k = 1, \ldots , d-1$ (recall Section \ref{sec:notation} for the definition of a sequence of optimizers under the orthogonality constraints given by $\mathcal{G}_{1,\textbf{X}}$ and $\mathcal{G}_{2,\textbf{X}}$). Additionally, we denote
\[ 
\lambda_j := \sqrt{ \max \left\{ \frac{\theta_j}{1 - \alpha_1 \alpha_2 \theta_j}, 0 \right\} }, \quad \mbox{where} \quad \theta_j := \sqrt{  \max \left\{ \frac{\kappa_{\textbf{X}}(\textbf{u}_j, \textbf{v}_j) - 3}{\alpha_1 \alpha_2 (1 - 6 \alpha_1 \alpha_2)}, 0 \right\} }.
\]
Note that the thresholding in $\lambda_j$ and $\theta_j$ is unnecessary on the population level (the quantities involved are non-negative under the model). However, we include it to make sure that the square roots are well-defined also in the sample version of the method.

\begin{theorem}\label{theo:matrix_lda_optimal_higher_rank}
	Assume that model \eqref{eq:matrix_normal_mixture} holds. Then,
	\begin{itemize}
		\item[i)] if $ \alpha_1 \in \mathcal{A}_{\mathrm{min}}\setminus\{\frac{1}{2}\} $, then any sequence of $(\mathcal{G}_{1,\textbf{X}}, \mathcal{G}_{2,\textbf{X}})$-minimizers of $ \kappa_{\textbf{X}}$ satisfies
		\[
		\sum_{j=1}^d \frac{s_j \lambda_j \sqrt{1 + \alpha_1 \alpha_2 \lambda_j^2} }{\sqrt{\mathrm{E} ( [ \textbf{u}_j' \{ \textbf{X} - \mathrm{E}(\textbf{X})  \} \textbf{v}_j  ]^2 )}} \textbf{u}_j \textbf{v}_j'  = \textbf{W}_{\mathrm{LDA}},
		\]
		where $ s_1, \ldots, s_d \in \{ -1, 1 \}$ are the signs of the quantities
		\[ 
		(\alpha_1 - \alpha_2)^{-1} \mathrm{E} ( [ \textbf{u}_j' \{ \textbf{X} - \mathrm{E}(\textbf{X})  \} \textbf{v}_j  ]^3 ).
		\]
		\item[ii)] if $ \alpha_1 \in \mathcal{A}_{0} $, then $ \kappa_{\textbf{X}}(\textbf{u}, \textbf{v}) = 3 $, regardless of $ \textbf{u}, \textbf{v} $.
		\item[iii)] if $  \alpha_1 \in \mathcal{A}_{\mathrm{max}} $, then any sequence of $(\mathcal{G}_{1,\textbf{X}}, \mathcal{G}_{2,\textbf{X}})$-maximizers of $ \kappa_{\textbf{X}} $ satisfies
		\[
		\sum_{j=1}^d  \frac{s_j \lambda_j \sqrt{1 + \alpha_1 \alpha_2 \lambda_j^2} }{\sqrt{\mathrm{E} ( [ \textbf{u}_j' \{ \textbf{X} - \mathrm{E}(\textbf{X})  \} \textbf{v}_j  ]^2 )}} \textbf{u}_j \textbf{v}_j'  = \textbf{W}_{\mathrm{LDA}},
		\]
		where $ s_1, \ldots, s_d \in \{ -1, 1 \}$ are the signs of the quantities
		\[ 
		(\alpha_1 - \alpha_2)^{-1} \mathrm{E} ( [ \textbf{u}_j' \{ \textbf{X} - \mathrm{E}(\textbf{X})  \} \textbf{v}_j  ]^3 ).
		\]
	\end{itemize}
\end{theorem}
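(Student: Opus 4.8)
The plan is to reduce the problem, in two moves, to a singular value decomposition of a single fixed matrix.

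\textit{Step 1 (reduction to a univariate normal mixture).} Write $\textbf{D} := \textbf{T}_2 - \textbf{T}_1$, so that $\textbf{W}_{\mathrm{LDA}} = \textbf{A}^{-1}\textbf{D}\textbf{B}^{-1}$ and $d = \mathrm{rank}(\textbf{D})$. Under \eqref{eq:matrix_normal_mixture} we may write $\tilde{\textbf{X}} = C\,\textbf{D} + \textbf{A}^{1/2}\textbf{Z}\textbf{B}^{1/2}$, where $\textbf{Z}$ has i.i.d.\ standard normal entries, $C$ is independent of $\textbf{Z}$ and takes the values $-\alpha_2,\alpha_1$ with probabilities $\alpha_1,\alpha_2$; thus $\mathrm{E}(C) = 0$, $\mathrm{E}(C^2) = \alpha_1\alpha_2$, $\mathrm{E}(C^3) = \alpha_1\alpha_2(\alpha_1 - \alpha_2)$ and $\mathrm{E}(C^4) = \alpha_1\alpha_2(1 - 3\alpha_1\alpha_2)$. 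Hence for any $(\textbf{u},\textbf{v}) \in \mathcal{U}_0$ the projection satisfies $\textbf{u}'\tilde{\textbf{X}}\textbf{v} = C\delta + W$ with $\delta := \textbf{u}'\textbf{D}\textbf{v}$, $W \sim \mathcal{N}(0,\sigma^2)$ independent of $C$, and $\sigma^2 := (\textbf{u}'\textbf{A}\textbf{u})(\textbf{v}'\textbf{B}\textbf{v})$, and a routine fourth-moment computation gives
\[
\kappa_{\textbf{X}}(\textbf{u},\textbf{v}) = 3 + \frac{1 - 6\alpha_1\alpha_2}{\alpha_1\alpha_2}\left(\frac{\alpha_1\alpha_2\,\delta^2}{\sigma^2 + \alpha_1\alpha_2\,\delta^2}\right)^{2}.
\]
Since $\alpha_1\alpha_2 = 1/6$ iff $\alpha_1 \in \mathcal{A}_0$, this proves (ii); and $\mathrm{E}([\textbf{u}'\tilde{\textbf{X}}\textbf{v}]^2) = \sigma^2 + \alpha_1\alpha_2\delta^2 \ge \sigma^2 > 0$, so Lemma~\ref{lem:existence} guarantees that optimizers exist. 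The quantity $R(\textbf{u},\textbf{v}) := \delta^2/\sigma^2 = [\textbf{u}'\textbf{D}\textbf{v}]^2/[(\textbf{u}'\textbf{A}\textbf{u})(\textbf{v}'\textbf{B}\textbf{v})]$ enters $\kappa_{\textbf{X}}$ only through the strictly increasing map $t \mapsto (\alpha_1\alpha_2 t/(1 + \alpha_1\alpha_2 t))^2$; consequently, pointwise on $\mathcal{U}_0$, $\kappa_{\textbf{X}}$ is a strictly decreasing function of $R$ when $\alpha_1 \in \mathcal{A}_{\mathrm{min}}$ and a strictly increasing function of $R$ when $\alpha_1 \in \mathcal{A}_{\mathrm{max}}$. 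As the stage-$j$ constraints depend only on the earlier optimizers, which are the same in both problems, every sequence of $(\mathcal{G}_{1,\textbf{X}},\mathcal{G}_{2,\textbf{X}})$-minimizers (resp.\ maximizers) of $\kappa_{\textbf{X}}$ in cases (i) and (iii) is exactly a sequence of $(\mathcal{G}_{1,\textbf{X}},\mathcal{G}_{2,\textbf{X}})$-\emph{maximizers} of $R$, and it is this latter object we analyze.

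\textit{Step 2 (whitening and the orthogonality constraints).} Put $\textbf{a}_j := \textbf{A}^{1/2}\textbf{u}_j$, $\textbf{b}_j := \textbf{B}^{1/2}\textbf{v}_j$, $\hat{\textbf{a}}_j := \textbf{a}_j/\|\textbf{a}_j\|$, $\hat{\textbf{b}}_j := \textbf{b}_j/\|\textbf{b}_j\|$ and $\textbf{M} := \textbf{A}^{-1/2}\textbf{D}\textbf{B}^{-1/2}$, so that $R(\textbf{u}_j,\textbf{v}_j) = (\hat{\textbf{a}}_j'\textbf{M}\hat{\textbf{b}}_j)^2$ and $\mathrm{rank}(\textbf{M}) = d$. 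Using $\tilde{\textbf{X}} = C\textbf{D} + \textbf{A}^{1/2}\textbf{Z}\textbf{B}^{1/2}$ and the identity $\mathrm{E}(\textbf{Z}\textbf{N}\textbf{Z}') = \mathrm{tr}(\textbf{N})\textbf{I}_p$ one computes
\[
\textbf{G}_{1k,\textbf{X}} = \alpha_1\alpha_2\,\textbf{D}\textbf{v}_k\textbf{v}_k'\textbf{D}' + (\textbf{v}_k'\textbf{B}\textbf{v}_k)\,\textbf{A}, \qquad \textbf{G}_{2k,\textbf{X}} = \alpha_1\alpha_2\,\textbf{D}'\textbf{u}_k\textbf{u}_k'\textbf{D} + (\textbf{u}_k'\textbf{A}\textbf{u}_k)\,\textbf{B}.
\]
Substituting and whitening, the constraint $\textbf{u}_j'\textbf{G}_{1k,\textbf{X}}\textbf{u}_k = 0$ becomes $\hat{\textbf{a}}_j'\textbf{w}_k = 0$, where $\textbf{w}_k := \alpha_1\alpha_2(\textbf{b}_k'\textbf{M}'\textbf{a}_k)\textbf{M}\textbf{b}_k + \|\textbf{b}_k\|^2\textbf{a}_k$. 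The key observation is that, since $(\hat{\textbf{a}}_k,\hat{\textbf{b}}_k)$ maximizes $R$ under the stage-$k$ constraints, holding $\hat{\textbf{b}}_k$ fixed and maximizing in $\hat{\textbf{a}}_k$ over the corresponding subspace forces $\textbf{M}\hat{\textbf{b}}_k = (\hat{\textbf{a}}_k'\textbf{M}\hat{\textbf{b}}_k)\hat{\textbf{a}}_k + \textbf{r}_k$ with $\textbf{r}_k \in \mathrm{span}(\hat{\textbf{a}}_1,\dots,\hat{\textbf{a}}_{k-1})$; inserting this shows $\textbf{w}_k \in \mathrm{span}(\hat{\textbf{a}}_1,\dots,\hat{\textbf{a}}_k)$ with a strictly positive coefficient $\alpha_1\alpha_2(\hat{\textbf{a}}_k'\textbf{M}\hat{\textbf{b}}_k)^2 + 1$ of $\hat{\textbf{a}}_k$. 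By induction, the family of constraints $\{\textbf{u}_j'\textbf{G}_{1k,\textbf{X}}\textbf{u}_k = 0 : k < j\}$ is equivalent to ordinary orthogonality $\hat{\textbf{a}}_j \perp \hat{\textbf{a}}_1,\dots,\hat{\textbf{a}}_{j-1}$, and symmetrically $\hat{\textbf{b}}_j \perp \hat{\textbf{b}}_1,\dots,\hat{\textbf{b}}_{j-1}$.

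\textit{Step 3 (greedy recovery of the SVD of $\textbf{M}$).} By Steps 1--2 it remains to show that any sequence of unit pairs $(\hat{\textbf{a}}_1,\hat{\textbf{b}}_1),\dots,(\hat{\textbf{a}}_d,\hat{\textbf{b}}_d)$ for which $(\hat{\textbf{a}}_1,\hat{\textbf{b}}_1)$ maximizes $(\hat{\textbf{a}}'\textbf{M}\hat{\textbf{b}})^2$ and each $(\hat{\textbf{a}}_j,\hat{\textbf{b}}_j)$ maximizes it subject to $\hat{\textbf{a}}_j \perp \hat{\textbf{a}}_1,\dots,\hat{\textbf{a}}_{j-1}$, $\hat{\textbf{b}}_j \perp \hat{\textbf{b}}_1,\dots,\hat{\textbf{b}}_{j-1}$, satisfies $\sum_{j=1}^d (\hat{\textbf{a}}_j'\textbf{M}\hat{\textbf{b}}_j)\hat{\textbf{a}}_j\hat{\textbf{b}}_j' = \textbf{M}$. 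This is the standard greedy/deflation description of the SVD: an induction on $j$, using the linear-functional optimality from Step~2 together with a minimax comparison of the stagewise optima with the singular values $\varrho_1 \ge \dots \ge \varrho_d > 0$ of $\textbf{M}$, shows that $(\hat{\textbf{a}}_j,\hat{\textbf{b}}_j)$ is a matched singular pair of $\textbf{M}$ for $\varrho_j$, that $\hat{\textbf{a}}_j'\textbf{M}\hat{\textbf{b}}_j = s_j\varrho_j$ for some $s_j \in \{-1,1\}$ — only the pair, and only up to this common sign, is determined, which is what the signs $s_j$ in the statement account for — and that the deflated matrix vanishes after $d$ steps. This step is the one genuine obstacle of the proof: one must reconcile the nonstandard $(\mathcal{G}_{1,\textbf{X}},\mathcal{G}_{2,\textbf{X}})$-orthogonality with plain orthogonality of the whitened optimizers (hence the need to exploit the \emph{optimality}, not merely the \emph{definition}, of the earlier pairs), and then push the SVD argument through with possibly repeated singular values so that $\sum_j (\hat{\textbf{a}}_j'\textbf{M}\hat{\textbf{b}}_j)\hat{\textbf{a}}_j\hat{\textbf{b}}_j'$ is independent of the choices made within each repeated-value block, exactly in the way $\sum_j \lambda_j\textbf{w}_j\textbf{w}_j'$ reconstructs a covariance matrix in PCA. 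Undoing the whitening in $\sum_j (\hat{\textbf{a}}_j'\textbf{M}\hat{\textbf{b}}_j)\hat{\textbf{a}}_j\hat{\textbf{b}}_j' = \textbf{M}$ yields
\[
\sum_{j=1}^d \frac{\textbf{u}_j'\textbf{D}\textbf{v}_j}{(\textbf{u}_j'\textbf{A}\textbf{u}_j)(\textbf{v}_j'\textbf{B}\textbf{v}_j)}\,\textbf{u}_j\textbf{v}_j' = \textbf{A}^{-1}\textbf{D}\textbf{B}^{-1} = \textbf{W}_{\mathrm{LDA}}.
\]

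\textit{Step 4 (matching the constants).} It remains to identify the coefficients in the last display with those in the statement (the thresholds in $\theta_j,\lambda_j$ being inactive under the model). From the formula of Step~1, $\theta_j = \delta_j^2/(\sigma_j^2 + \alpha_1\alpha_2\delta_j^2) = \delta_j^2/\mathrm{E}([\textbf{u}_j'\tilde{\textbf{X}}\textbf{v}_j]^2)$, so $1 - \alpha_1\alpha_2\theta_j = \sigma_j^2/\mathrm{E}([\textbf{u}_j'\tilde{\textbf{X}}\textbf{v}_j]^2)$, $\lambda_j = |\delta_j|/\sigma_j$ and $\sqrt{1 + \alpha_1\alpha_2\lambda_j^2} = \sqrt{\mathrm{E}([\textbf{u}_j'\tilde{\textbf{X}}\textbf{v}_j]^2)}/\sigma_j$; hence the $j$-th coefficient in the statement reduces to $s_j|\delta_j|/\sigma_j^2$. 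Moreover $\textbf{u}_j'\tilde{\textbf{X}}\textbf{v}_j = C\delta_j + W$ gives $\mathrm{E}([\textbf{u}_j'\tilde{\textbf{X}}\textbf{v}_j]^3) = \delta_j^3\,\mathrm{E}(C^3) = \alpha_1\alpha_2(\alpha_1 - \alpha_2)\delta_j^3$, so $(\alpha_1 - \alpha_2)^{-1}\mathrm{E}([\textbf{u}_j'\tilde{\textbf{X}}\textbf{v}_j]^3)$ has the sign of $\delta_j$; this is exactly where the hypothesis $\alpha_1 \ne 1/2$ in (i) is used, while in (iii) it holds automatically. Since the stage-$j$ optimum of $R$ is $\varrho_j^2 > 0$ for $j \le d$, we have $\delta_j \ne 0$ and $s_j = \mathrm{sign}(\delta_j) = \mathrm{sign}(\hat{\textbf{a}}_j'\textbf{M}\hat{\textbf{b}}_j)$, so $s_j|\delta_j|/\sigma_j^2 = \delta_j/\sigma_j^2 = \textbf{u}_j'\textbf{D}\textbf{v}_j/[(\textbf{u}_j'\textbf{A}\textbf{u}_j)(\textbf{v}_j'\textbf{B}\textbf{v}_j)]$. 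Thus the sum in the statement equals $\textbf{W}_{\mathrm{LDA}}$, establishing (i) and (iii).
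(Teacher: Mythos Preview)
Your proof is correct and follows essentially the same route as the paper: reduce $\kappa_{\textbf{X}}$ to a monotone function of the Rayleigh-type quotient $R(\textbf{u},\textbf{v}) = (\textbf{u}'\textbf{D}\textbf{v})^2/[(\textbf{u}'\textbf{A}\textbf{u})(\textbf{v}'\textbf{B}\textbf{v})]$, whiten, show that the $(\mathcal{G}_{1,\textbf{X}},\mathcal{G}_{2,\textbf{X}})$-constraints become plain orthogonality of the whitened vectors, invoke the sequential variational characterization of the SVD of $\textbf{M} = \textbf{A}^{-1/2}\textbf{D}\textbf{B}^{-1/2}$, and then back out the explicit coefficients and signs.

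The one genuine difference is the order of the middle two moves. The paper first identifies $(\textbf{u}_1,\textbf{v}_1)$ with a singular pair of $\textbf{M}$ and then plugs this explicit form into $\textbf{G}_{11,\textbf{X}}\textbf{u}_1$ to show the constraint collapses to $\textbf{u}_2'\textbf{A}^{1/2}\textbf{u}_{01} = 0$; you instead use only the \emph{first-order optimality} of $(\hat{\textbf{a}}_k,\hat{\textbf{b}}_k)$ at stage $k$ to show $\textbf{M}\hat{\textbf{b}}_k \in \mathrm{span}(\hat{\textbf{a}}_1,\dots,\hat{\textbf{a}}_k)$ and hence $\textbf{w}_k \in \mathrm{span}(\hat{\textbf{a}}_1,\dots,\hat{\textbf{a}}_k)$ with nonzero $\hat{\textbf{a}}_k$-coefficient, and run an induction before any SVD identification. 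This decoupling is clean and slightly more robust (it would survive, e.g., situations where the sequential optimizers are not yet known in closed form), whereas the paper's version is a touch more direct once the first pair is in hand. Your Step~4 bookkeeping for $\theta_j,\lambda_j$ and the sign via $\mathrm{E}(C^3) = \alpha_1\alpha_2(\alpha_1-\alpha_2)$ matches the paper's computation exactly.
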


\begin{figure}[h]
    \centering
    \includegraphics[width=1\linewidth]{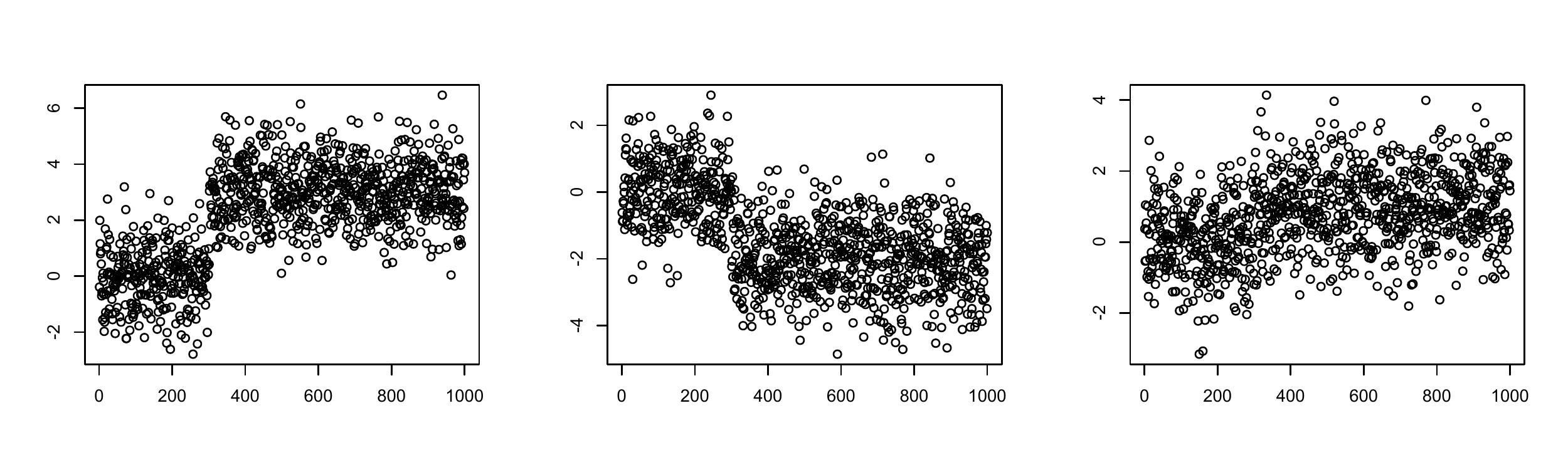}
    \caption{From left to right, the sub-plots show the scatter plots of $\textbf{u}_1' \textbf{X}_i \textbf{v}_1$, $\textbf{u}_2' \textbf{X}_i \textbf{v}_2$ and $\textbf{u}_3' \textbf{X}_i \textbf{v}_3$ versus the index $i = 1, \ldots , n$ in a sample of $n=1000$ observations from the model~\eqref{eq:matrix_normal_mixture} with $\alpha_1=0.3$, $\textbf{A}=\textbf{B}=\textbf{I}_3$ and $\textbf T_1=\textbf{0}$, $\textbf T_2=\mathrm{diag}(4,1,3)$. The first 300 observations correspond to the first group. No sign-correction has been applied to the projections and, due to this, if, e.g., the first two sub-plots would be superimposed the separation information contained in them would be partially cancelled out.}
    \label{fig:fig_1}
\end{figure}

Theorem~\ref{theo:matrix_lda_optimal_higher_rank} essentially says that the sequential optimization of $ \kappa_{\textbf{X}} $ allows the reconstruction of $ \textbf{W}_{\mathrm{LDA}} $, one rank-1 block at a time. Moreover, while the decomposition in Theorem \ref{theo:matrix_lda_optimal_higher_rank} is not the SVD of the matrix $ \textbf{W}_{\mathrm{LDA}} $ (since the vectors $ \textbf{u}_1, \ldots, \textbf{u}_d $ are not orthogonal in the usual sense, and similarly for the $ \textbf{v}_j $), inspecting the proof of the result reveals that the matrix 
\[ 
\textbf{A}^{1/2} \left\{ \sum_{j=1}^d \frac{s_j \lambda_j \sqrt{1 + \alpha_1 \alpha_2 \lambda_j^2} }{\sqrt{\mathrm{E} ( [ \textbf{u}_j' \{ \textbf{X} - \mathrm{E}(\textbf{X})  \} \textbf{v}_j  ]^2 )}} \textbf{u}_j \textbf{v}_j'  \right\} \textbf{B}^{1/2}
\]
is in fact the SVD of $ \textbf{A}^{1/2} \textbf{W}_{\mathrm{LDA}} \textbf{B}^{1/2} $, i.e., the matrix giving the optimal projection for the ``standardized'' observation $ \textbf{A}^{-1/2} \textbf{X} \textbf{B}^{-1/2} $. This gives us an interpretation for the result of Theorem \ref{theo:matrix_lda_optimal_higher_rank}; by the Eckart-Young Theorem, by optimizing $ \kappa_{\textbf{X}} $ once, we recover (in the previous sense) the best rank-1 approximation of the optimal projection direction, by optimizing it twice, we recover the best rank-2 approximation to the optimal projection direction and so on. In this spirit, the number of extracted directions can be seen as a tuning parameter giving a trade-off between lesser computational burden (smaller number of extracted directions) and increased accuracy (larger number of extracted directions), with the guarantee that the approximation of the optimal direction is always the best possible for any given rank.

Let us next demystify the role of the signs $s_1, \dots, s_d$ in Theorem \ref{theo:matrix_lda_optimal_higher_rank}. Intuitively, their role is to guarantee that the signs of the successive optimizers match in the sense that the same group is always projected onto the same side of the real line. In more detail, without loss of generality, let $\mathrm{E}(\textbf{X}) = \textbf{0}$ and consider the first optimizing pair $(\textbf{u}_1, \textbf{v}_1)$, denoting $\mu_k := \textbf{u}_1' \textbf{T}_k \textbf{v}_1$, $k = 1,2$, and $\sigma^2 := \mathrm{Var}(\textbf{u}_1'\textbf{X} \textbf{v}_1)$. Then, due to the zero-mean assumption, $\alpha_1 \mu_1+\alpha_2 \mu_2=0$ and
$$
\mathbb{E}\{ (\textbf{u}_1'\textbf{X} \textbf{v}_1 )^3 \} = \alpha_1 \mu_1^3+\alpha_2 \mu_2^3+3\sigma^2(\alpha_1 \mu_1+\alpha_2 \mu_2)=-\alpha_2 \mu_2 \mu_1^2+\alpha_2\mu_2^3=\alpha_2\mu_2(\mu_2-\mu_1)(\mu_2+\mu_1).
$$
Furthermore, assuming that $\alpha_1 \neq \alpha_2$, we have $\mu_1+\mu_2=(\alpha_1-\alpha_2)\mu_2/\alpha_1$, implying that 
$$
s_1=\mathrm{sign}[(\alpha_1 - \alpha_2)^{-1} \mathbb{E}\{ (\textbf{u}_1'\textbf{X} \textbf{v}_1 )^3 \}] = \mathrm{sign}( \mu_2 - \mu_1 ).
$$
This implies that the sign $s_1$ (and, analogously, the signs $s_2, \ldots , s_d$) is set such that the sign of the group whose projection is further away from the origin is taken to be positive. Fixing the signs in this way lets us avoid situations where two projections $\textbf{u}_j'\textbf{X}\textbf{v}_j$ and $\textbf{u}_k'\textbf{X}\textbf{v}_k$ would have opposite signs and would ``nullify'' each other, see the illustration in Figure~\ref{fig:fig_1}. The previous also reveals why the technique is unable to reconstruct $ \textbf{W}_{\mathrm{LDA}} $ when $\alpha_1 = \alpha_2 = 1/2$. Indeed, in the case of balanced groups, the two means are equally far away from the origin and the previous criterion does not let us identify the groups. In Section~\ref{sec:discussion} we discuss ways of working around this issue in practice.


We next illustrate an interesting property of $\kappa_{\bf X}$ under the normal mixture model \eqref{eq:matrix_normal_mixture}. Namely, Lemma~\ref{lemma:lemma3} shows that every member of any sequence of $(\mathcal{G}_{1,\textbf{X}},\mathcal{G}_{2,\textbf{X}})$-optimizers of $\kappa_{\bf X}$ is in fact a stationary point of the unconstrained objective function $\kappa_{\bf X}$. The result is given from the viewpoint of maximization but applies equally under minimization. 

\begin{lemma}\label{lemma:lemma3}
Assume that model \eqref{eq:matrix_normal_mixture} holds and that $ \alpha_1 \in \mathcal{A}_{0} $. Let $(\textbf{u}_{1}, \textbf{v}_{1}), \ldots , (\textbf{u}_{d}, \textbf{v}_{d}) \in \mathcal{U}_0$ be any sequence of $(\mathcal{G}_{1,\textbf{X}}, \mathcal{G}_{2,\textbf{X}})$-maximizers of $\kappa_{\bf X}$. Then, for all $k = 1, \ldots , d$, we have $\nabla\kappa_{\bf X}(\textbf{u}_k,\textbf{v}_k)=\textbf{0}$.
\end{lemma}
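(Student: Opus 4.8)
The plan is to exploit that, under the matrix normal mixture \eqref{eq:matrix_normal_mixture}, $\kappa_{\textbf{X}}$ factors through a single scalar statistic, and then to show that every member of a sequence of $(\mathcal{G}_{1,\textbf{X}},\mathcal{G}_{2,\textbf{X}})$-optimizers is, after the usual standardization, a singular-vector pair of the mean-difference matrix, at which the relevant chain-rule factor already has vanishing gradient. If $\alpha_1\in\mathcal{A}_{\mathrm{0}}$, so that $\alpha_1\alpha_2(1-6\alpha_1\alpha_2)=0$, the claim is immediate from Theorem~\ref{theo:matrix_lda_optimal_higher_rank}(ii) because then $\kappa_{\textbf{X}}$ is constant on $\mathcal{U}_0$; hence below I treat the complementary case. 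As in the proof of Theorem~\ref{theo:matrix_lda_optimal_higher_rank}, $\textbf{u}'\textbf{X}\textbf{v}$ is a two-component univariate normal mixture with common within-component variance $(\textbf{u}'\textbf{A}\textbf{u})(\textbf{v}'\textbf{B}\textbf{v})$ and mean gap $\textbf{u}'(\textbf{T}_2-\textbf{T}_1)\textbf{v}$, and a direct fourth-moment computation gives $\kappa_{\textbf{X}}(\textbf{u},\textbf{v})=\psi(r(\textbf{u},\textbf{v}))$ with
\[
r(\textbf{u},\textbf{v}):=\frac{\textbf{u}'(\textbf{T}_2-\textbf{T}_1)\textbf{v}}{\sqrt{(\textbf{u}'\textbf{A}\textbf{u})(\textbf{v}'\textbf{B}\textbf{v})}},\qquad \psi(t):=3+\frac{\alpha_1\alpha_2(1-6\alpha_1\alpha_2)\,t^4}{(1+\alpha_1\alpha_2 t^2)^2},
\]
so that $\psi'(t)=4\alpha_1\alpha_2(1-6\alpha_1\alpha_2)t^3/(1+\alpha_1\alpha_2 t^2)^3$ and $t\mapsto\psi(t)$ is strictly monotone in $|t|$. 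Because $\kappa_{\textbf{X}}$ and $r$ are invariant under positive rescaling of $\textbf{u}$ and of $\textbf{v}$, the chain rule gives $\nabla\kappa_{\textbf{X}}(\textbf{u},\textbf{v})=\psi'(r(\textbf{u},\textbf{v}))\nabla r(\textbf{u},\textbf{v})$ (gradients on $\mathcal{U}_0$, which by $0$-homogeneity coincide with the ambient ones); hence it suffices to prove $\nabla r(\textbf{u}_k,\textbf{v}_k)=\textbf{0}$ for each $k$.

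Next I would standardize. With $\textbf{p}:=\textbf{A}^{1/2}\textbf{u}/\|\textbf{A}^{1/2}\textbf{u}\|$, $\textbf{q}:=\textbf{B}^{1/2}\textbf{v}/\|\textbf{B}^{1/2}\textbf{v}\|$ and $\textbf{N}:=\textbf{A}^{-1/2}(\textbf{T}_2-\textbf{T}_1)\textbf{B}^{-1/2}$, we have $r(\textbf{u},\textbf{v})=\textbf{p}'\textbf{N}\textbf{q}$, and $(\textbf{u},\textbf{v})\mapsto(\textbf{p},\textbf{q})$ is a diffeomorphism of $\mathcal{U}_0$ onto itself which carries the objective and the constraints into their analogues and preserves vanishing of gradients. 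Hence it is equivalent to show that the induced optimizers $(\textbf{p}_k,\textbf{q}_k)$ satisfy $\nabla(\textbf{p}'\textbf{N}\textbf{q})=\textbf{0}$, i.e.\ $\textbf{N}\textbf{q}_k=r_k\textbf{p}_k$ and $\textbf{N}'\textbf{p}_k=r_k\textbf{q}_k$ with $r_k:=\textbf{p}_k'\textbf{N}\textbf{q}_k$ (a singular-triple identity once $r_k\neq0$). Using the matrix normal structure and $\mathrm{E}(\textbf{Z}\textbf{C}\textbf{Z}')=\mathrm{tr}(\textbf{C})\textbf{I}$ for $\textbf{Z}$ with i.i.d.\ standard normal entries, one computes $\textbf{G}_{1j,\textbf{X}}=\alpha_1\alpha_2(\textbf{T}_2-\textbf{T}_1)\textbf{v}_j\textbf{v}_j'(\textbf{T}_2-\textbf{T}_1)'+(\textbf{v}_j'\textbf{B}\textbf{v}_j)\textbf{A}$ and symmetrically for $\textbf{G}_{2j,\textbf{X}}$; substituting and cancelling strictly positive factors, the constraints $\textbf{u}_k'\textbf{G}_{1j,\textbf{X}}\textbf{u}_j=0$ and $\textbf{v}_k'\textbf{G}_{2j,\textbf{X}}\textbf{v}_j=0$ become
\[
\textbf{p}_k'(\textbf{p}_j+\alpha_1\alpha_2 r_j\textbf{N}\textbf{q}_j)=0\qquad\text{and}\qquad \textbf{q}_k'(\textbf{q}_j+\alpha_1\alpha_2 r_j\textbf{N}'\textbf{p}_j)=0.
\]

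Finally I would induct on $k$. For $k=1$ the optimization is over all of $\mathcal{U}_0$; by the monotonicity of $\psi$, optimizing $\kappa_{\textbf{X}}$ in the prescribed direction (maximization if $\alpha_1\in\mathcal{A}_{\mathrm{max}}$, minimization if $\alpha_1\in\mathcal{A}_{\mathrm{min}}$) amounts to maximizing $|\textbf{p}'\textbf{N}\textbf{q}|$, so $(\textbf{p}_1,\textbf{q}_1)$ is a leading singular-vector pair of $\textbf{N}$ with $|r_1|=\sigma_1(\textbf{N})>0$, whence $\nabla r(\textbf{u}_1,\textbf{v}_1)=\textbf{0}$. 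For the step, assume $(\textbf{p}_j,\textbf{q}_j)$ is a singular pair of $\textbf{N}$ with $r_j\neq0$ for all $j<k$; then $\textbf{p}_j+\alpha_1\alpha_2 r_j\textbf{N}\textbf{q}_j=(1+\alpha_1\alpha_2 r_j^2)\textbf{p}_j$, so the constraints collapse to plain orthogonality $\textbf{p}_k\perp\textbf{p}_j$, $\textbf{q}_k\perp\textbf{q}_j$ ($j<k$). Iterating the base-case argument identifies the $(\textbf{p}_j,\textbf{q}_j)$, $j<k$, as the $k-1$ largest singular pairs of $\textbf{N}$; on the orthogonal complements of $\{\textbf{p}_1,\dots,\textbf{p}_{k-1}\}$ and $\{\textbf{q}_1,\dots,\textbf{q}_{k-1}\}$ the maximum of $|\textbf{p}'\textbf{N}\textbf{q}|$ equals $\sigma_k(\textbf{N})$, which is positive since $k\leq d=\mathrm{rank}(\textbf{T}_2-\textbf{T}_1)=\mathrm{rank}(\textbf{N})$. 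Hence $(\textbf{p}_k,\textbf{q}_k)$ is a $k$-th singular pair of $\textbf{N}$ with $r_k\neq0$, so $\nabla r(\textbf{u}_k,\textbf{v}_k)=\textbf{0}$, and therefore $\nabla\kappa_{\textbf{X}}(\textbf{u}_k,\textbf{v}_k)=\textbf{0}$.

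The main obstacle is this inductive step: one must check carefully that the $(\mathcal{G}_{1,\textbf{X}},\mathcal{G}_{2,\textbf{X}})$-orthogonality constraints really do collapse to ordinary orthogonality once the preceding optimizers are known to be singular pairs, and that the ensuing deflated problems return the singular pairs in nonincreasing order, with due attention to possible multiplicities in the singular values of $\textbf{N}$. The remaining ingredients, namely the moment identity giving $\psi$, the chain-rule reduction, and the explicit forms of $\textbf{G}_{1j,\textbf{X}}$ and $\textbf{G}_{2j,\textbf{X}}$, are routine and essentially contained in the proofs of Lemma~\ref{lem:matrix_lda_optimal} and Theorem~\ref{theo:matrix_lda_optimal_higher_rank}.
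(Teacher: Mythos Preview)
Your proof is correct and follows essentially the same strategy as the paper: write $\kappa_{\textbf{X}}$ as a smooth scalar function of the ratio $w/t$ (you use $r=\sqrt{w/t}$, the paper uses $h=t/w$), apply the chain rule, and then invoke the fact---established in the proof of Theorem~\ref{theo:matrix_lda_optimal_higher_rank}---that each $(\textbf{u}_k,\textbf{v}_k)$ corresponds, after standardization, to a singular pair of $\textbf{A}^{-1/2}(\textbf{T}_2-\textbf{T}_1)\textbf{B}^{-1/2}$, which forces the inner gradient to vanish. Your version is somewhat more self-contained in that you re-derive the inductive collapse of the $(\mathcal{G}_{1,\textbf{X}},\mathcal{G}_{2,\textbf{X}})$-constraints to ordinary orthogonality, whereas the paper simply cites this from the proof of Theorem~\ref{theo:matrix_lda_optimal_higher_rank}.
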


Lemma \ref{lemma:lemma3} essentially says that $\kappa_{\bf X}$ has several local optima/saddle points, implying that its optimization is likely to be difficult in practice, see the later Section \ref{sec:algorithm} on algorithms for more discussion on this.

We close the section by noting that an alternative estimator of $\textbf{W}_{\mathrm{LDA}}$ for which maximization is always sufficient can be obtained by considering excess kurtosis instead of kurtosis. Indeed, this is also what was done in the vectorial context by \cite{radojicic2021}. However, as knowledge on the mixing proportions $\alpha_1, \alpha_2$ is anyway required to reconstruct $\textbf{W}_{\mathrm{LDA}}$ (in the form of the signs $s_1, \ldots , s_d$), we have chosen not to take this approach in the current work. The proof of Corollary \ref{cor:alternative_1} follows by straightforward adaptation from that of Theorem \ref{theo:matrix_lda_optimal_higher_rank} and is thus omitted.

\begin{corollary}\label{cor:alternative_1}
   Assume that model \eqref{eq:matrix_normal_mixture} holds. If $ \alpha_1 \in (0, 1)\setminus\{1/2 - 1/\sqrt{12}, \frac{1}{2}, 1/2 + 1/\sqrt{12}\} $, then any sequence of $(\mathcal{G}_{1,\textbf{X}}, \mathcal{G}_{2,\textbf{X}})$-maximizers of the function $ (\textbf{u}, \textbf{v}) \mapsto \{ \kappa_{\textbf{X}}(\textbf{u}, \textbf{v}) - 3 \}^2$ satisfies
		\[
		\sum_{j=1}^d \frac{s_j \lambda_j \sqrt{1 + \alpha_1 \alpha_2 \lambda_j^2} }{\sqrt{\mathrm{E} ( [ \textbf{u}_j' \{ \textbf{X} - \mathrm{E}(\textbf{X})  \} \textbf{v}_j  ]^2 )}} \textbf{u}_j \textbf{v}_j'  = \textbf{W}_{\mathrm{LDA}},
		\]
		where $ s_1, \ldots, s_d \in \{ -1, 1 \}$ are as in Theorem \ref{theo:matrix_lda_optimal_higher_rank}.
\end{corollary}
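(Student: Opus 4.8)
The plan is to reduce the statement to Theorem~\ref{theo:matrix_lda_optimal_higher_rank} by analyzing how the map $t \mapsto (t - 3)^2$ interacts with the range of $\kappa_{\textbf{X}}$ under model~\eqref{eq:matrix_normal_mixture}. The key observation, which should already be visible from the proof of Theorem~\ref{theo:matrix_lda_optimal_higher_rank}, is that for a matrix-normal mixture the kurtosis of any projection $\textbf{u}'\textbf{X}\textbf{v}$ takes the form $\kappa_{\textbf{X}}(\textbf{u},\textbf{v}) = 3 + c(\alpha_1)\, h(\textbf{u},\textbf{v})$ for some function $h \ge 0$ (a monotone transform of the squared standardized mean separation $\lambda^2$-type quantity) and a constant $c(\alpha_1) = \alpha_1\alpha_2(1 - 6\alpha_1\alpha_2)$ whose sign is positive exactly on $\mathcal{A}_{\mathrm{min}}$, negative exactly on $\mathcal{A}_{\mathrm{max}}$, and zero on $\mathcal{A}_{0}$. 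Consequently $\{\kappa_{\textbf{X}}(\textbf{u},\textbf{v}) - 3\}^2 = c(\alpha_1)^2 h(\textbf{u},\textbf{v})^2$, and since $t \mapsto t^2$ is strictly increasing on $[0,\infty)$, maximizing this squared excess kurtosis is equivalent to maximizing $h$, hence equivalent to maximizing $\kappa_{\textbf{X}}$ when $\alpha_1 \in \mathcal{A}_{\mathrm{min}}$ and to minimizing $\kappa_{\textbf{X}}$ when $\alpha_1 \in \mathcal{A}_{\mathrm{max}}$.

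With this equivalence in hand the argument proceeds in three steps. First I would exclude the degenerate proportions: the hypothesis removes $\alpha_1 = 1/2$ (where Theorem~\ref{theo:matrix_lda_optimal_higher_rank} itself fails, the balanced-group obstruction) and $\alpha_1 \in \mathcal{A}_0 = \{1/2 \pm 1/\sqrt{12}\}$ (where $c(\alpha_1) = 0$, so the squared excess kurtosis is identically zero and carries no information), so that in the remaining range $c(\alpha_1) \ne 0$ and we are genuinely in $\mathcal{A}_{\mathrm{min}}\setminus\{1/2\}$ or in $\mathcal{A}_{\mathrm{max}}$. Second, I would verify that the argument is stable under the orthogonality constraints defining a $(\mathcal{G}_{1,\textbf{X}}, \mathcal{G}_{2,\textbf{X}})$-sequence: at each stage $j$ the constrained feasible set is the same for all three objective functions $\kappa_{\textbf{X}}$, $-\kappa_{\textbf{X}}$, $(\kappa_{\textbf{X}} - 3)^2$, and on that set the latter is a strictly increasing function of $\mathrm{sign}(c(\alpha_1))\cdot(\kappa_{\textbf{X}} - 3)$, so a constrained maximizer of $(\kappa_{\textbf{X}} - 3)^2$ is exactly a constrained maximizer (resp. minimizer) of $\kappa_{\textbf{X}}$; crucially the constraint matrices $\textbf{G}_{1k,\textbf{X}}, \textbf{G}_{2k,\textbf{X}}$ depend only on the previously selected pairs, not on which objective is being optimized, so the two procedures generate the same admissible sequences. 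Third, I would invoke part~i) of Theorem~\ref{theo:matrix_lda_optimal_higher_rank} when $\alpha_1 \in \mathcal{A}_{\mathrm{min}}\setminus\{1/2\}$ and part~iii) when $\alpha_1 \in \mathcal{A}_{\mathrm{max}}$ to conclude that the resulting sequence $(\textbf{u}_1,\textbf{v}_1),\ldots,(\textbf{u}_d,\textbf{v}_d)$ satisfies the stated reconstruction identity for $\textbf{W}_{\mathrm{LDA}}$, with the same signs $s_1,\ldots,s_d$ (these are defined through third moments of the projections and are unaffected by the choice of optimization objective).

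The only mildly delicate point — and the place I would be careful in writing the details — is the second step: one must check that the equivalence of optimizers is genuinely an \emph{equivalence of sets} and not just that some maximizer of one is a maximizer of the other. Because $t\mapsto t^2$ is injective on $[0,\infty)$ and $h \ge 0$ throughout, a point attains the maximum of $(\kappa_{\textbf{X}}-3)^2$ over the (closed, nonempty, by Lemma~\ref{lem:existence}) constraint set if and only if it attains the maximum of $|h|$ there, if and only if it attains the extreme value of $\kappa_{\textbf{X}} - 3$ of the appropriate sign; no spurious optimizers are introduced. Since the rest is a verbatim copy of the Theorem~\ref{theo:matrix_lda_optimal_higher_rank} bookkeeping, the corollary follows, and this is precisely why the authors remark that the proof is a straightforward adaptation and omit it.
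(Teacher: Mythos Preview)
Your overall strategy is correct and is precisely the straightforward adaptation the paper has in mind: under model~\eqref{eq:matrix_normal_mixture} one has $\kappa_{\textbf{X}}(\textbf{u},\textbf{v}) - 3 = \beta(\gamma-3\beta)\,g(z)^2$ with $g(z) = z/(1+\beta z) \ge 0$, so $(\kappa_{\textbf{X}}-3)^2$ is a strictly increasing function of $z$ whenever $\gamma - 3\beta \neq 0$, and the rest is the bookkeeping of Theorem~\ref{theo:matrix_lda_optimal_higher_rank} verbatim, including the constraint sets and the sign determination via third moments.

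There is, however, a sign slip in your write-up that makes the argument internally inconsistent as stated. The constant $c(\alpha_1) = \alpha_1\alpha_2(1-6\alpha_1\alpha_2)$ is \emph{negative} on $\mathcal{A}_{\mathrm{min}}$ (there $\alpha_1\alpha_2 > 1/6$) and \emph{positive} on $\mathcal{A}_{\mathrm{max}}$, not the other way round. With the correct signs, maximizing $(\kappa_{\textbf{X}}-3)^2$ is equivalent to \emph{minimizing} $\kappa_{\textbf{X}}$ when $\alpha_1 \in \mathcal{A}_{\mathrm{min}}\setminus\{1/2\}$ (so that part~i) of Theorem~\ref{theo:matrix_lda_optimal_higher_rank} applies) and to \emph{maximizing} $\kappa_{\textbf{X}}$ when $\alpha_1 \in \mathcal{A}_{\mathrm{max}}$ (so that part~iii) applies). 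As you have it, you claim the equivalence goes to maximization on $\mathcal{A}_{\mathrm{min}}$ but then invoke part~i), which concerns minimizers; the two do not match. Once the signs are corrected the proof goes through exactly as you outline.
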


\subsection{Optimization of $ \psi_{\textbf{X}} $}

We next consider the optimization of the projection index $ \psi_{\textbf{X}} $ in \eqref{eq:psi_definition} involving only the direction $ \textbf{u} $, along with its counterpart,
\[ 
\psi_{\textbf{X}'}( \textbf{v} ) := \mathrm{E}\left[ \left\{ \textbf{v}' \tilde{\textbf{X}}' \left[ \mathrm{E} \left( \tilde{\textbf{X}} \textbf{v} \textbf{v}' \tilde{\textbf{X}}' \right) \right]^{-1} \tilde{\textbf{X}} \textbf{v} \right\}^2  \right],
\]
depending only on $ \textbf{v} $. The next theorem proves a result analogous to Theorem \ref{theo:matrix_lda_optimal_higher_rank} for $ \psi_{\textbf{X}} $ and $ \psi_{\textbf{X}'} $, with the crucial difference that the latter can recover the direction of optimal separation under repeated optimization only if the non-zero singular values of $\textbf{A}^{-1/2} (\textbf{T}_2 - \textbf{T}_1) \textbf{B}^{-1/2}$ are distinct. The intuition behind this assumption will be discussed after the statement of the result.

In the following, for a fixed collection $ (\textbf{u}_1, \textbf{v}_1), \ldots , (\textbf{u}_d, \textbf{v}_d) \in \mathcal{U}_0 $, we define the sets of matrices $\mathcal{G}_{1,\textbf{X}}$ and  $\mathcal{G}_{2,\textbf{X}} $ similarly as prior to Theorem \ref{theo:matrix_lda_optimal_higher_rank}.
Additionally, we define
\[ 
\lambda_j := \sqrt{ \max \left\{ \frac{\theta_j}{1 - \alpha_1 \alpha_2 \theta_j}, 0 \right\} }, \quad \mbox{where } \theta_j := \sqrt{ \max \left\{ \frac{\psi_{\textbf{X}}(\textbf{u}_j) - q(q + 2)}{\alpha_1 \alpha_2 (1 - 6 \alpha_1 \alpha_2)}, 0 \right\} }.
\]
That is, $\lambda_j$ and $\theta_j$ are as in Section \ref{subsec:kappa}, apart from changing the constant~3 to $ q(q + 2) $.

\begin{theorem}\label{theo:matrix_lda_optimal_mode_wise}
	Assume that model \eqref{eq:matrix_normal_mixture} holds and that the non-zero singular values of $\textbf{A}^{-1/2} (\textbf{T}_2 - \textbf{T}_1) \textbf{B}^{-1/2}$ are distinct. Then,
	\begin{itemize}
		\item[i)] if $ \alpha_1 \in \mathcal{A}_{\mathrm{min}}\setminus\{\frac{1}{2}\} $, then any sequence of $(\mathcal{G}_{1,\textbf{X}}, \mathcal{G}_{2,\textbf{X}})$-minimizers of $ (\psi_{\textbf{X}}, \psi_{\textbf{X}'}) $ satisfies
		\[
		\sum_{j=1}^d \frac{s_j \lambda_j \sqrt{1 + \alpha_1 \alpha_2 \lambda_j^2} }{\sqrt{\mathrm{E} ( [ \textbf{u}_j' \{ \textbf{X} - \mathrm{E}(\textbf{X})  \} \textbf{v}_j  ]^2 )}} \textbf{u}_j \textbf{v}_j'  = \textbf{W}_{\mathrm{LDA}},
		\]
		where $ s_1, \ldots, s_d \in \{ -1, 1 \}$ are the signs of the quantities
		\[ 
		(\alpha_1 - \alpha_2)^{-1} \mathrm{E} ( [ \textbf{u}_j' \{ \textbf{X} - \mathrm{E}(\textbf{X})  \} \textbf{v}_j  ]^3 ).
		\]
		\item[ii)] if $ \alpha_1 \in \mathcal{A}_{0} $, then $ \psi_{\textbf{X}}(\textbf{u}) = q(q + 2) $ and $ \psi_{\textbf{X}}(\textbf{v}) = p(p + 2) $, regardless of $ \textbf{u}, \textbf{v} $.
		\item[iii)] if $  \alpha_1 \in \mathcal{A}_{\mathrm{max}} $, then any sequence of $(\mathcal{G}_{1,\textbf{X}}, \mathcal{G}_{2,\textbf{X}})$-maximizers of $ (\psi_{\textbf{X}}, \psi_{\textbf{X}'}) $ satisfies
		\[
		\sum_{j=1}^d  \frac{s_j \lambda_j \sqrt{1 + \alpha_1 \alpha_2 \lambda_j^2} }{\sqrt{\mathrm{E} ( [ \textbf{u}_j' \{ \textbf{X} - \mathrm{E}(\textbf{X})  \} \textbf{v}_j  ]^2 )}} \textbf{u}_j \textbf{v}_j'  = \textbf{W}_{\mathrm{LDA}},
		\]
		where $ s_1, \ldots, s_d \in \{ -1, 1 \}$ are the signs of the quantities
		\[ 
		(\alpha_1 - \alpha_2)^{-1} \mathrm{E} ( [ \textbf{u}_j' \{ \textbf{X} - \mathrm{E}(\textbf{X})  \} \textbf{v}_j  ]^3 ).
		\]
	\end{itemize}
\end{theorem}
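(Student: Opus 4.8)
The plan is to follow the architecture of the proof of Theorem~\ref{theo:matrix_lda_optimal_higher_rank}, the genuinely new feature being that $\psi_{\textbf{X}}$ and $\psi_{\textbf{X}'}$ act on the two sides separately, so that one must additionally verify that the $\textbf{G}$-orthogonality constraints keep the two one-sided sequences of optimizers ``in register''. Since Mardia's kurtosis, the constraint matrices $\textbf{G}_{1k,\textbf{X}},\textbf{G}_{2k,\textbf{X}}$ and the target $\textbf{W}_{\mathrm{LDA}}$ are all equivariant under the standardizing transformation $\textbf{X}\mapsto\textbf{A}^{-1/2}\textbf{X}\textbf{B}^{-1/2}$, I would first reduce to the case $\textbf{A}=\textbf{I}_p$, $\textbf{B}=\textbf{I}_q$; then $\textbf{W}_{\mathrm{LDA}}=\textbf{T}_2-\textbf{T}_1=:\textbf{D}$, the hypothesis becomes that the non-zero singular values $\sigma_1>\cdots>\sigma_d>0$ of $\textbf{D}$ are distinct, and we fix an SVD $\textbf{D}=\sum_{j=1}^d\sigma_j\textbf{p}_j\textbf{q}_j'$.

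First I would determine the value of $\psi_{\textbf{X}}$. Conditionally on the mixture component $\ell$ we have $\textbf{X}'\textbf{u}\sim\mathcal{N}_q(\textbf{T}_\ell'\textbf{u},\textbf{I}_q)$ for every $\textbf{u}\in\mathbb{S}^{p-1}$, so $\textbf{X}'\textbf{u}$ is a $q$-variate two-component Gaussian mixture with common covariance $\textbf{I}_q$ and mean difference $\textbf{D}'\textbf{u}$; rotating so that $\textbf{D}'\textbf{u}$ points along the first coordinate axis splits this vector (by affine invariance of Mardia's kurtosis) into a univariate two-component Gaussian mixture in the first coordinate and an independent block of $q-1$ Gaussian coordinates (independent also of the label), and expanding the expected squared Mahalanobis distance as $\mathrm{E}[(Z_1+W)^2]$ with $Z_1$ the squared standardized first coordinate and $W\sim\chi^2_{q-1}$, together with the univariate kurtosis-of-a-mixture identity, gives
\[
\psi_{\textbf{X}}(\textbf{u})=q(q+2)+\alpha_1\alpha_2(1-6\alpha_1\alpha_2)\,\frac{t(\textbf{u})^2}{(1+\alpha_1\alpha_2 t(\textbf{u}))^2},\qquad t(\textbf{u}):=\textbf{u}'\textbf{D}\textbf{D}'\textbf{u},
\]
and, symmetrically, $\psi_{\textbf{X}'}(\textbf{v})=p(p+2)+\alpha_1\alpha_2(1-6\alpha_1\alpha_2)\,s(\textbf{v})^2/(1+\alpha_1\alpha_2 s(\textbf{v}))^2$ with $s(\textbf{v}):=\textbf{v}'\textbf{D}'\textbf{D}\textbf{v}$. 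Part~(ii) is immediate, since $1-6\alpha_1\alpha_2=0$ on $\mathcal{A}_{\mathrm{0}}$; and since $r\mapsto r^2/(1+\alpha_1\alpha_2 r)^2$ is strictly increasing on $[0,\infty)$, on $\mathcal{A}_{\mathrm{min}}$ the factor $1-6\alpha_1\alpha_2$ is negative so minimizing $\psi_{\textbf{X}}$ (resp.\ $\psi_{\textbf{X}'}$) is equivalent to maximizing the Rayleigh quotient $t(\textbf{u})$ (resp.\ $s(\textbf{v})$), while on $\mathcal{A}_{\mathrm{max}}$ the factor is positive and maximizing $\psi_{\textbf{X}}$ leads to the same problem.

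Next I would show by induction on $j$ that every admissible sequence of optimizers satisfies $\textbf{u}_j=\epsilon^u_j\textbf{p}_j$, $\textbf{v}_j=\epsilon^v_j\textbf{q}_j$ for some signs $\epsilon^u_j,\epsilon^v_j\in\{-1,1\}$. The unconstrained maximizer of $t$ over $\mathbb{S}^{p-1}$ is $\pm\textbf{p}_1$ because the largest eigenvalue $\sigma_1^2$ of $\textbf{D}\textbf{D}'$ is simple (as $\sigma_1>\sigma_2$, or trivially if $d=1$); likewise $\textbf{v}_1=\pm\textbf{q}_1$. Assuming $\textbf{u}_k=\pm\textbf{p}_k$ and $\textbf{v}_k=\pm\textbf{q}_k$ for all $k<j$, we get $\textbf{D}\textbf{v}_k=\pm\sigma_k\textbf{p}_k$ and $\textbf{D}'\textbf{u}_k=\pm\sigma_k\textbf{q}_k$, so $\textbf{G}_{1k,\textbf{X}}=\mathrm{Cov}(\textbf{X}\textbf{v}_k)=\textbf{I}_p+\alpha_1\alpha_2\sigma_k^2\textbf{p}_k\textbf{p}_k'$ and $\textbf{G}_{2k,\textbf{X}}=\mathrm{Cov}(\textbf{X}'\textbf{u}_k)=\textbf{I}_q+\alpha_1\alpha_2\sigma_k^2\textbf{q}_k\textbf{q}_k'$; as $1+\alpha_1\alpha_2\sigma_k^2>0$, the constraint $\textbf{u}_j'\textbf{G}_{1k,\textbf{X}}\textbf{u}_k=0$ collapses to the ordinary orthogonality $\textbf{u}_j\perp\textbf{p}_k$, and similarly $\textbf{v}_j\perp\textbf{q}_k$. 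Maximizing $t$ over unit vectors orthogonal to $\textbf{p}_1,\dots,\textbf{p}_{j-1}$ then returns $\textbf{u}_j=\pm\textbf{p}_j$ by the Courant--Fischer characterization, the distinctness hypothesis ensuring that $\sigma_j^2$ is a simple eigenvalue of the restriction of $\textbf{D}\textbf{D}'$ to $\{\textbf{p}_1,\dots,\textbf{p}_{j-1}\}^\perp$ for every $j\le d$; likewise $\textbf{v}_j=\pm\textbf{q}_j$, which closes the induction.

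It remains to reconstruct $\textbf{W}_{\mathrm{LDA}}$. With $\textbf{u}_j=\epsilon^u_j\textbf{p}_j$, $\textbf{v}_j=\epsilon^v_j\textbf{q}_j$ one computes $t(\textbf{u}_j)=\sigma_j^2$, hence $\theta_j=\sigma_j^2/(1+\alpha_1\alpha_2\sigma_j^2)$ and $\lambda_j=\sigma_j$, together with $\textbf{u}_j'\textbf{D}\textbf{v}_j=\epsilon^u_j\epsilon^v_j\sigma_j$ and $\mathrm{E}([\textbf{u}_j'\{\textbf{X}-\mathrm{E}(\textbf{X})\}\textbf{v}_j]^2)=1+\alpha_1\alpha_2\sigma_j^2$, so that the scalar coefficient $\lambda_j\sqrt{1+\alpha_1\alpha_2\lambda_j^2}/\sqrt{\mathrm{E}([\textbf{u}_j'\{\textbf{X}-\mathrm{E}(\textbf{X})\}\textbf{v}_j]^2)}$ equals $\sigma_j$; moreover $\mathrm{E}([\textbf{u}_j'\{\textbf{X}-\mathrm{E}(\textbf{X})\}\textbf{v}_j]^3)=\alpha_1\alpha_2(\alpha_1-\alpha_2)(\textbf{u}_j'\textbf{D}\textbf{v}_j)^3$, whence $s_j=\mathrm{sign}(\epsilon^u_j\epsilon^v_j)=\epsilon^u_j\epsilon^v_j$ (this is where $\alpha_1\ne\alpha_2$, guaranteed by $\mathcal{A}_{\mathrm{min}}\setminus\{1/2\}$ in (i) and automatic in (iii), enters). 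Therefore
\[
\sum_{j=1}^d\frac{s_j\lambda_j\sqrt{1+\alpha_1\alpha_2\lambda_j^2}}{\sqrt{\mathrm{E}([\textbf{u}_j'\{\textbf{X}-\mathrm{E}(\textbf{X})\}\textbf{v}_j]^2)}}\,\textbf{u}_j\textbf{v}_j'=\sum_{j=1}^d\epsilon^u_j\epsilon^v_j\sigma_j\,(\epsilon^u_j\textbf{p}_j)(\epsilon^v_j\textbf{q}_j)'=\sum_{j=1}^d\sigma_j\textbf{p}_j\textbf{q}_j'=\textbf{D},
\]
the signs cancelling, and undoing the standardization gives $\textbf{W}_{\mathrm{LDA}}=\textbf{A}^{-1}(\textbf{T}_2-\textbf{T}_1)\textbf{B}^{-1}$; case (iii) is identical with ``minimize'' replaced by ``maximize''. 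The hard part will be the inductive step: for $\kappa_{\textbf{X}}$ the joint optimization over $(\textbf{u},\textbf{v})$ automatically returns a matched pair of left and right singular vectors, whereas here the sides are optimized in isolation and one must show that the $\textbf{G}$-constraints force $\textbf{u}_j$ and $\textbf{v}_j$ to correspond to the \emph{same} singular value $\sigma_j$. This is precisely what breaks down when two non-zero singular values coincide---the constrained optima are then determined only up to independent rotations within the two singular subspaces, so that $\sum_j s_j\lambda_j(\cdots)\textbf{u}_j\textbf{v}_j'$ need not equal $\textbf{W}_{\mathrm{LDA}}$---and the distinctness hypothesis is exactly what restores simplicity of each relevant eigenvalue; a lesser technical point is the closed-form evaluation of $\psi_{\textbf{X}}$, which is routine once the rotation-and-factorization reduction and the univariate Gaussian-mixture kurtosis identity are available.
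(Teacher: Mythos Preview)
Your argument is correct and structurally the same as the paper's: both reduce the optimization of $\psi_{\textbf{X}}$ to the maximization of a Rayleigh quotient for $\textbf{D}\textbf{D}'$ (resp.\ $\textbf{D}'\textbf{D}$), show that the $\textbf{G}$-orthogonality constraints collapse to ordinary orthogonality with the previously found singular vectors, and then invoke the variational/Courant--Fischer characterization of eigenvalues (this is exactly where the distinct-singular-values hypothesis is used in both proofs) before completing the reconstruction of $\textbf{W}_{\mathrm{LDA}}$ as in Theorem~\ref{theo:matrix_lda_optimal_higher_rank}.

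The two execution differences are worth noting. First, you reduce to $\textbf{A}=\textbf{I}_p$, $\textbf{B}=\textbf{I}_q$ by the affine equivariance stated at the end of Section~\ref{sec:discussion}, whereas the paper keeps $\textbf{A},\textbf{B}$ general throughout and works with the reparametrization $\textbf{u}_0=\textbf{A}^{1/2}\textbf{u}$, $\textbf{H}_0=\textbf{A}^{-1/2}\textbf{H}\textbf{B}^{-1/2}$; your reduction is cleaner and shortens many formulas. Second, your evaluation of $\psi_{\textbf{X}}(\textbf{u})$ is probabilistic---rotate $\textbf{X}'\textbf{u}$ so that the mean-difference lies along $\textbf{e}_1$, split into a univariate mixture plus an independent $\chi^2_{q-1}$ block, and expand $\mathrm{E}[(Z_1+W)^2]$---while the paper computes $\psi_{\textbf{X}}(\textbf{u})$ algebraically via the moment formulas for quadratic forms in normal vectors (e.g., Theorem~3.2b.2 in Mathai--Provost) together with the Sherman--Morrison inversion of $\textbf{G}_0=\|\textbf{u}_0\|^2\textbf{I}_q+\beta\textbf{H}_0'\textbf{u}_0\textbf{u}_0'\textbf{H}_0$. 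Both routes yield the same closed form $\psi_{\textbf{X}}(\textbf{u})=q(q+2)+\beta(1-6\beta)\{z/(1+\beta z)\}^2$ (noting $\gamma-3\beta=1-6\beta$), but yours avoids the Sherman--Morrison bookkeeping at the cost of invoking affine invariance of Mardia's kurtosis; the paper's route is more self-contained but considerably longer.
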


The conclusion of Theorem \ref{theo:matrix_lda_optimal_mode_wise} is intuitively rather unsuprising: As $ \psi_{\textbf{X}} $ and $\psi_{\textbf{X}'} $ each ``see'' only one side of the model, they fail to recover $\textbf{W}_{\mathrm{LDA}}$, an object depending on both sides of the model, unless it carries a simple enough structure. On a more technical level, as discussed after Theorem \ref{theo:matrix_lda_optimal_higher_rank}, the reconstruction of $\textbf{W}_{\mathrm{LDA}}$ essentially boils down to the estimation of the singular vector pairs of the matrix $ \textbf{R} := \textbf{A}^{1/2} \textbf{W}_{\mathrm{LDA}} \textbf{B}^{1/2} $. Now, $\kappa_\textbf{X}$ succeeds in this by always extracting both $\textbf{u}$ and $\textbf{v}$ at the same time, forming complete singular pairs after each repeated optimization (pairs $(\textbf{u}_0, \textbf{v}_0)$ satisfying $\textbf{R} \textbf{v}_0 \propto \textbf{u}_0$). Whereas, inspection of the proof of Theorem~\ref{theo:matrix_lda_optimal_mode_wise} reveals that $ \psi_{\textbf{X}} $ and $\psi_{\textbf{X}'} $ extract, respectively, eigenvectors of the matrices $\textbf{R}\textbf{R}'$ and $\textbf{R}' \textbf{R}$, which are guaranteed to form a pair of singular vectors of $\textbf{R}$ only if its singular spaces are one-dimensional, i.e., its singular values are distinct.

As with the simultaneous index $\kappa_{\textbf{X}}$, also the mode-wise indices can be made non-dependent on the optimization direction by considering ``excess kurtosis'' instead of kurtosis (the involved quantity is not true excess kurtosis as it uses dimension-dependent constants in place of 3). The resulting Corollary \ref{cor:alternative_2} follows straightforwardly from Theorem~\ref{theo:matrix_lda_optimal_mode_wise} and its proof is omitted.

\begin{corollary}\label{cor:alternative_2}
   Assume that model \eqref{eq:matrix_normal_mixture} holds and that the non-zero singular values of $\textbf{A}^{-1/2} (\textbf{T}_2 - \textbf{T}_1) \textbf{B}^{-1/2}$ are distinct. If $ \alpha_1 \in (0, 1)\setminus\{1/2 - 1/\sqrt{12}, \frac{1}{2}, 1/2 + 1/\sqrt{12}\} $, then any sequence of $(\mathcal{G}_{1,\textbf{X}}, \mathcal{G}_{2,\textbf{X}})$-minimizers of the pair of functions $ \textbf{u} \mapsto \{ \psi_{\textbf{X}}(\textbf{u}) - q(q + 2) \}^2$, $ \textbf{v} \mapsto \{ \psi_{\textbf{X}'}(\textbf{v}) - p(p + 2) \}^2$ satisfies
		\[
		\sum_{j=1}^d \frac{s_j \lambda_j \sqrt{1 + \alpha_1 \alpha_2 \lambda_j^2} }{\sqrt{\mathrm{E} ( [ \textbf{u}_j' \{ \textbf{X} - \mathrm{E}(\textbf{X})  \} \textbf{v}_j  ]^2 )}} \textbf{u}_j \textbf{v}_j'  = \textbf{W}_{\mathrm{LDA}},
		\]
		where $ s_1, \ldots, s_d \in \{ -1, 1 \}$ are as in Theorem \ref{theo:matrix_lda_optimal_mode_wise}.
\end{corollary}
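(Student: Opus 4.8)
The plan is to deduce Corollary~\ref{cor:alternative_2} from Theorem~\ref{theo:matrix_lda_optimal_mode_wise} by showing that, for the relevant range of $\alpha_1$, minimizing the squared-excess functions $\textbf{u} \mapsto \{ \psi_{\textbf{X}}(\textbf{u}) - q(q+2) \}^2$ and $\textbf{v} \mapsto \{ \psi_{\textbf{X}'}(\textbf{v}) - p(p+2) \}^2$ produces exactly the same sequences of constrained optimizers as the theorem, so that the reconstruction formula for $\textbf{W}_{\mathrm{LDA}}$ carries over verbatim. The key observation is that minimizing a squared function $(f - c)^2$ is equivalent to \emph{maximizing} $|f - c|$ when $f - c$ does not change sign over the feasible set. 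Thus the entire argument hinges on establishing that, under model~\eqref{eq:matrix_normal_mixture}, the quantity $\psi_{\textbf{X}}(\textbf{u}) - q(q+2)$ has a constant sign across $\mathbb{S}^{p-1}$ (and similarly for the $\textbf{v}$-side), with that sign determined by whether $\alpha_1 \in \mathcal{A}_{\mathrm{min}}$ or $\alpha_1 \in \mathcal{A}_{\mathrm{max}}$.

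First I would extract, from the proof of Theorem~\ref{theo:matrix_lda_optimal_mode_wise}, the closed-form expression for $\psi_{\textbf{X}}(\textbf{u}) - q(q+2)$ in terms of the relevant projected quantities. The structure of the $\lambda_j, \theta_j$ definitions already signals that $\psi_{\textbf{X}}(\textbf{u}) - q(q+2)$ is, up to a positive factor, equal to $\alpha_1 \alpha_2 (1 - 6\alpha_1\alpha_2)$ times a nonnegative quantity (the thresholded $\theta_j$ expression). The sign of the factor $1 - 6\alpha_1\alpha_2$ is therefore decisive: one checks that $1 - 6\alpha_1\alpha_2 > 0$ precisely when $|\alpha_1 - 1/2| > 1/\sqrt{12}$, i.e. $\alpha_1 \in \mathcal{A}_{\mathrm{max}}$, and $1 - 6\alpha_1\alpha_2 < 0$ precisely when $\alpha_1 \in \mathcal{A}_{\mathrm{min}}$, since $\alpha_1\alpha_2 = \alpha_1(1-\alpha_1)$ and $6\alpha_1(1-\alpha_1) = 1$ at $\alpha_1 = 1/2 \pm 1/\sqrt{12}$. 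Hence $\psi_{\textbf{X}}(\textbf{u}) - q(q+2) \geq 0$ for all $\textbf{u}$ when $\alpha_1 \in \mathcal{A}_{\mathrm{max}}$ and $\leq 0$ for all $\textbf{u}$ when $\alpha_1 \in \mathcal{A}_{\mathrm{min}}$, with the excluded borderline values $1/2 \pm 1/\sqrt{12}$ (where the sign vanishes) and $1/2$ (where the reconstruction fails) removed in the corollary's hypothesis on $\alpha_1$.

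Given the constant sign, minimizing $\{ \psi_{\textbf{X}}(\textbf{u}) - q(q+2) \}^2$ is equivalent to minimizing $|\psi_{\textbf{X}}(\textbf{u}) - q(q+2)|$, which in turn equals minimizing $\psi_{\textbf{X}}(\textbf{u})$ when $\alpha_1 \in \mathcal{A}_{\mathrm{min}}$ (where the excess is nonpositive, so the absolute value is the negated excess and is smallest when $\psi_{\textbf{X}}$ is smallest) and minimizing the decreasing map $c - \psi_{\textbf{X}}(\textbf{u})$, i.e. maximizing $\psi_{\textbf{X}}(\textbf{u})$, when $\alpha_1 \in \mathcal{A}_{\mathrm{max}}$. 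I would carry out this reduction carefully at each of the $d$ stages, noting that the orthogonality constraints encoded by $\mathcal{G}_{1,\textbf{X}}, \mathcal{G}_{2,\textbf{X}}$ restrict the feasible set but do not affect the sign of the excess on that set, so the equivalence of minimizing the squared excess with minimizing (resp. maximizing) $\psi_{\textbf{X}}$ persists stagewise. Consequently, every sequence of $(\mathcal{G}_{1,\textbf{X}}, \mathcal{G}_{2,\textbf{X}})$-minimizers of the squared-excess pair coincides with a sequence of $(\mathcal{G}_{1,\textbf{X}}, \mathcal{G}_{2,\textbf{X}})$-minimizers of $(\psi_{\textbf{X}}, \psi_{\textbf{X}'})$ in the case $\alpha_1 \in \mathcal{A}_{\mathrm{min}}$, and with a sequence of $(\mathcal{G}_{1,\textbf{X}}, \mathcal{G}_{2,\textbf{X}})$-maximizers in the case $\alpha_1 \in \mathcal{A}_{\mathrm{max}}$. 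In either case, Theorem~\ref{theo:matrix_lda_optimal_mode_wise} (part i) or part iii), respectively) then delivers exactly the claimed reconstruction formula for $\textbf{W}_{\mathrm{LDA}}$ with the same signs $s_1, \ldots, s_d$, completing the proof.

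The main obstacle I anticipate is verifying cleanly that the constant-sign property is genuinely uniform over the \emph{constrained} feasible sets at every stage $j \geq 2$, rather than only over the full sphere; one must confirm that the excess expression retains its global sign structure once the constraints $\textbf{u}_j' \textbf{G}_{1k,\textbf{X}} \textbf{u}_k = 0$ are imposed. This should follow because the sign is governed solely by the factor $1 - 6\alpha_1\alpha_2$ multiplying a manifestly nonnegative term, a structure independent of which admissible $\textbf{u}$ one substitutes, but the bookkeeping linking the stagewise-constrained minimizers of the squared excess to those of $\psi_{\textbf{X}}$ itself (and ensuring no spurious minimizers arise where the excess is zero but $\psi_{\textbf{X}}$ is not extremal) requires care—though the excluded borderline values of $\alpha_1$ guarantee the excess is strictly signed away from zero except at the genuine optimizers.
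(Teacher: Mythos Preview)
Your overall strategy---reduce to Theorem~\ref{theo:matrix_lda_optimal_mode_wise} via the closed form $\psi_{\textbf{X}}(\textbf{u}) - q(q+2) = \alpha_1\alpha_2(1-6\alpha_1\alpha_2)\{z/(1+\alpha_1\alpha_2 z)\}^2$---is exactly what the paper intends. But the case analysis in your third paragraph is backwards. When $\alpha_1 \in \mathcal{A}_{\mathrm{min}}$ the excess is nonpositive, so its absolute value is $q(q+2)-\psi_{\textbf{X}}(\textbf{u})$, which is smallest when $\psi_{\textbf{X}}$ is \emph{largest}, not smallest. Likewise, when $\alpha_1 \in \mathcal{A}_{\mathrm{max}}$ the excess is nonnegative and minimizing it means \emph{minimizing} $\psi_{\textbf{X}}$, not maximizing it. (Your opening ``key observation'' that minimizing $(f-c)^2$ is equivalent to maximizing $|f-c|$ is already the wrong direction; you then silently revert to the correct ``minimizing $|f-c|$'' in the next paragraph but carry the error into the case split.) With the signs handled correctly, minimizing the squared excess lands you on the \emph{wrong} branches of Theorem~\ref{theo:matrix_lda_optimal_mode_wise}---maximization of $\psi_{\textbf{X}}$ in the $\mathcal{A}_{\mathrm{min}}$ regime and minimization in the $\mathcal{A}_{\mathrm{max}}$ regime---so the appeal to parts (i) and (iii) does not go through.

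In fact this shows that the corollary, as literally stated with ``minimizers'', cannot be right: since $\{\psi_{\textbf{X}}(\textbf{u})-q(q+2)\}^2 = [\alpha_1\alpha_2(1-6\alpha_1\alpha_2)]^2\{z/(1+\alpha_1\alpha_2 z)\}^4$, minimizing the squared excess drives $z$ toward $0$ and picks out directions with \emph{no} separation information. By comparison with Corollary~\ref{cor:alternative_1} and the accompanying text (``maximization is always sufficient''), the intended word is evidently \emph{maximizers}. With that correction your argument becomes straightforward and matches the paper's: for $\alpha_1\notin\mathcal{A}_0$ the bracketed constant is nonzero, so maximizing the squared excess is equivalent, at every constrained stage, to maximizing $z$, which is precisely the optimization identified in the proof of Theorem~\ref{theo:matrix_lda_optimal_mode_wise} as recovering $\textbf{W}_{\mathrm{LDA}}$.
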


\subsection{Comparison to second-order projection methods}

Let $ \kappa_{2, \textbf{X}} $ denote the projection index used in MPCA and $ (\psi_{2, \textbf{X}}, \psi_{2, \textbf{X}'}) $ denote the indices used in (2D)$ ^2 $PCA (when restricting both methods to a single projection-pair), that is,
\[
\kappa_{2, \textbf{X}}(\textbf{u}, \textbf{v}) := \mathrm{E} \left( [ \textbf{u}' \{ \textbf{X} - \mathrm{E}(\textbf{X}) \} \textbf{v} ]^2 \right)
\]
and
\[
\psi_{2, \textbf{X}}(\textbf{u}) := \mathrm{E} \left[ \textbf{u}' \{ \textbf{X} - \mathrm{E}(\textbf{X}) \}  \{ \textbf{X} - \mathrm{E}(\textbf{X}) \}' \textbf{u} \right] \quad \mbox{and} \quad \psi_{2, \textbf{X}'}(\textbf{v}) := \mathrm{E} \left[ \textbf{v}' \{ \textbf{X} - \mathrm{E}(\textbf{X}) \}'  \{ \textbf{X} - \mathrm{E}(\textbf{X}) \} \textbf{v} \right].
\]

The next result shows that, even in the simplest case of rank-1 difference between the group means, $ \kappa_{2, \textbf{X}} $ and $ (\psi_{2, \textbf{X}}, \psi_{2, \textbf{X}'}) $ are able to recover the optimally separating direction under \eqref{eq:matrix_normal_mixture} only when very specific conditions are met. Thus, the leading projections extracted by the second-order methods MPCA and (2D)$ ^2 $PCA may fail to identify the cluster structure and it is more preferable to resort to our proposed fourth moment-based projection pursuit in group separation scenarios. In Theorems \ref{theo:matrix_lda_mpca_2d2pca} and \ref{theo:matrix_lda_mpca} we denote by $\textbf{u}_{\mathrm{LDA}} := \textbf{A}^{-1} \textbf{a}/ \| \textbf{A}^{-1} \textbf{a} \|$ and $\textbf{v}_{\mathrm{LDA}} := \textbf{B}^{-1} \textbf{v}/ \| \textbf{B}^{-1} \textbf{v} \|$ the optimal projection directions (up to scale) under the rank-1 assumption.  The proof of Theorem \ref{theo:matrix_lda_mpca_2d2pca} is omitted as it is exactly analogous to Lemma A.1 in \cite{radojicic2021}. 


\begin{theorem}\label{theo:matrix_lda_mpca_2d2pca}
Assume that model \eqref{eq:matrix_normal_mixture} holds such that $ \textbf{T}_2 - \textbf{T}_1 = \textbf{a} \textbf{b}' $ for some $ \textbf{a} \in \mathbb{R}^p $, $ \textbf{b} \in \mathbb{R}^q $. Then the following two are equivalent:
\begin{itemize}
	\item[i)] The unique unit length maximizers of $\psi_{2,\textbf X}$ are $\pm \textbf{u}_{\mathrm{LDA}}$.
	\item[ii)] The vector $\textbf{a}$ is an eigenvector of $\textbf{A}$ and, letting $\lambda$ stand for the corresponding eigenvalue, the second-to-largest eigenvalue $\phi_2$ of $\mathrm{tr}(\textbf{B}) \textbf{A} + \alpha_1 \alpha_2 \| \textbf{b} \|^2 \textbf{a} \textbf{a}'$ satisfies,
	\begin{align*}
	    \phi_2 < \lambda \{ \mathrm{tr}(\textbf{B}) + \alpha_1 \alpha_2 \textbf{a}' \textbf{A}^{-1} \textbf{a} \| \textbf{b} \|^2 \}.
	\end{align*}
\end{itemize}
\end{theorem}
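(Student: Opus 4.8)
The plan is to reduce the statement to a spectral comparison for an explicit rank-one perturbation of the positive definite matrix $\mathrm{tr}(\textbf{B})\textbf{A}$. First I would put $\psi_{2,\textbf{X}}$ in closed form. Writing $\tilde{\textbf{X}} = \textbf{X} - \mathrm{E}(\textbf{X})$ and using $\mathrm{E}(\textbf{X}) = \alpha_1 \textbf{T}_1 + \alpha_2 \textbf{T}_2$ together with $\textbf{T}_2 - \textbf{T}_1 = \textbf{a}\textbf{b}'$, one has $\textbf{T}_1 - \mathrm{E}(\textbf{X}) = -\alpha_2 \textbf{a}\textbf{b}'$ and $\textbf{T}_2 - \mathrm{E}(\textbf{X}) = \alpha_1 \textbf{a}\textbf{b}'$. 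Conditioning on the mixture component, $\tilde{\textbf{X}}$ equals $\textbf{M}_k + \textbf{A}^{1/2}\textbf{Z}\textbf{B}^{1/2}$ with $\textbf{M}_1 = -\alpha_2 \textbf{a}\textbf{b}'$, $\textbf{M}_2 = \alpha_1 \textbf{a}\textbf{b}'$ and $\textbf{Z}$ having i.i.d.\ standard normal entries. Since the cross terms vanish and $\mathrm{E}(\textbf{Z}\textbf{B}\textbf{Z}') = \mathrm{tr}(\textbf{B})\textbf{I}_p$, the law of total expectation gives $\mathrm{E}(\tilde{\textbf{X}}\tilde{\textbf{X}}') = \mathrm{tr}(\textbf{B})\textbf{A} + (\alpha_1 \alpha_2^2 + \alpha_2\alpha_1^2)\|\textbf{b}\|^2 \textbf{a}\textbf{a}' = \mathrm{tr}(\textbf{B})\textbf{A} + \alpha_1\alpha_2\|\textbf{b}\|^2 \textbf{a}\textbf{a}' =: \textbf{S}$. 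Hence $\psi_{2,\textbf{X}}(\textbf{u}) = \textbf{u}'\textbf{S}\textbf{u}$, so by Rayleigh-quotient theory its unit-length maximizers are exactly the top eigenvectors of $\textbf{S}$, and they form the two-point set $\{\pm \textbf{w}\}$ for a single unit vector $\textbf{w}$ if and only if the largest eigenvalue of $\textbf{S}$ is simple.

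Next I would determine when $\textbf{u}_{\mathrm{LDA}} \propto \textbf{A}^{-1}\textbf{a}$ is an eigenvector of $\textbf{S}$. A direct computation yields $\textbf{S}\textbf{A}^{-1}\textbf{a} = \{\mathrm{tr}(\textbf{B}) + \alpha_1\alpha_2\|\textbf{b}\|^2 \textbf{a}'\textbf{A}^{-1}\textbf{a}\}\textbf{a}$, which is proportional to $\textbf{a}$; therefore $\textbf{A}^{-1}\textbf{a}$ is an eigenvector of $\textbf{S}$ if and only if $\textbf{A}^{-1}\textbf{a} \propto \textbf{a}$, i.e.\ if and only if $\textbf{a}$ is an eigenvector of $\textbf{A}$. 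In that case, writing $\textbf{A}\textbf{a} = \lambda\textbf{a}$, we get $\textbf{u}_{\mathrm{LDA}} = \pm \textbf{a}/\|\textbf{a}\|$ and $\textbf{S}\textbf{a} = \lambda\{\mathrm{tr}(\textbf{B}) + \alpha_1\alpha_2\|\textbf{b}\|^2 \textbf{a}'\textbf{A}^{-1}\textbf{a}\}\textbf{a}$, so the eigenvalue of $\textbf{S}$ attached to $\textbf{a}$ equals the right-hand side of the inequality in (ii).

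Then I would combine the two steps to obtain the equivalence. For (ii) implying (i): under (ii) the previous step shows $\textbf{a}$ is an eigenvector of $\textbf{S}$ with eigenvalue equal to the stated quantity, and the hypothesis $\phi_2 < \lambda\{\mathrm{tr}(\textbf{B}) + \alpha_1\alpha_2\textbf{a}'\textbf{A}^{-1}\textbf{a}\|\textbf{b}\|^2\}$ forces this to be the unique largest eigenvalue of $\textbf{S}$; by the Rayleigh argument the unit-length maximizers of $\psi_{2,\textbf{X}}$ are then $\pm \textbf{a}/\|\textbf{a}\| = \pm\textbf{u}_{\mathrm{LDA}}$. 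For (i) implying (ii): if $\pm\textbf{u}_{\mathrm{LDA}}$ are the unique unit-length maximizers, then the largest eigenvalue of $\textbf{S}$ is simple with eigenvector $\textbf{u}_{\mathrm{LDA}} \propto \textbf{A}^{-1}\textbf{a}$; the eigenvector equation and $\textbf{S}\textbf{A}^{-1}\textbf{a} \propto \textbf{a}$ give $\textbf{A}^{-1}\textbf{a} \propto \textbf{a}$, so $\textbf{a}$ is an eigenvector of $\textbf{A}$, and simplicity of the largest eigenvalue of $\textbf{S}$, which equals $\lambda\{\mathrm{tr}(\textbf{B}) + \alpha_1\alpha_2\textbf{a}'\textbf{A}^{-1}\textbf{a}\|\textbf{b}\|^2\}$ by the previous step, is precisely the inequality in (ii).

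I do not anticipate a genuine obstacle here: the only mildly technical points are the second-moment computation for the matrix-normal mixture (routine once $\mathrm{E}(\textbf{Z}\textbf{B}\textbf{Z}') = \mathrm{tr}(\textbf{B})\textbf{I}_p$ is invoked) and the careful bookkeeping of the equivalence ``unique unit-length maximizer $\Leftrightarrow$ simple largest eigenvalue''. The entire argument runs parallel to Lemma A.1 of \cite{radojicic2021}, with the scalars $\mathrm{tr}(\textbf{B})$ and $\|\textbf{b}\|^2$ taking over the roles played by fixed constants in the vectorial case, which is why the paper omits it.
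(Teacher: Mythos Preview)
Your argument is correct and is precisely the route the paper has in mind: compute $\mathrm{E}(\tilde{\textbf X}\tilde{\textbf X}') = \mathrm{tr}(\textbf{B})\textbf{A} + \alpha_1\alpha_2\|\textbf{b}\|^2\textbf{a}\textbf{a}' = \textbf{S}$, reduce the question to whether $\textbf{u}_{\mathrm{LDA}}\propto\textbf{A}^{-1}\textbf{a}$ is the unique top eigenvector of $\textbf{S}$, and use $\textbf{S}\textbf{A}^{-1}\textbf{a}\propto\textbf{a}$ to obtain the eigenvector condition on $\textbf{a}$ and the eigenvalue inequality. The paper explicitly omits the proof, stating that it is exactly analogous to Lemma~A.1 of \cite{radojicic2021}, which is the template you follow.
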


\begin{theorem}\label{theo:matrix_lda_mpca}
Assume that model \eqref{eq:matrix_normal_mixture} holds such that $ \textbf{T}_2 - \textbf{T}_1 = \textbf{a} \textbf{b}' $ for some $ \textbf{a} \in \mathbb{R}^p $, $ \textbf{b} \in \mathbb{R}^q $. Then,
\begin{itemize}
	\item[i)]  For $(\pm \textbf{u}_{\mathrm{LDA}}, \pm \textbf{v}_{\mathrm{LDA}})$ to be the unique maximizers of $\kappa_{2,\textbf X}$ in $\mathcal{U}_0$, it is necessary for $\textbf a$ and $\textbf b$ to be eigenvectors of $\textbf A$ and $\textbf{B}$ respectively.
	\item[ii)] Assume that $\textbf a$ and $\textbf b$ are eigenvectors of $\textbf A$ and $\textbf B$, respectively, corresponding to the simple eigenvalues $\sigma_{\textbf a}$ and $\lambda_{\textbf{b}}$. Then $(\pm \textbf{u}_{\mathrm{LDA}}, \pm \textbf{v}_{\mathrm{LDA}})$ are the unique maximizers of $\kappa_{2,\textbf X}$ in $\mathcal{U}_0$ if and only if
	\begin{align*}
	    \sigma_{\textbf a}\lambda_{\textbf b} + \alpha_1 \alpha_2||\textbf a||^2||\textbf b||^2>\sigma_1\lambda_1,
	\end{align*}
	where $\sigma_1$ and $\lambda_1$ are the largest eigenvalues of $\textbf{A}$ and $\textbf{B}$, respectively.

\end{itemize}
\end{theorem}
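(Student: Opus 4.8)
The plan is to reduce the statement to an explicit finite-dimensional optimization problem by first computing $\kappa_{2,\textbf{X}}$ in closed form under the rank-1 mixture, and then analysing the maximization of the resulting function on $\mathcal{U}_0$. Conditioning on the mixture component, writing $\textbf{X} = \textbf{T}_k + \textbf{A}^{1/2}\textbf{Z}\textbf{B}^{1/2}$ with $\textbf{Z}$ having i.i.d.\ standard normal entries, and using $\textbf{T}_k - \mathrm{E}(\textbf{X}) \in \{ -\alpha_2 \textbf{a}\textbf{b}', \alpha_1 \textbf{a}\textbf{b}' \}$ together with $\mathrm{Var}\{ \textbf{u}'\textbf{A}^{1/2}\textbf{Z}\textbf{B}^{1/2}\textbf{v} \} = (\textbf{u}'\textbf{A}\textbf{u})(\textbf{v}'\textbf{B}\textbf{v})$, a short calculation should give
\[
\kappa_{2,\textbf{X}}(\textbf{u},\textbf{v}) = (\textbf{u}'\textbf{A}\textbf{u})(\textbf{v}'\textbf{B}\textbf{v}) + \alpha_1\alpha_2 (\textbf{u}'\textbf{a})^2 (\textbf{v}'\textbf{b})^2 .
\]
Everything afterwards is an analysis of this two-variable function (note $\alpha_1\alpha_2>0$).

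For part i) I would use only that a maximizer is a constrained stationary point. If $(\pm\textbf{u}_{\mathrm{LDA}}, \pm\textbf{v}_{\mathrm{LDA}})$ are maximizers, then $\textbf{u}_{\mathrm{LDA}}$ maximizes $\textbf{u}\mapsto\kappa_{2,\textbf{X}}(\textbf{u},\textbf{v}_{\mathrm{LDA}})$ on $\mathbb{S}^{p-1}$, which yields $(\textbf{v}_{\mathrm{LDA}}'\textbf{B}\textbf{v}_{\mathrm{LDA}})\,\textbf{A}\textbf{u}_{\mathrm{LDA}} + \alpha_1\alpha_2(\textbf{v}_{\mathrm{LDA}}'\textbf{b})^2(\textbf{u}_{\mathrm{LDA}}'\textbf{a})\,\textbf{a} = \mu\,\textbf{u}_{\mathrm{LDA}}$ for some $\mu\in\mathbb{R}$. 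Substituting $\textbf{u}_{\mathrm{LDA}}\propto\textbf{A}^{-1}\textbf{a}$ turns the left-hand side into a strictly positive multiple of $\textbf{a}$ (both coefficients are nonnegative, and the $\textbf{v}_{\mathrm{LDA}}'\textbf{B}\textbf{v}_{\mathrm{LDA}}$-term is strictly positive by positive definiteness), while the right-hand side is a multiple of $\textbf{A}^{-1}\textbf{a}$; since $\textbf{a}\neq\textbf{0}$ this forces $\textbf{A}^{-1}\textbf{a}$ to be parallel to $\textbf{a}$, i.e.\ $\textbf{a}$ to be an eigenvector of $\textbf{A}$, and the assertion for $\textbf{b}$ follows by the same argument applied to $\textbf{X}'$.

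For part ii), when $\textbf{a},\textbf{b}$ are eigenvectors with simple eigenvalues $\sigma_{\textbf a},\lambda_{\textbf b}$ we have $\textbf{u}_{\mathrm{LDA}} = \pm\textbf{a}/\|\textbf{a}\|$, $\textbf{v}_{\mathrm{LDA}} = \pm\textbf{b}/\|\textbf{b}\|$, and the target value is $c_1 := \kappa_{2,\textbf{X}}(\textbf{u}_{\mathrm{LDA}},\textbf{v}_{\mathrm{LDA}}) = \sigma_{\textbf a}\lambda_{\textbf b} + \alpha_1\alpha_2\|\textbf{a}\|^2\|\textbf{b}\|^2$. Setting $s := (\textbf{u}'\textbf{a})^2/\|\textbf{a}\|^2$ and $t := (\textbf{v}'\textbf{b})^2/\|\textbf{b}\|^2$ in $[0,1]$ and splitting $\textbf{u}$ into its $\textbf{a}$-component and the orthogonal remainder gives $\textbf{u}'\textbf{A}\textbf{u} \le \sigma_{\textbf a}s + \sigma_1(1-s)$ (and symmetrically for $\textbf{v}$), whence
\[
\kappa_{2,\textbf{X}}(\textbf{u},\textbf{v}) \le F(s,t) := \{\sigma_{\textbf a}s + \sigma_1(1-s)\}\{\lambda_{\textbf b}t + \lambda_1(1-t)\} + \alpha_1\alpha_2\|\textbf{a}\|^2\|\textbf{b}\|^2\, st .
\]
Since $F$ is affine in $s$ for each fixed $t$ and vice versa, its maximum over $[0,1]^2$ is attained at a corner, the corner values being $\sigma_1\lambda_1$, $\sigma_{\textbf a}\lambda_1$, $\sigma_1\lambda_{\textbf b}$ and $c_1$. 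Using the identity $\sigma_1\lambda_1 - \sigma_{\textbf a}\lambda_{\textbf b} = \sigma_1(\lambda_1 - \lambda_{\textbf b}) + \lambda_1(\sigma_1 - \sigma_{\textbf a}) - (\sigma_1 - \sigma_{\textbf a})(\lambda_1 - \lambda_{\textbf b})$ and nonnegativity of the three differences, one checks that the single hypothesis $c_1 > \sigma_1\lambda_1$ already forces $c_1$ to strictly exceed the other three corner values; an elementary argument (each restriction of $F$ to an edge being affine) then shows $(s,t)=(1,1)$ is the unique maximizer of $F$ on $[0,1]^2$, and since $s=t=1$ forces $\textbf{u}=\pm\textbf{a}/\|\textbf{a}\|$, $\textbf{v}=\pm\textbf{b}/\|\textbf{b}\|$, where $\kappa_{2,\textbf{X}}$ does attain $c_1$, this gives the ``if'' direction. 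For the ``only if'' direction I would evaluate $\kappa_{2,\textbf{X}}$ at the pair $(\textbf{e}_1,\textbf{f}_1)$ of leading eigenvectors of $\textbf{A}$ and $\textbf{B}$: its value equals $\sigma_1\lambda_1$ unless $(\textbf{e}_1,\textbf{f}_1) = (\pm\textbf{u}_{\mathrm{LDA}},\pm\textbf{v}_{\mathrm{LDA}})$, so uniqueness of the maximizers forces $c_1 > \sigma_1\lambda_1$ in that case, while if $(\textbf{e}_1,\textbf{f}_1)$ does coincide with the maximizers then $\sigma_{\textbf a}=\sigma_1$, $\lambda_{\textbf b}=\lambda_1$ and $c_1 = \sigma_1\lambda_1 + \alpha_1\alpha_2\|\textbf{a}\|^2\|\textbf{b}\|^2 > \sigma_1\lambda_1$ trivially.

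The main obstacle I anticipate is the bookkeeping in part ii): verifying that the biaffine upper bound $F$ is tight at exactly the LDA directions and, above all, that the lone inequality $c_1 > \sigma_1\lambda_1$ propagates to strict dominance of $c_1$ over the remaining corners and hence to uniqueness of $F$'s maximizer; this is where the structure of the problem really enters. A lesser point to watch is in part i): one must confirm that the coefficient of $\textbf{a}$ on the left-hand side of the stationarity equation is genuinely nonzero (guaranteed by positive definiteness of $\textbf{A}$ and $\textbf{B}$), so that ``$\textbf{a}$ is an eigenvector of $\textbf{A}$'' really follows rather than a degenerate conclusion.
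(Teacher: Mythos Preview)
Your proof is correct. Part~i) is essentially identical to the paper's argument: both compute $\kappa_{2,\textbf{X}}(\textbf{u},\textbf{v}) = (\textbf{u}'\textbf{A}\textbf{u})(\textbf{v}'\textbf{B}\textbf{v}) + \alpha_1\alpha_2(\textbf{u}'\textbf{a})^2(\textbf{v}'\textbf{b})^2$, write out the stationarity (Lagrange) condition at $(\textbf{u}_{\mathrm{LDA}},\textbf{v}_{\mathrm{LDA}})$, substitute $\textbf{u}_{\mathrm{LDA}}\propto\textbf{A}^{-1}\textbf{a}$, and observe that the resulting equation forces $\textbf{A}^{-1}\textbf{a}\parallel\textbf{a}$.

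Part~ii), however, follows a genuinely different route from the paper. The paper expands $\textbf{u}$ and $\textbf{v}$ fully in orthonormal eigenbases of $\textbf{A}$ and $\textbf{B}$, writes $\kappa_{2,\textbf{X}}$ as $\sum_{i,j}\beta_i^2\gamma_j^2\sigma_i\lambda_j + \alpha_1\alpha_2\|\textbf{a}\|^2\|\textbf{b}\|^2\beta_{\textbf a}^2\gamma_{\textbf b}^2$, and then \emph{lifts} the problem to $\mathbb{S}^{pq-1}$ via $z_{ij}=\beta_i\gamma_j$, bounding by the linear program $\tilde\kappa(\textbf z)=\sum z_{ij}^2 c_{ij}\le\max_{i,j}c_{ij}=\max\{\sigma_1\lambda_1,c_1\}$. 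You instead collapse the problem to two scalar variables $s=(\textbf{u}'\textbf{a})^2/\|\textbf{a}\|^2$, $t=(\textbf{v}'\textbf{b})^2/\|\textbf{b}\|^2$ and use the sharp bound $\textbf{u}'\textbf{A}\textbf{u}\le\sigma_{\textbf a}s+\sigma_1(1-s)$ (valid because $\textbf{a}$ is an eigenvector, so the $\textbf{A}$-cross term vanishes), reducing to a biaffine function on $[0,1]^2$ whose maximum sits at a corner. Your approach is more elementary and arguably cleaner, since it never leaves a two-dimensional picture; the paper's lifting trick is slicker once set up but requires introducing the ambient $pq$-dimensional sphere. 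One small simplification in your argument: the identity you quote is unnecessary, since $c_1>\sigma_1\lambda_1\ge\sigma_{\textbf a}\lambda_1$ and $c_1>\sigma_1\lambda_1\ge\sigma_1\lambda_{\textbf b}$ follow directly from $\sigma_1\ge\sigma_{\textbf a}$ and $\lambda_1\ge\lambda_{\textbf b}$.
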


Theorems~\ref{theo:matrix_lda_mpca_2d2pca} and \ref{theo:matrix_lda_mpca} show that for the second-order methods MPCA and (2D)$ ^2 $PCA to recover the optimal LDA direction in the rank-1 case, it is at the minimum necessary for $\textbf{a}$ and $\textbf{b}$ to be eigenvectors of $\bf A$ and $\bf B$, respectively. However, the corresponding eigenvalues do not necessarily have to be the largest ones, but a certain tolerance is allowed, depending both on how well the clusters are separated and on the mixing proportion and the two covariance matrices. 


\section{Large-sample properties}\label{sec:limiting}

Let $ \textbf{X}_1, \ldots, \textbf{X}_n $ be an i.i.d. sample from the model \eqref{eq:matrix_normal_mixture} with $\mathrm{rank}\{ \textbf{A}^{-1/2} (\textbf{T}_2 - \textbf{T}_1) \textbf{B}^{-1/2} \} =: d \leq \max\{p, q\}$. In pursuing the asymptotic properties of the method we make the following assumption.
\begin{assumption}\label{assu:distinct}
The $d$ non-zero singular values of the matrix $ \textbf{W}_{\mathrm{LDA}} = \textbf{A}^{-1/2} (\textbf{T}_2 - \textbf{T}_1) \textbf{B}^{-1/2} $ are distinct.
\end{assumption}
Assumption \ref{assu:distinct} is made for theoretical convenience. Namely, it ensures that each sequential optimizer is for both indices unique (up to sign), enabling us to approach the problem progressively by establishing the limiting properties of each sequential optimizer one-by-one, finally culminating in the construction of $\textbf{W}_{\mathrm{LDA}}$. In contrast, without Assumption \ref{assu:distinct} the derivation of the limiting properties would be significantly more difficult as, in the worst-case scenario, none of the population-level optimizers would be unique, the only-well defined part of the process being the matrix $\textbf{W}_{\mathrm{LDA}}$, which we would then have to target directly.


The sample versions of the two indices are,
\begin{align*}
\kappa_{n\textbf{X}}(\textbf{u}, \textbf{v}) := \frac{\frac{1}{n} \sum_{i=1}^n \left[ \{ \textbf{u}' ( \textbf{X}_i - \bar{\textbf{X}} ) \textbf{v} \}^4 \right] }{ \left( \frac{1}{n} \sum_{i=1}^n \left[ \{ \textbf{u}' ( \textbf{X}_i - \bar{\textbf{X}} ) \textbf{v} \}^2 \right] \right)^2},
\end{align*}
and
\begin{align*}
\psi_{n\textbf{X}}( \textbf{u} ) := \frac{1}{n} \sum_{i=1}^n\left( \left[ \textbf{u}' ( \textbf{X}_i - \bar{\textbf{X}} ) \left\{ \frac{1}{n} \sum_{i=1}^n ( \textbf{X}_i - \bar{\textbf{X}} )' \textbf{u} \textbf{u}' ( \textbf{X}_i - \bar{\textbf{X}} ) \right\}^{-1} ( \textbf{X}_i - \bar{\textbf{X}} )' \textbf{u} \right]^2  \right),
\end{align*}
and similarly for $\psi_{n\textbf{X}'}( \textbf{v} )$. Furthermore, given a fixed $n \in \mathbb{N}$ and a fixed collection of pairs $ (\textbf{u}_{n1}, \textbf{v}_{n1}), \ldots , (\textbf{u}_{nd}, \textbf{v}_{nd}) \in \mathcal{U}_0 $, the sample versions of the orthogonality constraint sets are $\mathcal{G}_{n1,\textbf{X}} := \{ \textbf{G}_{n11,\textbf{X}}, \ldots , \textbf{G}_{n1(d-1),\textbf{X}} \}$ and $\mathcal{G}_{n2,\textbf{X}} = \{ \textbf{G}_{n21,\textbf{X}}, \ldots , \textbf{G}_{n2(d-1),\textbf{X}} \} $, where
\[ 
	\textbf{G}_{n1k,\textbf{X}} := \frac{1}{n} \sum_{i=1}^n ( \textbf{X}_i - \bar{\textbf{X}} ) \textbf{v}_{nk} \textbf{v}_{nk}' ( \textbf{X}_i - \bar{\textbf{X}} )',
	\]
	and
	\[ 
	\textbf{G}_{n2k,\textbf{X}} := \frac{1}{n} \sum_{i=1}^n ( \textbf{X}_i - \bar{\textbf{X}} )' \textbf{u}_{nk} \textbf{u}_{nk}' ( \textbf{X}_i - \bar{\textbf{X}} ),
	\]
for $ k = 1, \ldots , d - 1$.

Assuming, without loss of generality, that $\alpha \in \mathcal{A}_{\mathrm{max}}$ (the opposite choice leads to minimization instead of maximization, and is treated analogously), we begin by establishing the strong consistency of sequences of $(\mathcal{G}_{n1,\textbf{X}}, \mathcal{G}_{n2,\textbf{X}})$-maximizers of $ \kappa_{n\textbf{X}} $ and $ (\psi_{n \textbf{X}}, \psi_{n \textbf{X}'})$. In the following, let $(\textbf{u}_{01}, \textbf{v}_{01}), \ldots , (\textbf{u}_{0d}, \textbf{v}_{0d}) $ denote any collection of first $d$ singular pairs of the matrix $ \textbf{A}^{-1/2} (\textbf{T}_2 - \textbf{T}_1) \textbf{B}^{-1/2} $ (which, by Assumption \ref{assu:distinct}, are unique up to sign). 

\begin{theorem}\label{theo:strong_consistency}
    Let $ (\textbf{u}_{n1}, \textbf{v}_{n1}), \ldots , (\textbf{u}_{nd}, \textbf{v}_{nd}) \in \mathcal{U}_0 $ be any sequence of $(\mathcal{G}_{n1,\textbf{X}}, \mathcal{G}_{n2,\textbf{X}})$-maximizers of $ \kappa_{n\textbf{X}} $ or $(\psi_{n \textbf{X}}, \psi_{n \textbf{X}'})$. Then there exists sequences of signs $s_{n{u}1}, \ldots , s_{n{u}d} \in \{ -1, 1 \}$ and $s_{n{v}1}, \ldots , s_{n{v}d} \in \{ -1, 1 \}$ such that
    \begin{align*}
        s_{n{u}j} \textbf{u}_{nj} \rightarrow \textbf{u}_j := \frac{\textbf{A}^{-1/2} \textbf{u}_{0j}}{\|\textbf{A}^{-1/2} \textbf{u}_{0j}\|} \quad \mbox{and} \quad s_{n{v}j} \textbf{v}_{nj} \rightarrow \textbf{v}_j := \frac{\textbf{B}^{-1/2} \textbf{v}_{0j}}{\|\textbf{B}^{-1/2} \textbf{v}_{0j}\|}
    \end{align*}
    almost surely, for all $j = 1, \ldots , d$.
\end{theorem}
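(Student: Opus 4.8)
The plan is to establish strong consistency sequentially, one pair of directions at a time, using the population-level characterization from Theorem~\ref{theo:matrix_lda_optimal_higher_rank} (resp. Theorem~\ref{theo:matrix_lda_optimal_mode_wise}) together with a standard ``argmax consistency'' argument. First I would record the almost-sure uniform convergence of all the sample quantities to their population analogues: by the strong law of large numbers and finiteness of fourth moments, $\bar{\textbf{X}} \to \mathrm{E}(\textbf{X})$, the empirical fourth-moment tensor converges, and hence $\kappa_{n\textbf{X}} \to \kappa_{\textbf{X}}$ (resp. $\psi_{n\textbf{X}} \to \psi_{\textbf{X}}$) uniformly on the compact set $\mathcal{U}_0$ (resp. $\mathbb{S}^{p-1}$), almost surely — here one uses that $\mathcal{U}_0$ is compact, the denominators are bounded away from zero on $\mathcal{U}_0$ by the model assumption (and by continuity the empirical denominators are eventually bounded away from zero uniformly), so the ratio is a uniformly continuous function of finitely many sample moments. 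The same argument gives $\textbf{G}_{n1k,\textbf{X}} \to \textbf{G}_{1k,\textbf{X}}$ and $\textbf{G}_{n2k,\textbf{X}} \to \textbf{G}_{2k,\textbf{X}}$ a.s.

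Next I would run an induction on $j = 1, \ldots, d$. For $j=1$: Assumption~\ref{assu:distinct} guarantees (via Theorem~\ref{theo:matrix_lda_optimal_higher_rank} and the accompanying discussion of how the decomposition relates to the SVD of $\textbf{A}^{1/2}\textbf{W}_{\mathrm{LDA}}\textbf{B}^{1/2}$) that the population maximizer $(\textbf{u}_1, \textbf{v}_1)$ is unique up to the joint sign flip $(\textbf{u}_1,\textbf{v}_1)\mapsto(-\textbf{u}_1,-\textbf{v}_1)$ — because the leading singular value of $\textbf{A}^{1/2}\textbf{W}_{\mathrm{LDA}}\textbf{B}^{1/2}$, equivalently of $\textbf{R}=\textbf{A}^{1/2}\textbf{W}_{\mathrm{LDA}}\textbf{B}^{1/2}$, is simple. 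A Wald-type / compactness argument then yields: any sequence of sample maximizers $(\textbf{u}_{n1},\textbf{v}_{n1})$ has every subsequential limit equal to $\pm(\textbf{u}_1,\textbf{v}_1)$, hence, after choosing signs $s_{nu1}, s_{nv1}$ appropriately, $s_{nu1}\textbf{u}_{n1}\to\textbf{u}_1$ and $s_{nv1}\textbf{v}_{n1}\to\textbf{v}_1$ a.s. For the inductive step at stage $j$, assume $s_{nuk}\textbf{u}_{nk}\to\textbf{u}_k$ and $s_{nvk}\textbf{v}_{nk}\to\textbf{v}_k$ for $k<j$. Then the empirical constraint matrices $\textbf{G}_{n1k,\textbf{X}}$ (which depend on $\textbf{v}_{nk}$, but only through $\textbf{v}_{nk}\textbf{v}_{nk}'$, so the sign ambiguity is harmless) converge a.s. to $\textbf{G}_{1k,\textbf{X}}$, and likewise for $\textbf{G}_{n2k,\textbf{X}}$. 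One then shows that the sample-constrained optimization converges to the population-constrained optimization: the feasible sets $\{(\textbf{u},\textbf{v}) : \textbf{u}'\textbf{G}_{n1k,\textbf{X}}\textbf{u}_{nk}=0,\ \textbf{v}'\textbf{G}_{n2k,\textbf{X}}\textbf{v}_{nk}=0,\ k<j\}$ converge (in the Painlevé–Kuratowski / Hausdorff sense on the compact $\mathcal{U}_0$) to the population feasible set, because the defining linear functionals converge uniformly and — crucially — the population feasible constraints are non-degenerate (the vectors $\textbf{G}_{1k,\textbf{X}}\textbf{u}_k$ are nonzero, so the zero-sets are genuine great subspheres and small perturbations do not collapse them). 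Combining uniform convergence of the objective with set convergence of the constraints, plus the fact that the population constrained maximizer $(\textbf{u}_j,\textbf{v}_j)$ is again unique up to a joint sign (Assumption~\ref{assu:distinct} making the $j$-th singular value of $\textbf{R}$ simple), gives the desired a.s. convergence $s_{nuj}\textbf{u}_{nj}\to\textbf{u}_j$, $s_{nvj}\textbf{v}_{nj}\to\textbf{v}_j$. The argument for $(\psi_{n\textbf{X}},\psi_{n\textbf{X}'})$ is the same once one additionally checks that $\{\frac{1}{n}\sum_i(\textbf{X}_i-\bar{\textbf{X}})'\textbf{u}(\textbf{X}_i-\bar{\textbf{X}})\textbf{u}\ \ \text{type matrices}\}$ are eventually invertible uniformly in $\textbf{u}$ (again by uniform convergence to the population matrix, which is invertible by the standing assumption), so $\psi_{n\textbf{X}}$ is eventually well-defined and uniformly close to $\psi_{\textbf{X}}$.

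The main obstacle I expect is handling the \emph{data-dependent constraint sets} rigorously: unlike classical M-estimation where the parameter space is fixed, here at stage $j$ the sample optimizer lives in a random feasible set $\mathcal{C}_{nj}$ that depends on the earlier (random) optimizers, and one must show that $\arg\max$ over $\mathcal{C}_{nj}$ converges to $\arg\max$ over the population set $\mathcal{C}_j$. The clean way is to prove a lemma of the form: if $h_n\to h$ uniformly on a compact set $K$, $C_n\to C$ in the Hausdorff metric with $C_n, C\subseteq K$ closed and nonempty, and $\arg\max_{C} h = \{x^*\}$ is a singleton, then $\arg\max_{C_n} h_n \to x^*$; the nontrivial hypothesis to verify in our setting is $C_n\to C$, which needs the non-degeneracy of the population constraint functionals (so that nearby linear constraints cut out nearby subspheres rather than jumping) — this in turn follows from the explicit form of $\textbf{G}_{1k,\textbf{X}}, \textbf{G}_{2k,\textbf{X}}$ under the matrix-normal-mixture model and the fact that the relevant singular values are positive. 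A secondary, more bookkeeping-level obstacle is the sign management: one must track that flipping the sign of $\textbf{u}_{nk}$ or $\textbf{v}_{nk}$ leaves $\textbf{G}_{n1k,\textbf{X}}$ and $\textbf{G}_{n2k,\textbf{X}}$ (and the constraints they induce) unchanged, so that the sign choices at earlier stages do not interfere with later stages, and that the final claimed limits $\textbf{u}_j = \textbf{A}^{-1/2}\textbf{u}_{0j}/\|\textbf{A}^{-1/2}\textbf{u}_{0j}\|$ indeed coincide with the population optimizers identified in Theorems~\ref{theo:matrix_lda_optimal_higher_rank}–\ref{theo:matrix_lda_optimal_mode_wise} (this identification is exactly the ``SVD of $\textbf{A}^{1/2}\textbf{W}_{\mathrm{LDA}}\textbf{B}^{1/2}$'' remark made after Theorem~\ref{theo:matrix_lda_optimal_higher_rank}).
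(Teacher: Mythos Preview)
Your proposal is correct and follows essentially the same strategy as the paper: establish almost-sure uniform convergence of the sample objective functions to their population counterparts, then proceed by induction on $j$, using at each stage the uniqueness (up to signs, via Assumption~\ref{assu:distinct}) of the population constrained optimizer together with an argmax-consistency argument.

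The one genuine technical difference is in how the moving, data-dependent constraint sets are handled. You propose to prove a general lemma of the form ``$h_n \rightrightarrows h$ on compact $K$, $C_n \to C$ in the Hausdorff metric, $\arg\max_C h$ a singleton $\Rightarrow$ $\arg\max_{C_n} h_n \to \arg\max_C h$'' and then verify Hausdorff convergence of the sample feasible sets to the population one using non-degeneracy of the population constraint functionals. The paper instead avoids set convergence entirely by \emph{parametrizing} the constraint sets explicitly: it completes $\{\textbf{G}_{n1k,\textbf{X}}\textbf{u}_{nk}\}_{k<j}$ to a basis, applies Gram--Schmidt to obtain an orthonormal $\textbf{K}_{n1}$ spanning the sample orthogonal complement (and similarly $\textbf{K}_{n2}$), and then writes the stage-$j$ constrained problem as an \emph{unconstrained} problem on the fixed compact $\mathbb{S}^{p-j}\times\mathbb{S}^{q-j}$ via the maps $g_n(\textbf{s},\textbf{t}) = \textbf{J}_n(\textbf{s}',\textbf{t}')'$. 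Since $\textbf{J}_n \to \textbf{J}$ and Gram--Schmidt is continuous in its inputs, $g_n \rightrightarrows g$, and a short composition lemma ($f_n \rightrightarrows f$, $g_n \rightrightarrows g$, $f$ continuous $\Rightarrow f_n\circ g_n \rightrightarrows f\circ g$) reduces everything to the unconstrained argmax lemma. Your route is slightly more abstract; the paper's is slightly more constructive. Both require exactly the same non-degeneracy ingredient you identified (the population constraint vectors $\textbf{G}_{1k,\textbf{X}}\textbf{u}_k$ are linearly independent), which the paper verifies using their explicit form under the model.

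One small correction: you write that the population first-stage maximizer is unique ``up to the joint sign flip $(\textbf{u}_1,\textbf{v}_1)\mapsto(-\textbf{u}_1,-\textbf{v}_1)$''. In fact $\kappa_{\textbf{X}}$ is invariant under \emph{independent} sign flips of $\textbf{u}$ and $\textbf{v}$, so there are four maximizers in $\mathcal{U}_0$. This is harmless for the argument (you already allow independent signs $s_{nuj}, s_{nvj}$), but the paper handles it by restricting to the set $\mathcal{U}_1\subset\mathcal{U}_0$ where the first nonzero entries of $\textbf{u}$ and $\textbf{v}$ are both positive, thereby forcing a unique representative.
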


The proof of Theorem \ref{theo:strong_consistency} is rather general and in no way tied to the current distribution of $\textbf{X}$ (mixture of matrix normals). The key requirements are simply that the population level sequence of $(\mathcal{G}_{1,\textbf{X}}, \mathcal{G}_{2,\textbf{X}})$-maximizers of the objective function is unique up to signs and that none of the successive orthogonality constraints imposed by $\mathcal{G}_{1,\textbf{X}}$ and $\mathcal{G}_{2,\textbf{X}}$ are implied by the earlier ones. 



Focus next on $\kappa_{n \textbf{X}}$ and define the sample counterparts of the quantities introduced in Theorems \ref{theo:matrix_lda_optimal_higher_rank} and \ref{theo:matrix_lda_optimal_mode_wise} as
\begin{align*}
     \lambda_{nj} := \sqrt{ \max \left\{ \frac{\theta_{nj}}{1 - \alpha_1 \alpha_2 \theta_{nj}}, 0 \right\} } \quad \mbox{and} \quad \theta_{nj} := \sqrt{  \max \left\{ \frac{\kappa_{n\textbf{X}}(\textbf{u}_{nj}, \textbf{v}_{nj}) - 3}{\alpha_1 \alpha_2 (1 - 6 \alpha_1 \alpha_2)}, 0 \right\} }
\end{align*}
Furthermore, denote $z_{njk} = (1/n) \sum_{i=1}^n \{ \textbf{u}_{nj}' ( \textbf{X}_i - \bar{\textbf{X}} ) \textbf{v}_{nj} \}^k$. Theorem \ref{theo:strong_consistency} now readily implies the existence of a strongly consistent estimator of the optimal projection $\textbf{W}_{\mathrm{LDA}}$.

\begin{corollary}\label{cor:strong_consistency}
     Let $ (\textbf{u}_{n1}, \textbf{v}_{n1}), \ldots , (\textbf{u}_{nd}, \textbf{v}_{nd}) \in \mathcal{U}_0 $ be any sequence of $(\mathcal{G}_{n1,\textbf{X}}, \mathcal{G}_{n2,\textbf{X}})$-maximizers of $ \kappa_{n\textbf{X}} $. Then
     	\begin{equation}\label{eq:Wlda_consistency_kappa}
		\textbf{W}_{n\mathrm{LDA}} := \sum_{j=1}^d s_{nj} z_{nj2}^{-1/2} \lambda_{nj} \sqrt{1 + \alpha_1 \alpha_2 \lambda_{nj}^2} \textbf{u}_j \textbf{v}_j' \rightarrow \textbf{W}_{\mathrm{LDA}},
\end{equation}
		almost surely, where $ s_{n1}, \ldots, s_{nd} \in \{ -1, 0, 1 \}$ are the signs of the quantities $(\alpha_1 - \alpha_2)^{-1} z_{nj3}$. 
\end{corollary}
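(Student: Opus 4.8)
The plan is to derive the result from Theorem~\ref{theo:strong_consistency} by a continuous-mapping argument, the only genuinely delicate point being the bookkeeping of signs. Work on the almost sure event on which the conclusion of Theorem~\ref{theo:strong_consistency} holds, and fix there signs $s_{nuj}, s_{nvj} \in \{-1, 1\}$, $j = 1, \ldots, d$, with $\hat{\textbf{u}}_{nj} := s_{nuj}\textbf{u}_{nj} \to \textbf{u}_j$ and $\hat{\textbf{v}}_{nj} := s_{nvj}\textbf{v}_{nj} \to \textbf{v}_j$, where $\textbf{u}_j, \textbf{v}_j$ are the normalized versions of $\textbf{A}^{-1/2}\textbf{u}_{0j}$, $\textbf{B}^{-1/2}\textbf{v}_{0j}$ from the statement of Theorem~\ref{theo:strong_consistency}; under Assumption~\ref{assu:distinct} these $(\textbf{u}_1, \textbf{v}_1), \ldots, (\textbf{u}_d, \textbf{v}_d)$ form the population sequence of $(\mathcal{G}_{1,\textbf{X}}, \mathcal{G}_{2,\textbf{X}})$-maximizers of $\kappa_{\textbf{X}}$, unique up to signs. (I read the $j$-th summand of $\textbf{W}_{n\mathrm{LDA}}$ as involving the sample optimizers $\textbf{u}_{nj}\textbf{v}_{nj}'$, so that $\textbf{W}_{n\mathrm{LDA}}$ is genuinely a statistic.)

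First I would prove the uniform strong law $\sup_{(\textbf{u},\textbf{v}) \in \mathcal{U}_0}\, |\tfrac{1}{n}\sum_{i=1}^n \{\textbf{u}'(\textbf{X}_i - \bar{\textbf{X}})\textbf{v}\}^k - \mathrm{E}([\textbf{u}'\{\textbf{X} - \mathrm{E}(\textbf{X})\}\textbf{v}]^k)| \to 0$ a.s.\ for $k = 2, 3, 4$: each summand is a polynomial of degree $k$ in the entries of $\textbf{X}_i$ with coefficients polynomial in $\bar{\textbf{X}}, \textbf{u}, \textbf{v}$, so the claim follows from the SLLN applied to the finitely many empirical moments of $\textbf{X}$ of order at most $4$, from $\bar{\textbf{X}} \to \mathrm{E}(\textbf{X})$ a.s., and from compactness of $\mathcal{U}_0$. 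Under model~\eqref{eq:matrix_normal_mixture} the positivity hypothesis of Lemma~\ref{lem:existence} holds ($\textbf{A}, \textbf{B}$ positive definite), so $(\textbf{u},\textbf{v}) \mapsto \mathrm{E}([\textbf{u}'\{\textbf{X}-\mathrm{E}(\textbf{X})\}\textbf{v}]^2)$ is continuous and strictly positive on the compact $\mathcal{U}_0$; hence the denominator of $\kappa_{n\textbf{X}}$ is uniformly bounded away from $0$ for all large $n$, and $\sup_{\mathcal{U}_0} |\kappa_{n\textbf{X}} - \kappa_{\textbf{X}}| \to 0$ a.s. Combining these uniform limits with $\hat{\textbf{u}}_{nj} \to \textbf{u}_j$, $\hat{\textbf{v}}_{nj} \to \textbf{v}_j$ and the invariance of $\kappa_{n\textbf{X}}$ and of even-order empirical moments under sign changes of the arguments gives, for each $j$, that $\kappa_{n\textbf{X}}(\textbf{u}_{nj}, \textbf{v}_{nj}) \to \kappa_{\textbf{X}}(\textbf{u}_j, \textbf{v}_j)$, $z_{nj2} \to m_{2j} := \mathrm{E}([\textbf{u}_j'\{\textbf{X}-\mathrm{E}(\textbf{X})\}\textbf{v}_j]^2) > 0$, and $w_{nj} := \tfrac{1}{n}\sum_{i=1}^n \{\hat{\textbf{u}}_{nj}'(\textbf{X}_i - \bar{\textbf{X}})\hat{\textbf{v}}_{nj}\}^3 \to m_{3j} := \mathrm{E}([\textbf{u}_j'\{\textbf{X}-\mathrm{E}(\textbf{X})\}\textbf{v}_j]^3)$. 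Since $x \mapsto \sqrt{\max\{x, 0\}}$ is continuous and $x \mapsto x/(1 - \alpha_1\alpha_2 x)$ is continuous at $x = \theta_j$ (where $1 - \alpha_1\alpha_2\theta_j > 0$ on the population level, as forced by $\textbf{W}_{\mathrm{LDA}} \neq \textbf{0}$ in Theorem~\ref{theo:matrix_lda_optimal_higher_rank}), it follows that $\theta_{nj} \to \theta_j$ and $\lambda_{nj} \to \lambda_j$, the population quantities of Theorem~\ref{theo:matrix_lda_optimal_higher_rank}.

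The sign bookkeeping, which I expect to be the one genuinely subtle step, goes as follows. From $\textbf{u}_{nj} = s_{nuj}\hat{\textbf{u}}_{nj}$, $\textbf{v}_{nj} = s_{nvj}\hat{\textbf{v}}_{nj}$ and $(s_{nuj}s_{nvj})^3 = s_{nuj}s_{nvj}$ one gets $z_{nj3} = s_{nuj}s_{nvj}\, w_{nj}$, so $s_{nj} = \mathrm{sign}[(\alpha_1 - \alpha_2)^{-1} z_{nj3}] = s_{nuj}s_{nvj}\, \mathrm{sign}[(\alpha_1 - \alpha_2)^{-1} w_{nj}]$. Since $\alpha_1 \in \mathcal{A}_{\mathrm{max}}$ entails $\alpha_1 \neq \alpha_2$, the computation preceding Lemma~\ref{lemma:lemma3} shows $m_{3j} = \alpha_2 \mu_{2j}(\mu_{2j} - \mu_{1j})(\mu_{2j} + \mu_{1j}) \neq 0$ for every $j \leq d$ (with $\mu_{kj}$ the projected group means; the three factors are nonzero because $\mu_{2j} \neq \mu_{1j}$ along a singular pair of $\textbf{A}^{1/2}\textbf{W}_{\mathrm{LDA}}\textbf{B}^{1/2}$ with positive singular value, which with $\alpha_1\mu_{1j} + \alpha_2\mu_{2j} = 0$ and $\alpha_1 \neq \alpha_2$ forces $\mu_{2j} \neq 0$ and $\mu_{1j} + \mu_{2j} \neq 0$). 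Hence $\mathrm{sign}[(\alpha_1 - \alpha_2)^{-1} w_{nj}] \to \mathrm{sign}[(\alpha_1 - \alpha_2)^{-1} m_{3j}] = s_j$, and being $\{-1, 1\}$-valued it equals $s_j$ for all large $n$; thus $s_{nj} = s_{nuj} s_{nvj} s_j$ eventually.

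Finally I would assemble the pieces. For all large $n$, using $s_{nj} = s_{nuj}s_{nvj}s_j$, $\textbf{u}_{nj}\textbf{v}_{nj}' = s_{nuj}s_{nvj}\,\hat{\textbf{u}}_{nj}\hat{\textbf{v}}_{nj}'$ and $(s_{nuj}s_{nvj})^2 = 1$, the $j$-th summand of $\textbf{W}_{n\mathrm{LDA}}$ equals $s_j\, z_{nj2}^{-1/2}\, \lambda_{nj}\sqrt{1 + \alpha_1\alpha_2\lambda_{nj}^2}\; \hat{\textbf{u}}_{nj}\hat{\textbf{v}}_{nj}'$, which by the limits above converges a.s.\ to $s_j\, m_{2j}^{-1/2}\, \lambda_j\sqrt{1 + \alpha_1\alpha_2\lambda_j^2}\; \textbf{u}_j\textbf{v}_j'$. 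Summing the finitely many ($j = 1, \ldots, d$) a.s.-convergent summands and invoking part~(iii) of Theorem~\ref{theo:matrix_lda_optimal_higher_rank} applied to the population maximizer sequence $(\textbf{u}_1, \textbf{v}_1), \ldots, (\textbf{u}_d, \textbf{v}_d)$ yields $\textbf{W}_{n\mathrm{LDA}} \to \sum_{j=1}^d (s_j \lambda_j \sqrt{1 + \alpha_1\alpha_2\lambda_j^2}/\sqrt{m_{2j}})\, \textbf{u}_j\textbf{v}_j' = \textbf{W}_{\mathrm{LDA}}$ almost surely, as claimed. The main obstacle is thus the sign bookkeeping of the previous paragraph --- showing that the $n$-dependent sign ambiguities inherited from Theorem~\ref{theo:strong_consistency} cancel in pairs once combined with the data-driven signs $s_{nj}$; everything else reduces to a uniform SLLN together with the continuous mapping theorem.
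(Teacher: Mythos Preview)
Your proposal is correct and follows essentially the same approach as the paper: invoke the uniform convergence of $\kappa_{n\textbf{X}}$ (the paper's Lemma on uniform convergence) together with Theorem~\ref{theo:strong_consistency} to get $\kappa_{n\textbf{X}}(\textbf{u}_{nj},\textbf{v}_{nj})\to\kappa_\textbf{X}(\textbf{u}_j,\textbf{v}_j)$ and hence $\lambda_{nj}\to\lambda_j$, use the (uniform) SLLN to get $z_{nj2}\to m_{2j}$ and $s_{nuj}s_{nvj}z_{nj3}\to m_{3j}$, deduce $s_{nuj}s_{nvj}s_{nj}\to s_j$, and assemble via Theorem~\ref{theo:matrix_lda_optimal_higher_rank}. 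Your write-up is in fact more explicit than the paper's on two points the paper glosses over --- the uniform SLLN needed because $z_{njk}$ is evaluated at the random $(\textbf{u}_{nj},\textbf{v}_{nj})$, and the verification that $m_{3j}\neq 0$ so the sign convergence is eventually exact.
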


A result equivalent to Corollary \ref{cor:strong_consistency} holds also for $\psi_\textbf{X}$ and is proven in the same manner, after changing the constant 3 in the definition of $\theta_{nj}$ above into $q(q+2)$.

\section{Algorithms}\label{sec:algorithm}
As described in Section \ref{sec:intro}, the optimization of the mode-wise index turned out to be unexpectedly computationally demanding. 
Hence, from here onward, we restrict our attention to the simultaneous index $\kappa_{n\textbf{X}}$ only, and for optimizing it we present two approaches. The first one is based on gradient descent with Barzilai-Borwein step size~\citep{Barzilai1988}. Gradient descent being a method of local optimization, we re-initialize a predefined number of times, and the most optimal candidate is then taken as the solution, see Algorithm \ref{alg::kappa_unconst}. The expression for the gradient of  $\kappa_{n\textbf{X}}$ that is needed in the algorithm can be found in the proof of Lemma \ref{lemma:lemma3}.

Both Algorithms \ref{alg::kappa_unconst} and \ref{alg::kappa_fixed_point} use a similar strategy for obtaining the successive optimizers after the first one. Namely, once the first $k-1$ pairs of $(\mathcal{G}_{n1,\textbf{X}},\mathcal{G}_{n2,\textbf{X}})$-optimizers $(\textbf{u}_1,\textbf{v}_1),\dots,(\textbf{u}_{k-1},\textbf{v}_{k-1})$ are found (regardless of the approach), the $k$th pair  $(\textbf{u}_k,\textbf{v}_k)$ is of the form $(\textbf{G}_{\textbf{u},k}^{\perp}\textbf{u},\textbf{G}_{\textbf{v},k}^{\perp}\textbf{v})$, for some $\textbf{u}\in\mathbb{R}^{p-{k-1}}$, $\textbf{v}\in\mathbb{R}^{q-{k-1}}$, where $\textbf{G}_{\textbf{u},k}^{\perp}$ and $\textbf{G}_{\textbf{v},k}^{\perp}$ are \textcolor{black}{arbitrary bases for the} orthogonal complements of $(\textbf{G}_{n11,\textbf{X}}\textbf{u}_1,\dots,\textbf{G}_{n1(k-1), \textbf{X}}\textbf{u}_{k-1})$ and $(\textbf{G}_{n21,\textbf{X}}\textbf{v}_1,\dots,\textbf{G}_{n2(k-1), \textbf{X}}\textbf{v}_{k-1})$, respectively. 

\begin{algorithm}[t]
\caption{Gradient-based algorithm for the optimization of $\kappa_{n\textbf{X}}$.}\label{alg::kappa_unconst}
\SetKwInOut{Input}{Input}
        \Input{$\textbf{X}_1,\dots \textbf{X}_n\in\mathbb{R}^{p \times q}$ centered observations;}
        \BlankLine
	Initialize $\textbf{u}_0$, $\|\textbf{u}_0\|=1$, $\textbf{v}_0$, $\|\textbf{v}_0\|=1$;\\
 	Set the tolerance $\varepsilon>0$ and $e=\varepsilon+1$;\\
 	Set the number of initializations $r>0$;\\
 	Initialize step size $\gamma>0$;\\
 	Initialize $\textbf{G}_{\textbf{u}}^{\perp}\leftarrow\textbf{I}_p$, $\textbf{G}_{\textbf{v}}^{\perp}\leftarrow\textbf{I}_q$;\\ 
    	
	\While{$\min\{p,q\}>1$}{
	$p\leftarrow p-1$; $q\leftarrow q-1$;\\
     \For{$i=1$; $i\leq r$; $i++$}{
	Initialize $\kappa_{min}\leftarrow 100000$;\\
	Calculate $\nabla\kappa_{n(\textbf{G}_{\textbf{u}}^{\perp})'\textbf{X}\textbf{G}_{\textbf{v}}^{\perp}}(\textbf{u}_0,\textbf{v}_0)$;\\

    \While{$e>\varepsilon$}{

	    $(\textbf{u}_1,\textbf{v}_1)\leftarrow (\textbf{u}_0,\textbf{v}_0)-\gamma\nabla\kappa_{n,\textbf{X}}(\textbf{u}_0,\textbf{v}_0)$;\\
	    
	    Calculate $\nabla\kappa_{n(\textbf{G}_{\textbf{u}}^{\perp})'\textbf{X}\textbf{G}_{\textbf{v}}^{\perp}}(\textbf{u}_1,\textbf{v}_1)$;\\
	    $\displaystyle\gamma\leftarrow\frac{|(\textbf{u}_1-\textbf{u}_0,\textbf{v}_1-\textbf{v}_0)'(\nabla\kappa_{n(\textbf{G}_{\textbf{u}}^{\perp})'\textbf{X}\textbf{G}_{\textbf{v}}^{\perp}}(\textbf{u}_1,\textbf{v}_1)-\nabla\kappa_{n(\textbf{G}_{\textbf{u}}^{\perp})'\textbf{X}\textbf{G}_{\textbf{v}}^{\perp}}(\textbf{u}_0,\textbf{v}_0))|}{\|\nabla\kappa_{n\textbf{X}}(\textbf{u}_1,\textbf{v}_1)-\nabla\kappa_{n\textbf{X}}(\textbf{u}_0,\textbf{v}_0)\|^{-2}}$;\\
	    
	    $e\leftarrow\|\nabla\kappa_{n(\textbf{G}_{\textbf{u}}^{\perp})'\textbf{X}\textbf{G}_{\textbf{v}}^{\perp}}(\textbf{u}_1,\textbf{v}_1)\|$;\\
	    $(\textbf{u}_0,\textbf{v}_0)\leftarrow(\textbf{u}_1,\textbf{v}_1)$;
	    }
	   Calculate $\kappa_{n(\textbf{G}_{\textbf{u}}^{\perp})'\textbf{X}\textbf{G}_{\textbf{v}}^{\perp}}(\textbf{u}_1,\textbf{v}_1)$;\\
	    \If{$\kappa_{n(\textbf{G}_{\textbf{u}}^{\perp})'\textbf{X}\textbf{G}_{\textbf{v}}^{\perp}}(\textbf{u}_1,\textbf{v}_1)<\kappa_{min}$}{
	        $\kappa_{min}\leftarrow\kappa_{n,(\textbf{G}_{\textbf{u}}^{\perp})'\textbf{X}\textbf{G}_{\textbf{v}}^{\perp}}(\textbf{u}_1,\textbf{v}_1)$;\\
	        $(\textbf{u},\textbf{v})\leftarrow(\textbf{u}_1,\textbf{v}_1)$;
	        }
	         }
	$(\textbf{u}_1,\textbf{v}_1)\leftarrow(\textbf{u},\textbf{v})$;\\    
	
    Append $\textbf{U}\leftarrow[\textbf{U},\textbf{G}_{\textbf{u}}^{\perp}\textbf{u}_1]$,  $\textbf{V}\leftarrow[\textbf{V},\textbf{G}_{\textbf{v}}^{\perp}\textbf{v}_1]$;\\
    
   Append $\textbf{G}_\textbf{u}\leftarrow[\textbf{G}_\textbf{u},\frac{1}{n}\sum_{i=1}^n\textbf{u}_1'\textbf{X}_i\textbf{v}_1 \cdot \textbf{X}_i\textbf{v}_1]$, $\textbf{G}_\textbf{v}\leftarrow[\textbf{G}_\textbf{v},\frac{1}{n}\sum_{i=1}^n\textbf{u}_1'\textbf{X}_i\textbf{v}_1 \cdot \textbf{X}_i'\textbf{u}_1]$;\\
    
    Calculate the orthogonal complements $\textbf{G}_\textbf{u}^\perp$, $\textbf{G}_\textbf{v}^\perp$;
    }
     
	Return $(\textbf{U},\,\textbf{V})$;
\end{algorithm}

Our second proposed algorithm is based on the following ``flip-flop'' idea: First, we sample a uniformly random unit vector $\textbf{v}_0$ from $\mathbb{S}^{q - 1}$. Next, for this fixed $\textbf{v}_0$, we search for $ \textbf{u}_1 \in \mathbb{S}^{p - 1} $ that optimizes the kurtosis of the linear combination $\textbf{u}_1' \{ (\textbf{X}_i - \bar{\textbf{X}}) \textbf{v}_0 \}$. Afterwards, we hold $\textbf{\textbf{u}}_1$ fixed and take $\textbf{v}_1 \in \mathbb{S}^{q - 1}$ to be the optimizer of the kurtosis of $\textbf{v}_1' \{ (\textbf{X}_i - \bar{\textbf{X}})' \textbf{u}_1 \}$ and so on. This strategy allows reducing the problem to a sequence of optimization problems for vector-valued observations, for which efficient algorithms exist, e.g., the function \texttt{NGPP} in the R-package \texttt{ICtest}~\citep{R_ICTEst}, designed for the maximization of the square excess kurtosis. Thus, no information on mixing proportion $\alpha_1$ is required to extract optimizers this way. Based on our experiments, the success of this approach, formalized as Algorithm~\ref{alg::kappa_fixed_point} in Section \ref{sec:algorithm_appendix}, is highly dependent on the initial projection direction $\textbf{v}_0$ and, as such, we leave its study as part of future work. Thus, instead, in Section \ref{sec:simulations} our simulations and data example will be conducted using Algorithm \ref{alg::kappa_unconst}.

\section{Simulations and  example}\label{sec:simulations}

\subsection{Simulations}


In the simulation study, we consider two
homoscedastic Gaussian mixture models 
with two classes:
$$
\text{Model 1:}\quad\textbf{X}\sim \alpha_1\mathcal{N}_{5\times 3}(\textbf{0},\textbf{A},\textbf{B})+ \alpha_2\mathcal{N}_{5\times 3}(\textbf{T}_{2,1},\textbf{A},\textbf{B}),
$$ 
$$
\text{Model 2:}\quad\textbf{X}\sim \alpha_1\mathcal{N}_{5\times 3}(\textbf{0},\textbf{A},\textbf{B})+ \alpha_2\mathcal{N}_{5\times 3}(\textbf{T}_{2,2},\textbf{A},\textbf{B}),
$$
where $\textbf{A}\in\mathbb{R}^{5\times 5}$ and $\textbf{B}\in\mathbb{R}^{3\times 3}$ have $\mathrm{AR}(1)$-structures with unit variances and autocorrelations $\rho=0.6$ and $\rho=0.3$, respectively. The matrices $\textbf{T}_{2,1}\in\mathbb{R}^{5\times 3}$ and $\textbf{T}_{2,2}\in\mathbb{R}^{5\times 3}$ are chosen randomly such that 
non-zero singular value of $\textbf{A}^{-1/2}\textbf{T}_{2,1}\textbf{B}^{-1/2}$ is $4$ and non-zero singular values of $\textbf{A}^{-1/2}\textbf{T}_{2,2}\textbf{B}^{-1/2}$ are $5$ and $3$. More precisely, $\textbf{T}_{2,1}=\textbf{A}^{1/2}\textbf U_1 \boldsymbol\Lambda_{1,2}\textbf{V}_1\textbf{B}^{1/2}$,  $\textbf{T}_{2,2}=\textbf{A}^{1/2}\textbf U_2 \boldsymbol\Lambda_{2,2}\textbf{V}_2\textbf{B}^{1/2}$, 
where 
$$
\Lambda_{1,2}=\begin{pmatrix}
4&0&0\\
0&0&0\\
0&0&0\\
0&0&0\\
0&0&0
\end{pmatrix},\quad \Lambda_{2,2}=\begin{pmatrix}
5&0&0\\
0&3&0\\
0&0&0\\
0&0&0\\
0&0&0
\end{pmatrix},
$$
$\textbf{U}_1,\,\textbf{U}_2$ and $\textbf{V}_1,\,\textbf{V}_2$ are randomly generated  $5\times 5$ and $3\times 3$ orthogonal matrices, respectively.  

For every combination of the mixing proportion $\alpha\in\{0.1,0.2,0.3,0.4,0.49\}$ and sample size $n\in\{500,1000,2000,4000,8000,16000\}$ we then independently generate $m=1000$ samples from the models and compute for each the optimizers $(\textbf{u}_{ni},\textbf{v}_{ni})$, $i=1,\,2$, using  Algorithm~\ref{alg::kappa_unconst} with $5$ random initializations. Performance was measured in the simulation using the \textit{Maximal similarity index} (MSI) between two unit length vectors, where for $\textbf{x},\textbf{y}\in\mathbb{S}^{p-1}$, MSI$(\textbf{x},\textbf{y}):=|\textbf{x}'\textbf{y}|\in [0,1]$, where the value $0\,(1)$ of MSI correspond to $\textbf{x}$ being orthogonal (parallel) to $\textbf{y}$.
\begin{figure}[h]
    \centering
    \includegraphics[width=\linewidth]{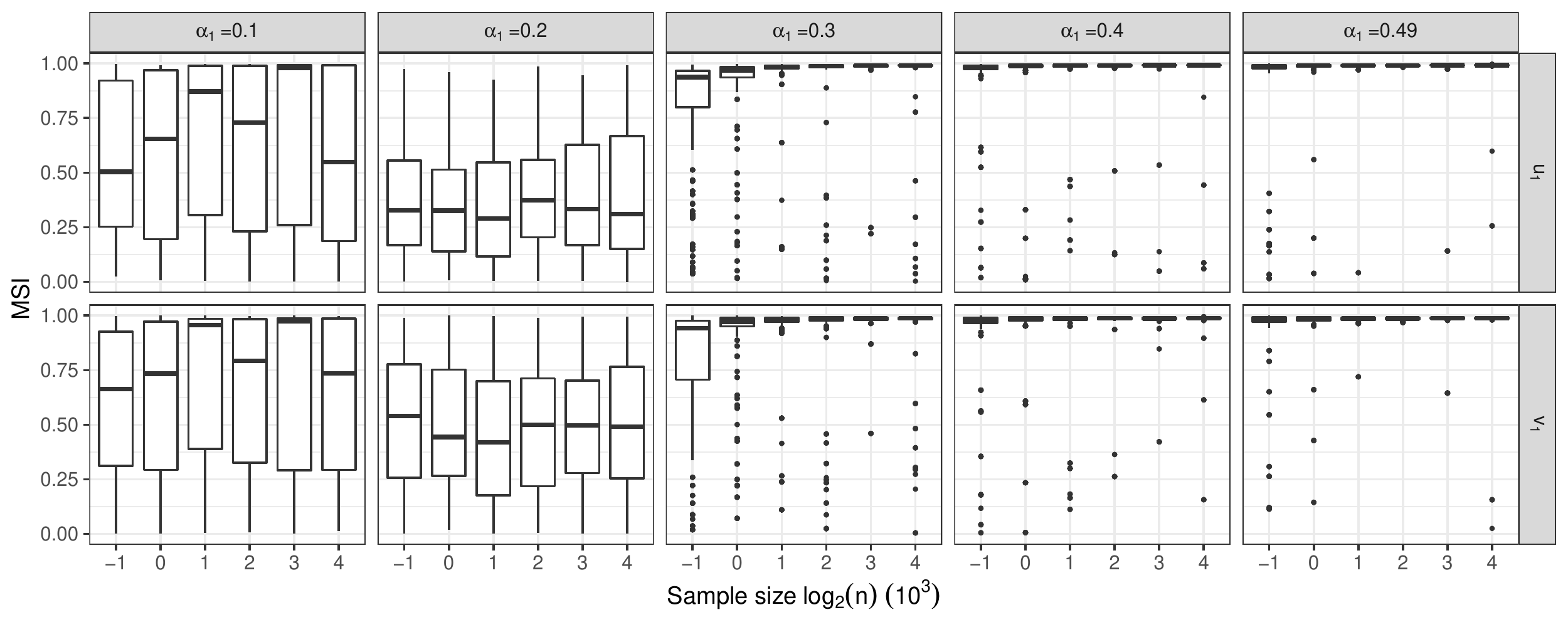}
    \caption{Boxplots of the $m=1000$ MSI-values $|\textbf{u}_{n1}'\textbf{u}_1|$ (left) and  $|\textbf{v}_{n1}'\textbf{v}_1|$ (right) for all combinations of the sample size $n$ and the mixing proportion $\alpha_1$, where $(\textbf{u}_{n1},\textbf{v}_{n1})$ are optimizers of $\kappa_{n,\textbf{X}}$ obtained by Algorithm~\ref{alg::kappa_unconst} under Model~$1$.}
    \label{fig:model_2_box}
\end{figure}
\begin{figure}[ht]
    \centering
    \includegraphics[width=\linewidth]{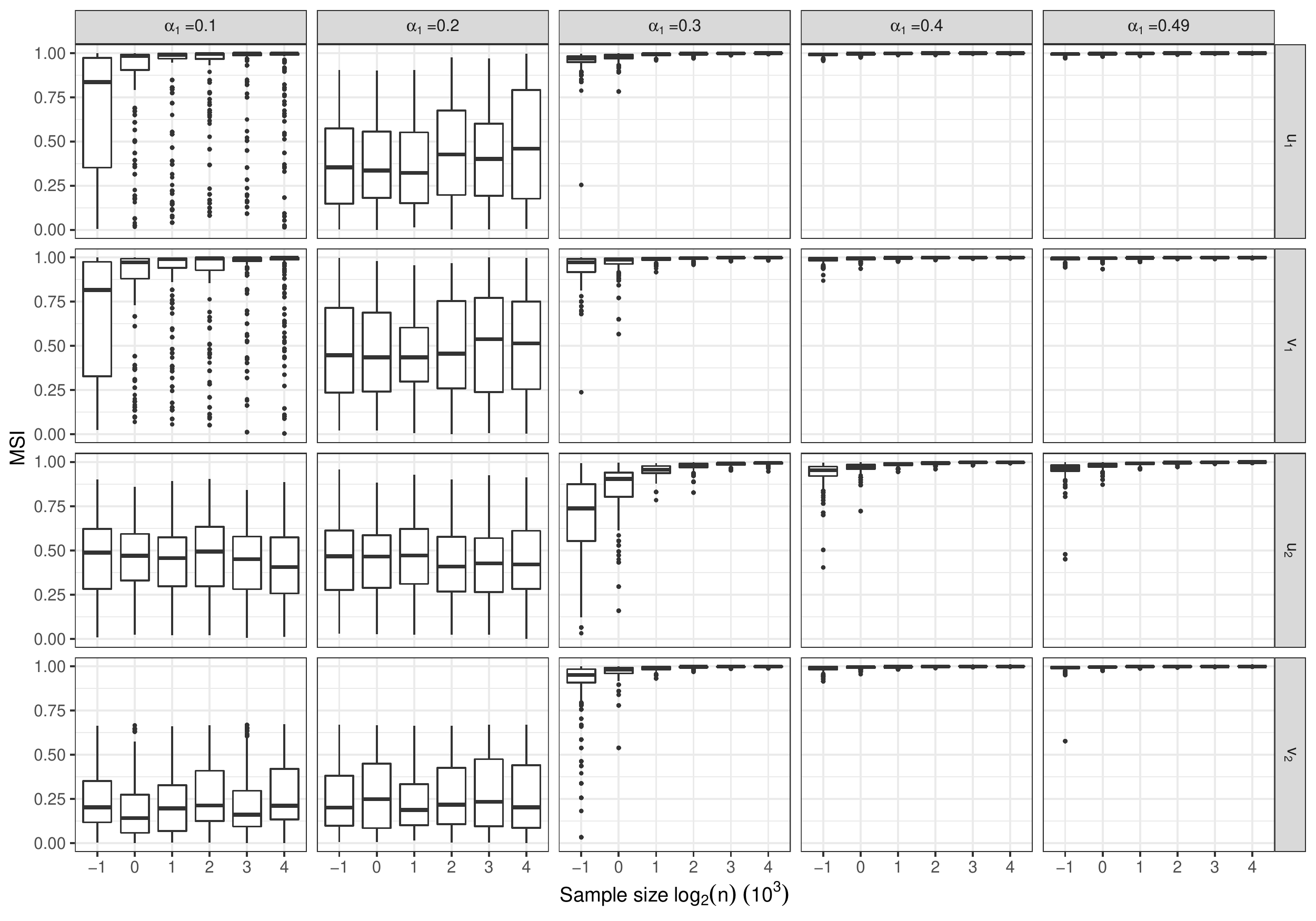}
    \caption{Boxplots of the $m=1000$ MSI-values $|\textbf{u}_{n1}'\textbf{u}_1|$,  $|\textbf{v}_{n1}'\textbf{v}_1|$ (left) and $|\textbf{u}_{n2}'\textbf{u}_2|$,  $|\textbf{v}_{n2}'\textbf{v}_2|$ (right) for all combinations of the sample size $n$ and the mixing proportion $\alpha_1$, where $(\textbf{u}_{ni},\textbf{v}_{ni})$, $i=1,2$, are optimizers of $\kappa_{n,\textbf{X}}$ obtained by Algorithm~\ref{alg::kappa_unconst} under Model $2$.}
    \label{fig:model_1_box}
\end{figure}
The resulting MSI-values are presented in Figures~\ref{fig:model_2_box} and~\ref{fig:model_1_box} and show that Algorithm~\ref{alg::kappa_unconst} estimates the optimal LDA projection with very high accuracy if the groups are moderately balanced. The bad performance for $\alpha_1 = 0.2$ comes with no surprise when we recall that $0.2$ is very close to the value $1/2-1/\sqrt{12} \approx 0.211$ for which the kurtosis of every projection is always $3$ under the model. Similar behaviour was observed in the simulation study performed by~\cite{radojicic2021} in the vector setting. As was to be expected, the estimation accuracy is somewhat worse for the second pair of optimizers in Figure \ref{fig:model_1_box} under the rank-2 Model $2$.

Figures~\ref{fig:W_model_1} and \ref{fig:W_model_2} give boxplots of the logarithmized squared Frobenius norm between the estimated and true $\textbf{W}_\mathrm{LDA}$. It is again visible that if the groups are well balanced and the sample size is large enough, the difference in norms is small, under both models. Finally, Figure~\ref{fig:W_model_2} reveals that the estimation of $\textbf{W}_\mathrm{LDA}$ is less efficient for $\alpha_1=0.49$ than for $\alpha_1\in\{0.3, 0.4\}$ and we recall from Section \ref{sec:lda} that the reason for this is the difficulty of the estimation of the signs $s_1, \ldots , s_d$ of the individual projections for nearly balanced mixtures, see the discussion after Theorem \ref{theo:matrix_lda_optimal_higher_rank}.

Unsurprisingly, Figures~\ref{fig:model_1_box}-\ref{fig:W_model_2} indicate that the accuracy of estimation of the whole mixing matrix $\textbf{W}_\mathrm{LDA}$ is significantly lower than the one of the individual optimizers $(\textbf{u}_k,\textbf{v}_k)$, where the observed behaviour in Model 1 indicates that it is due to poor estimation of coefficients multiplying $\textbf{u}_k\textbf{v}_k$ in the decomposition of $\textbf{W}_\mathrm{LDA}$; see Theorem~\ref{theo:matrix_lda_optimal_higher_rank}. 
Therefore, besides drawing conclusions on the data based solely on the scores obtained by rank-d projection of the data $\langle\textbf{X}_i,\textbf{W}_\mathrm{nLDA}\rangle$, $i=1,\dots,n$, we advise to inspect several rank-1 projections $\textbf{u}_k'\textbf{X}_i\textbf{v}_k'$, for $k=1,\,2,\dots,k_0$, as well. For $k_0\in\mathbb{N}$ small enough, e.g. $k_0=3$, the dimension of the transformed data is small enough so that visualizations and cluster identification are rather straightforward, but the estimates are  more accurate. This strategy bears even more benefits in the examples where the assumption of GMM is violated by e.g. presence of outliers; see for example Figure~\ref{fig:project}. Finally, note that the low MSI (high Frobenius norm) outliers observed in  Figures~\ref{fig:model_1_box}-\ref{fig:W_model_2} are mostly due to poor, randomly generated, initial value used in Algorithm~\ref{alg::kappa_unconst}. Namely, due to the large number of settings considered in the simulation study as well as the $m=1000$ repetitions of each setting, the number of re-initialization used in the simulation study is ``only'' 5. That number should in practice be substantially larger.  

\begin{figure}[h]
\centering
    \includegraphics[width=\linewidth]{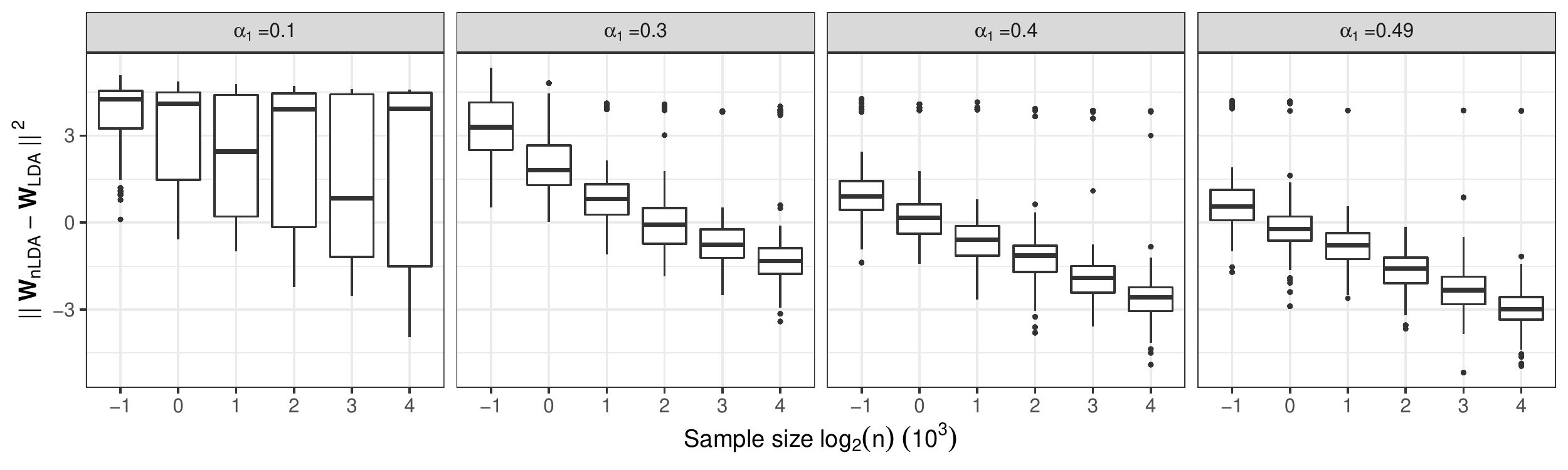}
    \caption{Boxplots of the $m=1000$ logarithmized squared Frobenius norm between the estimated and true $\textbf{W}_\mathrm{LDA}$ for all combinations of the sample size $n$ and the mixing proportion $\alpha_1$, where the estimate is obtained by Algorithm~\ref{alg::kappa_unconst} under Model $1$. The mixing proportion $\alpha_1=0.2$ is excluded due to it inflating the scales of the plot.}
    \label{fig:W_model_1}
\end{figure}

\begin{figure}[ht]
\centering
    \includegraphics[width=\linewidth]{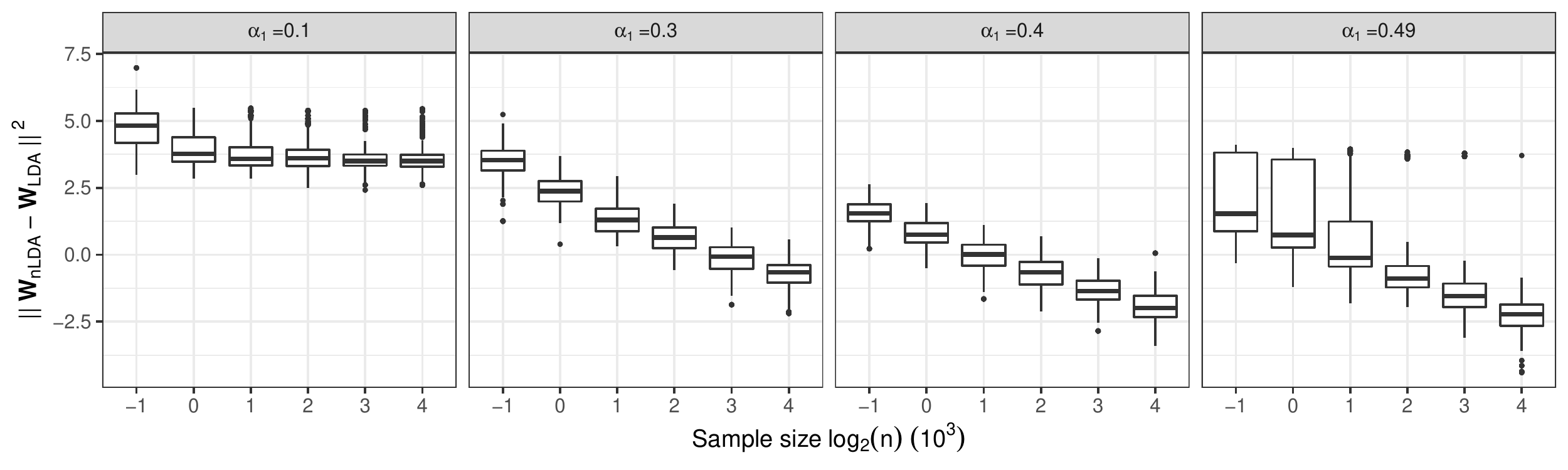}
    \caption{Boxplots of the $m=1000$ logarithmized squared Frobenius norm between the estimated and true $\textbf{W}_\mathrm{LDA}$ for all combinations of the sample size $n$ and the mixing proportion $\alpha_1$, where the estimate is obtained by Algorithm~\ref{alg::kappa_unconst} under Model $2$. The mixing proportion $\alpha_1=0.2$ is excluded due to it inflating the scales of the plot.}
    \label{fig:W_model_2}
\end{figure}



\subsection{Real data example}


\begin{figure}[ht]
    \centering
    \includegraphics[width=0.6\linewidth]{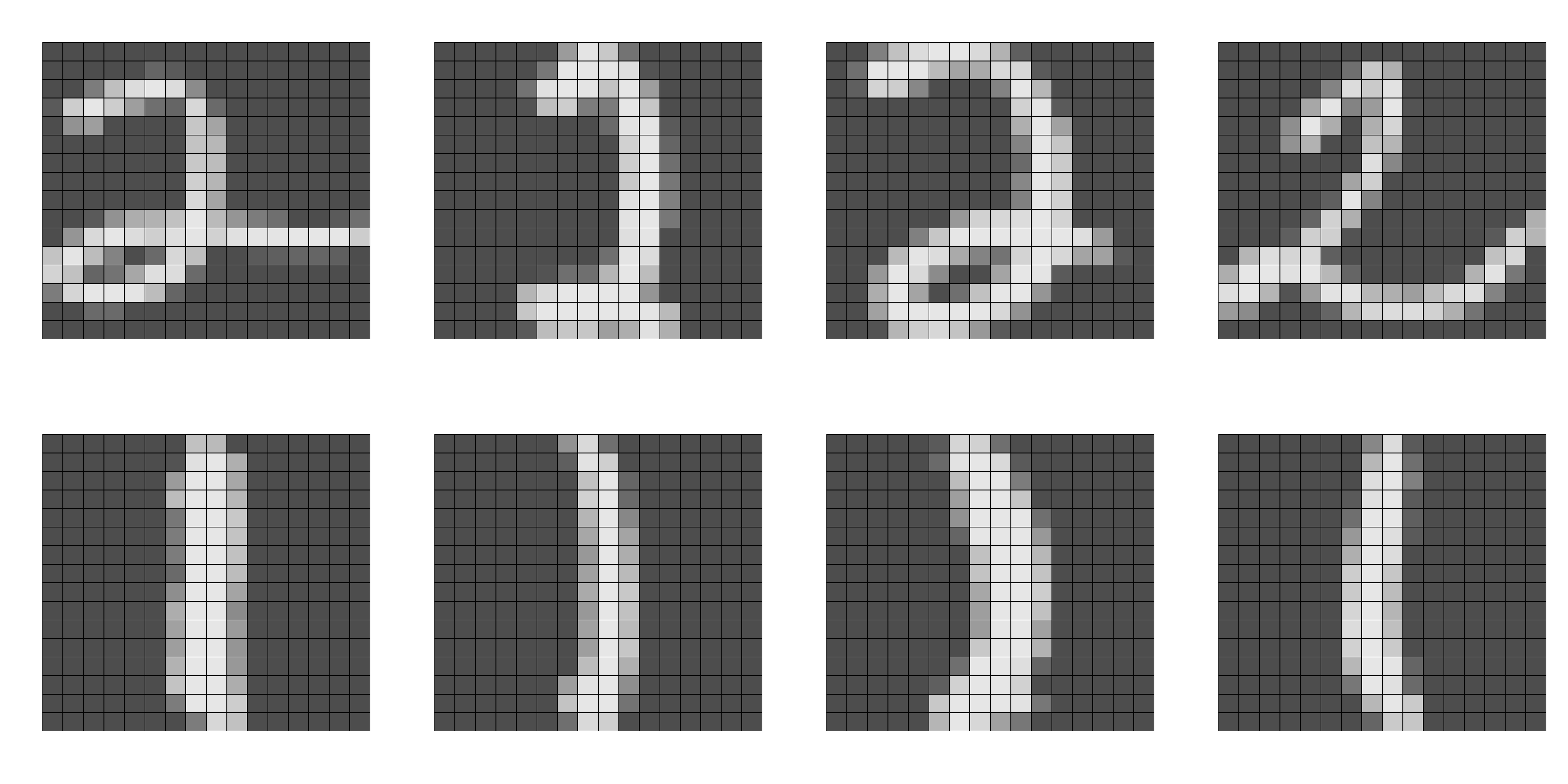}
    \caption{A sample of images of digit $1$ and digit $2$ from the \texttt{digits} data set. Each image is $16 \times 16$ matrix of grayscale intensities.}
    \label{fig:digits2}
\end{figure}

To evaluate the performance of our proposed method in a real data set we consider the data set \texttt{digits}, available freely in the R package \texttt{tensorBSS} \citep{rtensorBSS}. The data consist of $16 \times 16$ grayscale images of normalized handwritten digits ($0,\dots,9$) automatically scanned from envelopes by the U.S. Postal Service. For simplicity, we restrict ourselves to the training subset of $1732$ pictures, $1005$ of which correspond to digit $1$ and $731$ to digit $2$. A sample of the included images is shown in Figure~\ref{fig:digits2}.

We computed the estimate of $\textbf{W}_\mathrm{LDA}$ obtained by Algorithm~\ref{alg::kappa_unconst} using 15 initializations and projected the data both on the direction of the estimate and on the rank-1 direction corresponding to the first pair of found optimizers. As a reference, we computed for the data set the projection given by LDA (in the sense of Lemma \ref{lem:matrix_lda_optimal}) and the projection on the first pair of solutions found by MPCA. 
MPCA was computed using the function \texttt{tTucker} from the R-package \texttt{tensorBSS}~\citep{rtensorBSS}. The four resulting projections are shown in Figure~\ref{fig:project}.

\begin{figure}[ht]
\subfloat[Full rank projection]{\includegraphics[width = 0.25\linewidth]{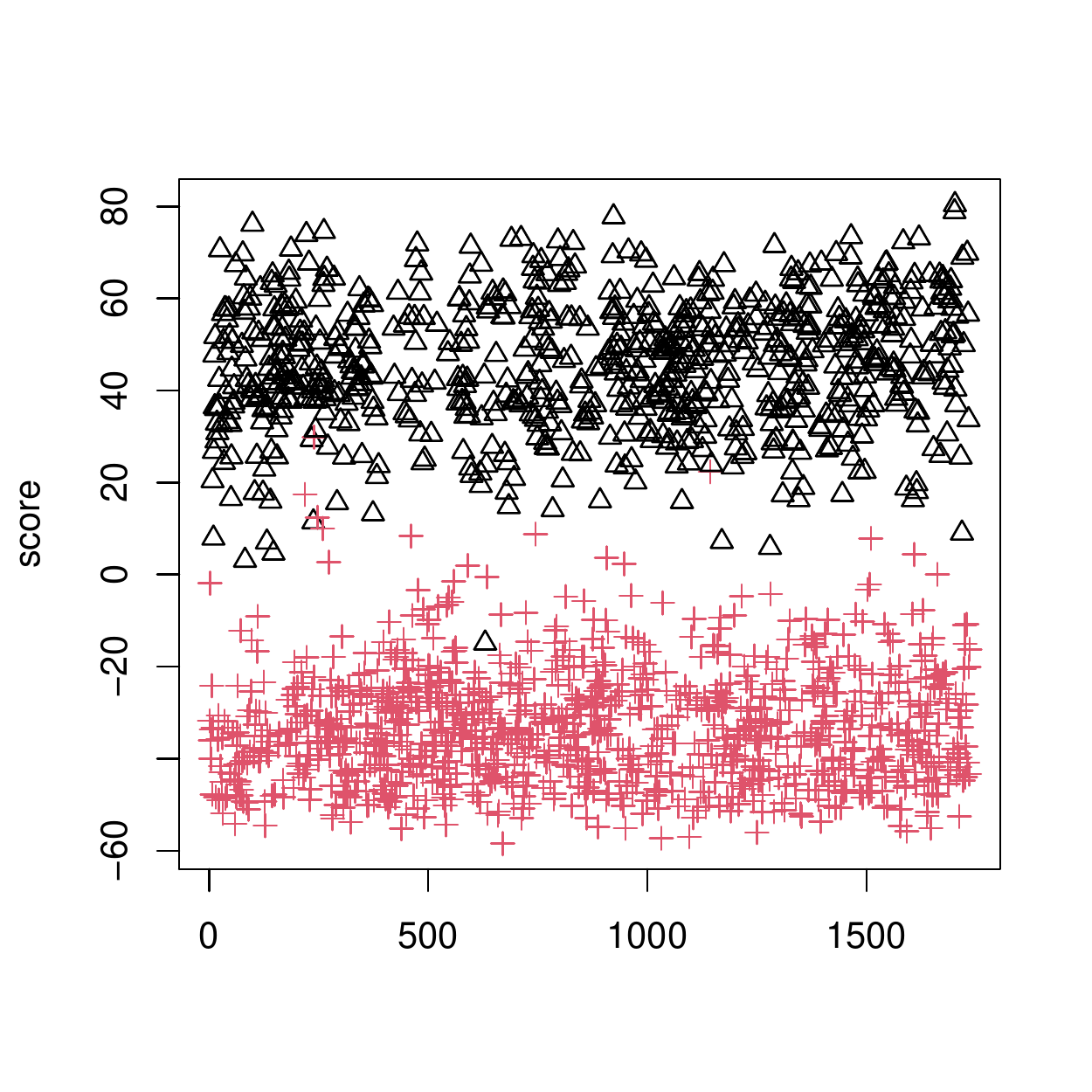}} 
\subfloat[Rank-1 projection]{\includegraphics[width = 0.25\linewidth]{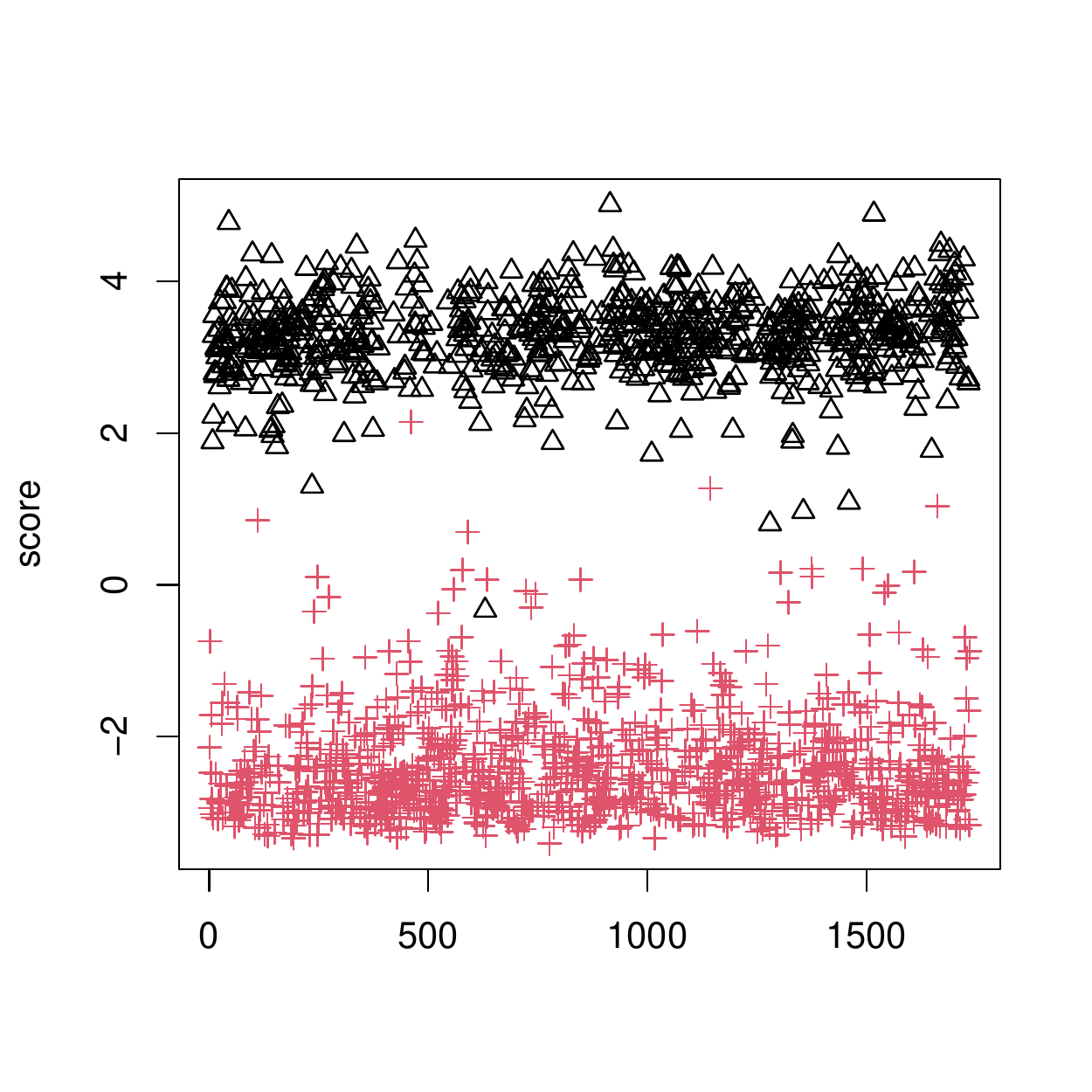}}
\subfloat[LDA]{\includegraphics[width = 0.25\linewidth]{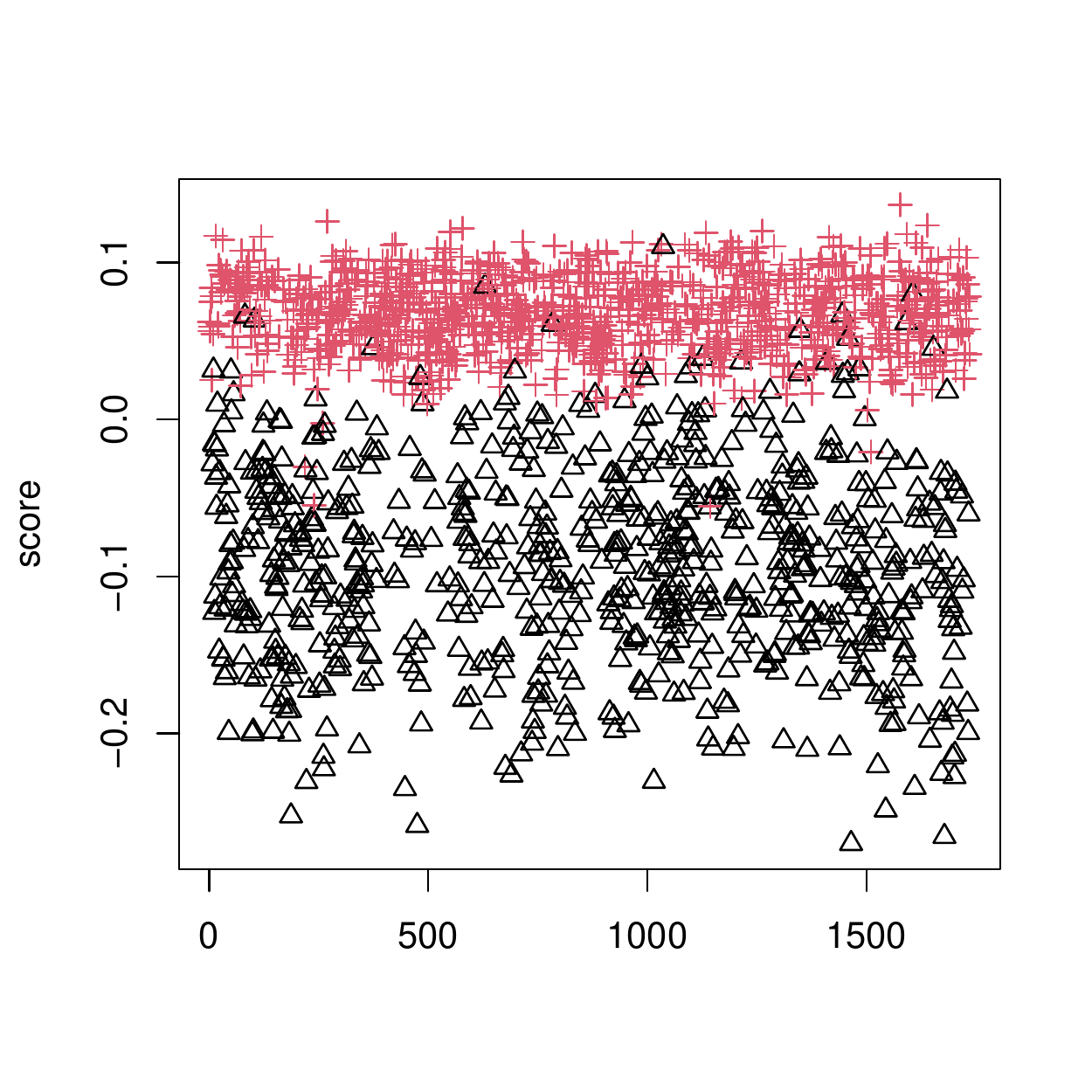}} 
\subfloat[MPCA]{\includegraphics[width = 0.25\linewidth]{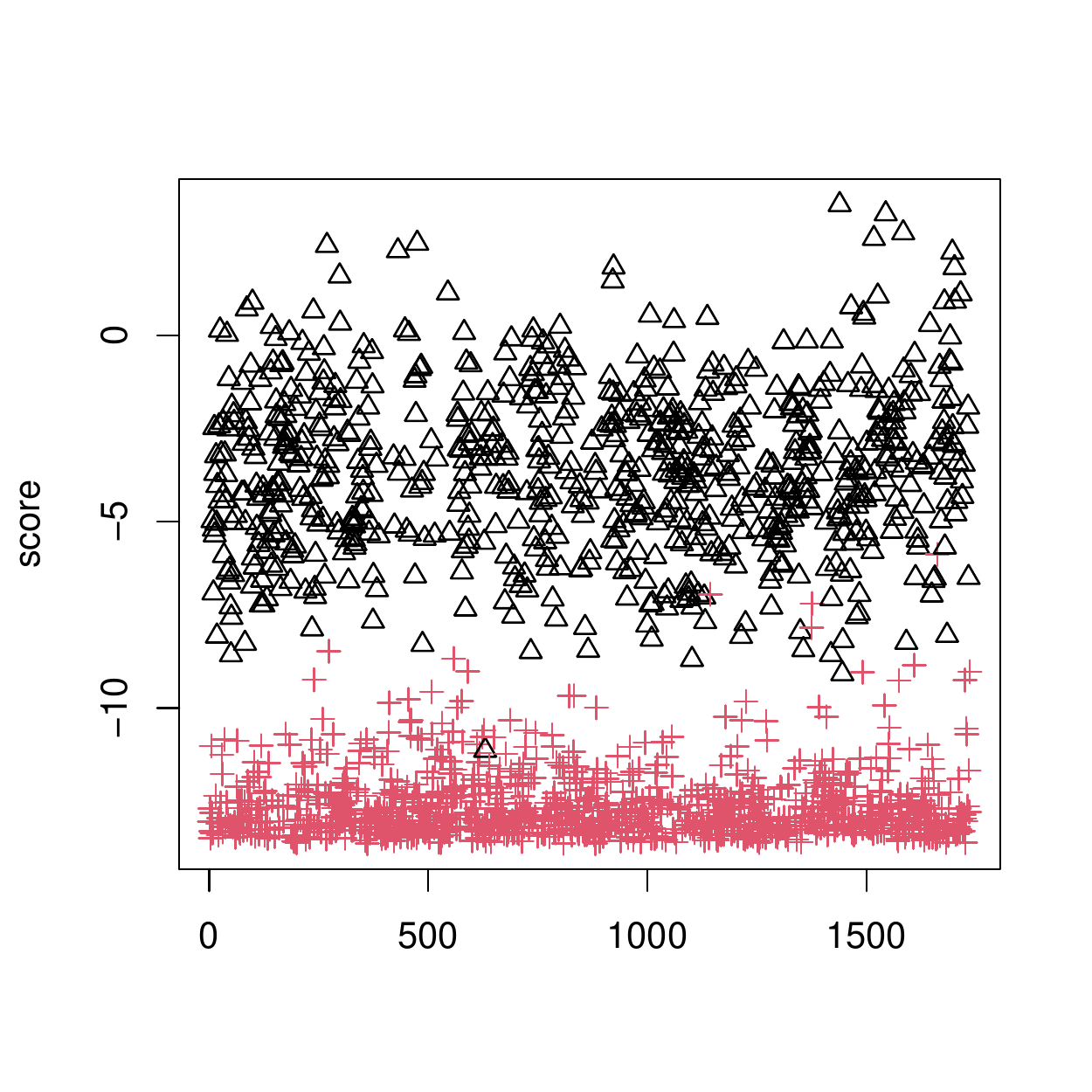}} 
 \caption{Left to right, univariate projections of \textit{digits} data  using the estimate of $\textbf{W}_\mathrm{LDA}$ obtained by Algorithm~\ref{alg::kappa_unconst}, first pair of optimizers of $\kappa_{n,\textbf{X}}$ obtained by Algorithm~\ref{alg::kappa_unconst}, LDA and MPCA, where the coloring and the point shape characterizes group membership (red color corresponds to digit $2$).}
    \label{fig:project}
\end{figure}

Figure~\ref{fig:project} shows that the projection on the first pair of optimizers of $\kappa_{n\textbf{X}}$ (second sub-plot) reveals two clear clusters corresponding to the true data labeling. The same is true also for the projection onto the full estimate of $\textbf{W}_\mathrm{LDA}$ (first sub-plot), but with a less clear cluster structure. However, the supervised LDA estimator (third sub-plot) gives clearly inferior results, thus partially explaining why using just the first pair of optimizers outperforms the use of the full estimate $\textbf{W}_\mathrm{LDA}$. It is worth mentioning that the estimated rank of $\textbf{W}_\mathrm{LDA}$ was $6$ (this estimate was obtained as the number of non-zero values among the estimates of $\lambda_1, \lambda_2, \ldots$).

Further, we clustered the observations along the obtained projections by fitting a two-component Gaussian mixture model via an EM-algorithm as implemented in the R-package \texttt{mclust}~\citep{mclust}. 
If no homoscedasticity restriction on the variances of the model was posed, the misclassification rate was 0.12\% for the rank-1 projection, 1.09\% for the projection onto the full estimate of $\textbf{W}_\mathrm{LDA}$, 3.17\% for the MPCA-based projection and 17.05\% for the LDA-based projection. If, on the other hand, equal variances were assumed, the misclassification rate was 0.23\% for the rank-1 projection, 0.17\% for the projection onto the full estimate of $\textbf{W}_\mathrm{LDA}$, 23.10\% for the MPCA-based projection, and 28.08\% for the LDA-based projection. Thus the matrix projection pursuit approach is clearly the best way here to reduce the dimension of the problem. The relative bad performance of LDA in this example indicates that the data does not follow a Gaussian mixture model.

\section{Discussion}\label{sec:discussion}

In traditional projection pursuit kurtosis is arguably the most popular projection index. There are however many other possibilities to measure what are interesting directions as for example discussed in \cite{huber1985projection,FischerBerroNordhausenRuizGazen2019,RadojicicNordhausnenOja2020}. However, how to generalize these indices to the matrix case is not always clear and is the topic of further research. 
In the vector case invariant coordinate selection (ICS) \cite{Tyler2009} can be seen as projection pursuit without the cost of the pursuit and is for example also able to recover the LDA direction(s) without knowing the class labels \citep{Tyler2009,pena2010eigenvectors}. A direction of further research will be if ICS can also be applied to matrix-variate data, where already some first steps are made in \cite{virta2017independent}. Taking the route via ICS might be also promising in the sense that it alleviates one of the biggest drawbacks of the approach suggested here, which is the high sensitivity to outliers (through our use of fourth moments). In an ICS framework, as a workaround, one could replace the kurtosis as an objective function with one of its robust alternatives. For example, given two \textit{scatter matrices} $\textbf{V}_1, \textbf{V}_2$, \cite{Tyler2009} considers the ratio $\textbf{h}'\textbf{V}_2\textbf{h}/\textbf{h}'\textbf{V}_1\textbf{h}$ as measuring the ``generalized kurtosis'' in the direction $\textbf{h}$ (the regular kurtosis is obtained by a specific choice of $\textbf{V}_1$ and $\textbf{V}_2)$. In general, \cite{Tyler2009} suggest using either a combination of class II and class III scatters, or two class II scatters, also commenting on which combinations can be used to recover the optimal LDA direction in a vector setting. Class II scatters refer to those which are moderately robust, but whose breakdown points are no greater than $1/(p+1)$, where $p$ is the dimension of the data, with multivariate $M$-estimates of the covariance matrix~\citep{maronna1976robust} being maybe the most prominent members of this class~\citep{Tyler2009}. Class III scatters are those with very high breakdown points, with some members of the class being $S$-estimates~\citep{Davies1987}, $\tau$-estimates~\citep{tau_estimates} and the minimum volume ellipsoid~\citep{Rousseeuw1985}. Note that all these estimators are defined for vector data and would have to be first extended to matrix-valued data to apply in our scenario, possibly using a ``flip-flop''-style idea similar to the one in Section \ref{sec:algorithm}. Finally, note that robust scatter matrices are usually computationally very demanding, so further investigation is needed prior to implementing such methods in practice.  

As an alternative to knowing the mixing proportion $\alpha_1$ (and, subsequently, whether to minimize or maximize), we proposed in Section \ref{sec:lda} using the squared excess kurtosis in place of kurtosis, a change which guarantees that maximization is always sufficient. However, in our experiments (not shown here) we discovered that this approach ends up finding mostly directions dominated by outliers. This behaviour is essentially caused by the fact that (excess) kurtosis is bounded from below but not from above. Consider, e.g., data with two perfectly separated groups of equal size and a single outlier. The direction corresponding to the group separation has excess kurtosis roughly somewhere between -2 and 0, whereas there is no upper bound for the excess kurtosis of the direction of the outlier, the actual value depending on its level of outlyingness. Taking now the square of the excess kurtosis then masks the separation direction, making it impossible to find through maximization (naturally, this issue does not happen under the model where no outliers occur). This is in agreement with~\cite{Tyler2009} and~\cite{Ruiz1990}, where it is argued that, in the vector setting, non-robust generalized kurtosis measures should be used as PP indices if the data is expected not to contain any outliers, or if the objective itself is outlier detection, and the same suggestion can be seen to apply also to the use of squared excess kurtosis in our context.

Besides the squared excess kurtosis, another strategy in a situation where the value of $\alpha_1$ is unknown is to estimate several projection directions (by both minimizing and maximizing and using multiple initial values) and then project the data onto these directions separately. By then inspecting these projections individually, one can draw conclusions not only on the value of $\alpha_1$ but also on the presence of outliers and the validity of the model. This strategy seems particularly useful as we noticed in our experiments (and in the real data example in Section \ref{sec:simulations}) that often already the first few projections carry almost all of the information on the group membership.

Another practical issue with the method is that while on the population level, it is guaranteed that $\alpha_1\alpha_2\theta_j < 1$ (see the formula for $\lambda_j$ in Section \ref{sec:lda}), this is not necessarily the case for the sample estimate of the same quantity, especially if the groups are balanced and well-separated (i.e., the kurtosis of the corresponding projections is very small). A possible solution is obtained by replacing the maximum with absolute value in the definition of $\lambda_{j}$, $j=1,\ldots,d$.

Recall that, theoretically, we are unable to reconstruct the optimal projection in Theorems \ref{theo:matrix_lda_optimal_higher_rank} and \ref{theo:matrix_lda_optimal_mode_wise} when the groups are exactly balanced, i.e., $\alpha_1 = 0.5$.  A possible way around this is to ensure that the correlations of the sequential projections are positive. Therefore, in practice, after the first two pairs $(\textbf{u}_{n1},\textbf{v}_{n1})$ and $(\textbf{u}_{n2},\textbf{v}_{n2})$ of optimizers are estimated, one should calculate the correlation of the projections $\textbf{u}_{nk}’\textbf{X}_i\textbf{v}_{nk}$, $k=1,\,2$, $i=1,\dots,n$ of data onto the first two optimizing directions, respectively. If the correlation is negative, sign of the second projection should be changed.  One proceeds in the same manner for further optimizers as well.  Based on our experiments, this idea seems to work very well in practice.


Finally, we note that both proposed projection pursuit indices are \textit{affine equivariant} in the sense that if the data are subjected to the transformation $\textbf{X}_i \mapsto \textbf{M}' \textbf{X}_i \textbf{N} + \textbf{C}$, for some $\textbf{C} \in \mathbb{R}^{p \times q}$ and some full rank $\textbf{M} \in \mathbb{R}^{p \times p}$, $\textbf{N} \in \mathbb{R}^{q \times q}$, then the members $ (\textbf{u}_{n1}, \textbf{v}_{n1}), \ldots , (\textbf{u}_{nd}, \textbf{v}_{nd}) \in \mathcal{U}_0 $ of any sequence of $(\mathcal{G}_{n1,\textbf{X}}, \mathcal{G}_{n2,\textbf{X}})$-maximizers of $ \kappa_{n\textbf{X}} $ or $(\psi_{n \textbf{X}}, \psi_{n \textbf{X}'})$ transform as $(\textbf{u}_{nk}, \textbf{v}_{nk}) \mapsto (\textbf{M}^{-1} \textbf{u}_{nk}/\| \textbf{M}^{-1} \textbf{u}_{nk} \|, \textbf{N}^{-1} \textbf{v}_{nk}/\| \textbf{N}^{-1} \textbf{v}_{nk} \|)$, $k = 1, \ldots, d$ (assuming of course that the constraint matrices are subjected to the transformation). This fact will likely simplify the derivation of the limiting distributions of the estimators, a task we have left for future work, as we conjecture that $\kappa_{n\textbf{X}}$ based estimator $\textbf{W}_\mathrm{nLDA}$ of $\textbf{W}_\mathrm{LDA}$ has limiting normal distribution, with standard $\sqrt{n}$ convergence rate.

A natural extension of the proposed methods is to the general tensors, to accommodate e.g. color images and videos. One could proceed by taking the kurtosis of the rank-1 tensor projections as the projection index. However, in the presented matrix setting, sequential optimizers of $\kappa_\textbf{X}$ are not orthogonal, but satisfy modified orthogonality constraints. Therefore, the extension to the general tensor setting is not so straightforward, thus making it a topic of future research.

\section*{Appendix}
\appendix

\setcounter{lemma}{0}
\renewcommand{\thelemma}{\Alph{section}\arabic{lemma}}

\section{Well-definedness of $\kappa_\textbf{X}$ and $\psi_\textbf{X}$}\label{sec:well_defined}

Assume, without loss of generality, that $\mathrm{E}(\textbf{X}) = \textbf{0} $. Then, the simultaneous index
\[
\kappa_{\textbf{X}}(\textbf{u}, \textbf{v}) = \frac{\mathrm{E} \left( [ \textbf{u}' \textbf{X}  \textbf{v} ]^4 \right) }{ \left\{ \mathrm{E} \left( [ \textbf{u}' \textbf{X} \textbf{v} ]^2 \right) \right\}^2}
\]
is well-defined as soon as the random variable $\textbf{u}' \textbf{X} \textbf{v}$ is, for all $\textbf{u} \in \mathbb{S}^{p -1}$ and $ \textbf{v} \in \mathbb{S}^{q - 1}$,  not almost surely a constant. This condition is equivalent to any of the following:
\begin{itemize}
    \item[a)] The matrix $\mathrm{E}(\textbf{X} \textbf{v} \textbf{v}' \textbf{X}')$ is positive definite for all $\textbf{v} \in \mathbb{S}^{q - 1}$.
    \item[b)] The matrix $\mathrm{E}(\textbf{X}' \textbf{u} \textbf{u}' \textbf{X})$ is positive definite for all $\textbf{u} \in \mathbb{S}^{p - 1}$.
    \item[c)] The random variable $(\textbf{v} \otimes \textbf{u})' \mathrm{vec}(\textbf{X})$ is not almost surely a constant for all $\textbf{u} \in \mathbb{S}^{p -1} $ and $ \textbf{v} \in \mathbb{S}^{q - 1}$.
\end{itemize}

A sufficient, but not necessary, condition for c) to hold is that $\mathrm{Cov}\{ \mathrm{vec}(\textbf{X})\}$ is positive definite which is actually what one needs to assume when applying regular kurtosis-based projection pursuit to the vectorized matrix $\mathrm{vec}(\textbf{X})$. Thus, matrix projection pursuit directly on $\textbf{X}$ (with the simultaneous index $\kappa_\textbf{X}$) requires weaker assumptions than what would be needed if one first converted $\textbf{X}$ to $\mathrm{vec}(\textbf{X})$. In particular, matrix PP allows having perfectly correlated elements in $\textbf{X}$. For example, let the $2 \times 2$ matrix $\textbf{X}$ have the structure
\begin{align*}
    \textbf{X} = \begin{pmatrix}
    x_1 & x_2 \\
    x_2 & x_3 \\
    \end{pmatrix}
\end{align*}
where $x_1, x_2, x_3$ are independent zero-mean random variables with unit variances. Then $\textbf{u}' \mathrm{E}(\textbf{X} \textbf{v} \textbf{v}' \textbf{X}') \textbf{u} = 1 + 2 u_1 u_2 v_1 v_2$ is strictly positive as $|u_1 u_2|, |v_1 v_2| < 1/\sqrt{2}$, showing that we satisfy condition a) above.

However, the conditions a)-c) all involve the parameters $\textbf{u}$ and $\textbf{v}$, making them rather nonintuitive. A parameter-free version is obtained, for example, if $\textbf{X}$ can be expressed as $\textbf{X} = \textbf{C} \textbf{Z} \textbf{D}'$ for some invertible $\textbf{C} \in \mathbb{R}^{p \times p}$, $\textbf{D} \in \mathbb{R}^{q \times q}$ and a $p \times q$ random matrix $\textbf{Z}$ having independent elements with zero means and equal variances (this is the so-called matrix independent component model, see \cite{virta2017independent}). Namely, in this special case, the above conditions are equivalent to
\begin{itemize}
    \item[d)] The matrices $\mathrm{E}(\textbf{X}\textbf{X}')$ and $\mathrm{E}(\textbf{X}'\textbf{X})$ are positive definite.
\end{itemize}
Similarly, under the matrix normal mixture utilized in Section \ref{sec:lda}, the condition d) (which is implied by our assumption in Section \ref{sec:lda} that the covariance parameters $\textbf{A}$ and $\textbf{B}$ are positive-definite) guarantees that $\kappa_\textbf{X}$ is well-defined.

Finally, the mode-wise index 
\begin{align*}
    \psi_{\textbf{X}}( \textbf{u} ) = \mathrm{E}\left[ \left\{ \textbf{u}' \textbf{X} \left[ \mathrm{E} \left( \textbf{X}' \textbf{u} \textbf{u}' \textbf{X} \right) \right]^{-1} \textbf{X}' \textbf{u} \right\}^2  \right]
\end{align*}
is well-defined as soon as the matrix $\mathrm{E}(\textbf{X}' \textbf{u} \textbf{u}' \textbf{X})$ is positive definite for all $\textbf{u} \in \mathbb{S}^{p - 1}$, i.e., under condition b), showing that the above discussion applies to it as well.



\section{Proofs}\label{sec:proofs}

\begin{proof}[Proof of Lemma \ref{lem:existence}]
Starting with the claim \textit{i}), our assumption guarantees that $\kappa_\textbf{X}$ is a continuous function with a compact domain. Hence, there exist $(\textbf{u}_0, \textbf{v}_0),(\textbf{u}_1,\textbf{v}_1)\in\mathcal{U}_0$ such that 
$$
\kappa_\textbf{X}(\textbf{u}_0,\textbf{v}_0)=\sup_{(\textbf{u},\textbf{v})\in\mathcal{U}_0}\kappa_\textbf{X}(\textbf{u},\textbf{v}),\quad \kappa_\textbf{X}(\textbf{u}_1,\textbf{v}_1)=\inf_{(\textbf{u},\textbf{v})\in\mathcal{U}_0}\kappa_\textbf{X}(\textbf{u},\textbf{v}),
$$
proving the claim \textit{i}). To establish the second one, we first observe that, for a fixed $\textbf{u} \in \mathbb{S}^{p - 1}$, the matrix $\mathrm{E} 
[ \{ \textbf{X} - \mathrm{E}(\textbf{X}) \}' \textbf{u} \textbf{u}' \{ \textbf{X} - \mathrm{E}(\textbf{X}) \} ]$ is invertible if and only if it is positive-definite, i.e., when
\begin{align*}
    0 < \textbf{v}' \mathrm{E} [ \{ \textbf{X} - \mathrm{E}(\textbf{X}) \}' \textbf{u} \textbf{u}' \{ \textbf{X} - \mathrm{E}(\textbf{X}) \} ] \textbf{v} = \mathrm{E} \left( [ \textbf{u}' \{ \textbf{X} - \mathrm{E}(\textbf{X}) \} \textbf{v} ]^2 \right), \quad \mbox{for all } \textbf{v} \in \mathbb{S}^{q-1}.
\end{align*}
But, by our assumption the above holds for all $\textbf{u} \in \mathbb{S}^{p-1}$, making the function $\textbf{u} \mapsto (\mathrm{E} 
[ \{ \textbf{X} - \mathrm{E}(\textbf{X}) \}' \textbf{u} \textbf{u}' \{ \textbf{X} - \mathrm{E}(\textbf{X}) \} ])^{-1}$ continuous. Hence, also $\psi_\textbf{X}$ is continuous, and arguing now as in part \textit{i}) establishes the second claim.
\end{proof}

\begin{proof}[Proof of Lemma \ref{lem:matrix_lda_optimal}]
	Recall that the density function of the matrix normal distribution $ \mathcal{N}_{p \times q}(\textbf{T}, \textbf{A}, \textbf{B}) $ writes
	\[
	f_{\textbf{T}, \textbf{A}, \textbf{B}}(\textbf{X}) = \frac{1}{(2 \pi)^{pq/2} | \textbf{A} |^{q/2} | \textbf{B} |^{p/2}} \exp\left\{  -\frac{1}{2} \mathrm{tr}\left[ \textbf{B}^{-1} (\textbf{X} - \textbf{T})' \textbf{A}^{-1} (\textbf{X} - \textbf{T}) \right] \right\}.
	\]
	It is straightforward to verify that Fisher's linear discriminant rule,
	\[
	f_{\textbf{T}_2, \textbf{A}, \textbf{B}}(\textbf{X}) > f_{\textbf{T}_1, \textbf{A}, \textbf{B}}(\textbf{X}),
	\]
	of classifying an observation $ \textbf{X} \in \mathbb{R}^{p \times q} $ to class 2 equals:
	\begin{align*}
	\mathrm{tr}\left[ \textbf{A}^{-1} (\textbf{T}_2 - \textbf{T}_1) \textbf{B}^{-1} \textbf{X}' \right] &> \frac{1}{2} \left( \mathrm{tr}\left[ \textbf{A}^{-1} \textbf{T}_2 \textbf{B}^{-1} \textbf{T}_2' \right] -  \mathrm{tr}\left[ \textbf{A}^{-1} \textbf{T}_1 \textbf{B}^{-1} \textbf{T}_1' \right] \right).
	\end{align*}
	I.e. the classification depends on $ \textbf{X} $ only through its projection
	\[
	\mathrm{tr}\left[ \textbf{A}^{-1} (\textbf{T}_2 - \textbf{T}_1) \textbf{B}^{-1} \textbf{X}' \right] = \langle \textbf{A}^{-1} (\textbf{T}_2 - \textbf{T}_1) \textbf{B}^{-1}, \textbf{X} \rangle,
	\]
	onto $ 	\textbf{W}_{\mathrm{LDA}} = \textbf{A}^{-1} (\textbf{T}_2 - \textbf{T}_1) \textbf{B}^{-1} $, proving the claim.
\end{proof}

Before proving Theorem \ref{theo:matrix_lda_optimal_higher_rank}, we first establish an auxiliary lemma.

\begin{lemma}\label{lem:rational_function}
	Let $ \alpha \in (0, 1) $. The function $ f: [0, \infty) \rightarrow \mathbb{R} $ defined as
	\[
	f(x) = \frac{3 + 6 \alpha (1 - \alpha) x + \alpha (1 - \alpha) [\alpha^3 + (1 - \alpha)^3 ] x^2 }{[1 + \alpha (1 - \alpha) x]^2}
	\]
	\begin{itemize}
		\item[i)] is strictly decreasing if $ | \alpha - 1/2 | < 1/\sqrt{12} $,
		\item[ii)] is a constant function, $ f(x) = 3 $, if $ | \alpha - 1/2 | = 1/\sqrt{12} $,
		\item[iii)] is strictly increasing if $ | \alpha - 1/2 | > 1/\sqrt{12} $.
	\end{itemize}
\end{lemma}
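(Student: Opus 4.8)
The plan is to treat this as a calculus exercise about a rational function of a single real variable $x$ on $[0,\infty)$, with $\alpha$ playing the role of a fixed parameter. First I would simplify notation by writing $\beta := \alpha(1-\alpha) \in (0, 1/4]$, so that the denominator becomes $(1+\beta x)^2$ and the numerator becomes $3 + 6\beta x + \beta[\alpha^3 + (1-\alpha)^3] x^2$. A useful preliminary identity is $\alpha^3 + (1-\alpha)^3 = 1 - 3\alpha(1-\alpha) = 1 - 3\beta$, which one gets by expanding $(\alpha + (1-\alpha))^3 = 1$. So $f(x) = \frac{3 + 6\beta x + \beta(1-3\beta) x^2}{(1+\beta x)^2}$.

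Next I would differentiate. Writing $N(x)$ and $D(x) = (1+\beta x)^2$ for numerator and denominator, we have $f'(x) = \frac{N'(x) D(x) - N(x) D'(x)}{D(x)^2}$, and since $D(x)^2 > 0$ on $[0,\infty)$ the sign of $f'$ is the sign of the polynomial $P(x) := N'(x) D(x) - N(x) D'(x)$. Here $N'(x) = 6\beta + 2\beta(1-3\beta) x$ and $D'(x) = 2\beta(1+\beta x)$. I expect that after expansion the factor $(1+\beta x)$ divides $P(x)$ (it divides the second term obviously, and one should check it divides the first term's relevant combination — in fact $P(x) = (1+\beta x)\bigl[N'(x) - \tfrac{2\beta}{1+\beta x} N(x)\bigr]$ is not quite a polynomial identity that way, so more carefully: $P(x) = N'(x)(1+\beta x)^2 - 2\beta(1+\beta x) N(x) = (1+\beta x)\bigl[N'(x)(1+\beta x) - 2\beta N(x)\bigr]$). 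So the sign of $f'(x)$ equals the sign of $Q(x) := N'(x)(1+\beta x) - 2\beta N(x)$ since $1 + \beta x > 0$. A direct computation gives
\[
Q(x) = \bigl(6\beta + 2\beta(1-3\beta)x\bigr)(1+\beta x) - 2\beta\bigl(3 + 6\beta x + \beta(1-3\beta)x^2\bigr).
\]
Expanding, the constant terms $6\beta - 6\beta$ cancel, the $x^2$ terms $2\beta^2(1-3\beta) - 2\beta^2(1-3\beta)$ cancel, and one is left with a term linear in $x$: the coefficient is $6\beta\cdot\beta + 2\beta(1-3\beta) - 12\beta^2 = 6\beta^2 + 2\beta - 6\beta^2 - 12\beta^2 = 2\beta - 12\beta^2 = 2\beta(1 - 6\beta)$. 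Hence $Q(x) = 2\beta(1-6\beta)\, x$, so $f'(x) = \frac{2\beta(1-6\beta)x}{(1+\beta x)^3}$ for $x \ge 0$.

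From this closed form the three cases follow immediately: since $\beta > 0$ and $x \ge 0$ (with $x>0$ in the interior), $f'(x)$ has the sign of $1 - 6\beta = 1 - 6\alpha(1-\alpha)$. Now $1 - 6\alpha(1-\alpha) = 6(\alpha - 1/2)^2 - 1/2$ (a quick completion of the square: $6\alpha(1-\alpha) = 6(1/4 - (\alpha-1/2)^2) = 3/2 - 6(\alpha-1/2)^2$), so $1 - 6\beta = 6(\alpha-1/2)^2 - 1/2$, which is negative, zero, or positive exactly as $|\alpha - 1/2|$ is less than, equal to, or greater than $1/\sqrt{12}$. This gives parts (i) and (iii). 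For part (ii), when $|\alpha - 1/2| = 1/\sqrt{12}$ we have $f' \equiv 0$ on $[0,\infty)$, so $f$ is constant, and evaluating at $x = 0$ gives $f(0) = 3$, hence $f \equiv 3$. I do not anticipate a genuine obstacle here; the only mildly delicate point is carrying out the expansion of $Q(x)$ without algebra slips, and the payoff is that everything collapses to the single transparent factor $2\beta(1-6\beta)x$.
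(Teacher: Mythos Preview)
Your proposal is correct and follows essentially the same route as the paper's proof: both introduce $\beta = \alpha(1-\alpha)$, differentiate, factor out $(1+\beta x)$, and reduce the sign of $f'$ to the sign of $1-6\alpha(1-\alpha)$ (the paper keeps $\gamma := \alpha^3+(1-\alpha)^3$ as a separate symbol and arrives at $\gamma-3\beta = 1-6\beta$, which is exactly your substitution made one step earlier). Your completion-of-the-square translation to the condition $|\alpha-1/2|\lessgtr 1/\sqrt{12}$ is slightly more explicit than the paper's, but the argument is otherwise the same.
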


\begin{proof}[Proof of Lemma \ref{lem:rational_function}]
	The denominator of $ f'(x) $ is always positive, meaning that the sign of $ f'(x) $ is determined by the sign of its numerator,
	\begin{align*}
	& (6 \beta + 2 \beta \gamma x)(1 + \beta x)^2 - 2 \beta (3 + 6 \beta x+ \beta \gamma x^2 )(1 + \beta x) \\
	=& 2 \beta (1 + \beta x) \left[  (3 + \gamma x)(1 + \beta x) - (3 + 6 \beta x+ \beta \gamma x^2 ) \right] \\
	=& 2 \beta (1 + \beta x) x \left( \gamma - 3 \beta \right)
	\end{align*}
	where we have used the shorthand $ \beta := \alpha (1 - \alpha) $ and $ \gamma := \alpha^3 + (1 - \alpha)^3 $. Now,
	\[
	\gamma - 3 \beta = 1 - 6 \alpha (1 - \alpha) =: h(\alpha),
	\]
	and $ h(\alpha) $ can be verified to have $ h(\alpha) < 0 $ if $ | \alpha - 1/2 | < 1/\sqrt{12} $ and $ h(\alpha) > 0 $ if $ | \alpha - 1/2 | > 1/\sqrt{12} $, establishing parts \textit{i)} and \textit{iii)} of the claim. The part \textit{ii)} follows by observing that the values $ \alpha = 1/2 \pm 1/\sqrt{12} $ satisfy $ \gamma = 3 \beta $, yielding
	\[
	f(x) = \frac{3 + 6 \beta x + 3 \beta^2 x^2 }{(1 + \beta x)^2} = 3.
	\]
\end{proof}

\begin{proof}[Proof of Theorem \ref{theo:matrix_lda_optimal_higher_rank}]
	
		For arbitrary $ \textbf{u} \in \mathbb{R}^p $ and $ \textbf{v} \in \mathbb{R}^q $, the projection $ \textbf{u}' \textbf{X} \textbf{v} $ has
	\begin{align*}
	\textbf{u}' \textbf{X} \textbf{v} &\sim \alpha_1 \mathcal{N}( \textbf{u}' \textbf{T}_1 \textbf{v}, \textbf{u}' \textbf{A} \textbf{u} \textbf{v}' \textbf{B} \textbf{v}) + \alpha_2 \mathcal{N}( \textbf{u}' \textbf{T}_2 \textbf{v}, \textbf{u}' \textbf{A} \textbf{u} \textbf{v}' \textbf{B} \textbf{v})\\
	&=  \alpha_1 \mathcal{N}( m_1(\textbf{u}, \textbf{v}), s^2(\textbf{u}, \textbf{v}) ) +  \alpha_2 \mathcal{N}( m_2(\textbf{u}, \textbf{v}), s^2(\textbf{u}, \textbf{v}) ),
	\end{align*}
	where $ m_i(\textbf{u}, \textbf{v}) := \textbf{u}' \textbf{T}_i \textbf{v} $ and $ s^2(\textbf{u}, \textbf{v}) := \textbf{u}' \textbf{A} \textbf{u} \textbf{v}' \textbf{B} \textbf{v} $. The kurtosis of the projection $ \textbf{u}' \textbf{X} \textbf{v} $ is now, by the proof of Theorem 1 in \cite{pena2017clustering},
	\[
	\kappa_{\textbf{X}}(\textbf{u}, \textbf{v}) = \frac{3 s^4(\textbf{u}, \textbf{v}) + 6 a_2(\textbf{u}, \textbf{v}) s^2(\textbf{u}, \textbf{v}) + a_4(\textbf{u}, \textbf{v})}{[s^2(\textbf{u}, \textbf{v}) + a_2(\textbf{u}, \textbf{v})]^2},
	\]
	where
	\[
	a_d(\textbf{u}, \textbf{v}) := \sum_{i = 1}^2 \alpha_i \left( m_i(\textbf{u}, \textbf{v}) - \sum_{j = 1}^2 \alpha_j m_j(\textbf{u}, \textbf{v}) \right)^d.
	\]
	Denoting next $ \textbf{H} := \textbf{T}_2 - \textbf{T}_1 $ and  $ \bar{\textbf{T}} = \sum_{i = 1}^2 \alpha_i \textbf{T}_i $, we have $ \textbf{T}_1 - \bar{\textbf{T}} = -\alpha_2 \textbf{H} $ and $ \textbf{T}_2 - \bar{\textbf{T}} = \alpha_1 \textbf{H} $, and the terms $ a_2(\textbf{u}, \textbf{v}) $, $ a_4(\textbf{u}, \textbf{v}) $ can be written as
	\begin{align*}
	a_2(\textbf{u}, \textbf{v}) &= \alpha_1 \alpha_2 (\textbf{u}' \textbf{H} \textbf{v})^2 \\
	a_4(\textbf{u}, \textbf{v}) &= \alpha_1 \alpha_2 (\alpha_1^3 + \alpha_2^3) (\textbf{u}' \textbf{H} \textbf{v})^4.
	\end{align*}
	Let then $ t := \textbf{u}' \textbf{A} \textbf{u} \textbf{v}' \textbf{B} \textbf{v} $, $ \beta := \alpha_1 \alpha_2 $, $ \gamma := (\alpha_1^3 + \alpha_2^3) $ and, additionally, $ w := (\textbf{u}' \textbf{H} \textbf{v})^2 $. Under this notation, the kurtosis of the projection reads,
	\begin{align}\label{eq:wt_form}
	\kappa_{\textbf{X}}(\textbf{u}, \textbf{v}) = \frac{3 (t + \beta w)^2 + \beta (\gamma - 3 \beta) w^2}{(t + \beta w)^2} = 3 + \beta (\gamma - 3 \beta) \left\{ \frac{(w/t)}{1 + \beta (w/t)} \right\}^2.
	\end{align}
	Assume now that $ \alpha_1 \in \mathcal{A}_0 $. Then, by the proof of Lemma \ref{lem:rational_function}, $ \gamma - 3 \beta = 0 $, yielding  $ \kappa_{\textbf{X}}(\textbf{u}, \textbf{v}) = 3 $, regardless of the directions $ \textbf{u} $ and $ \textbf{v} $, and showing part \textit{ii)} of the claim.
	
	Assume next that $ \alpha_1 \in \mathcal{A}_{\mathrm{min}} \setminus \{ \frac{1}{2} \} $ and let $ z := w/t \geq 0$. Then, again by the proof of Lemma \ref{lem:rational_function}, $ \gamma - 3 \beta < 0 $ and, consequently, $ \kappa_{\textbf{X}}(\textbf{u}, \textbf{v}) $ is minimized when the function $ g: [0, \infty) \rightarrow [0, \infty) $, acting as
	\[
	g(z) = \frac{z}{1 + \beta z},
	\]
	achieves its maximal value. Now, $ g'(z) = (1 + \beta z)^{-2} > 0 $, for all $ z \geq 0 $, showing that $ g $ is a strictly increasing function. The direction with the minimal kurtosis is thus the one giving the largest value of $ z $.
	
	Now, we have
	\begin{align}\label{eq:variational_inequality}
	z = \frac{w}{t} = \left\{ \left( \frac{\textbf{A}^{1/2} \textbf{u}}{\|\textbf{A}^{1/2} \textbf{u}\|}  \right)' \textbf{A}^{-1/2} \textbf{H} \textbf{B}^{-1/2} \left( \frac{\textbf{B}^{1/2} \textbf{v}}{\|\textbf{B}^{1/2} \textbf{v}\|}  \right) \right\}^2. 
	\end{align}
	By the variational characterization of singular values, the expression \eqref{eq:variational_inequality} is maximized if and only if $ \textbf{A}^{1/2} \textbf{u} $ and $ \textbf{B}^{1/2} \textbf{v} $ are proportional, respectively, to members of any pair of unit length singular vectors, $ \textbf{u}_{01} $ and $ \textbf{v}_{01} ,$ of $ \textbf{A}^{-1/2} \textbf{H} \textbf{B}^{-1/2} $ corresponding to its largest singular value $ \sigma_1 $ (that is, $ \textbf{A}^{-1/2} \textbf{H} \textbf{B}^{-1/2} \textbf{v}_{01} = \sigma_1 \textbf{u}_{01} $). Hence,
	\begin{align}\label{eq:solution_1_kappa}
	(\textbf{u}_1, \textbf{v}_1) = \left( s_{\textbf{u}1} \frac{\textbf{A}^{-1/2} \textbf{u}_{01}}{\|\textbf{A}^{-1/2} \textbf{u}_{01}\|}, s_{\textbf{v}1} \frac{\textbf{B}^{-1/2} \textbf{v}_{01}}{\|\textbf{B}^{-1/2} \textbf{v}_{01}\|} \right),
	\end{align} 
	for some signs $ s_{\textbf{u}1}, s_{\textbf{v}1} \in \{ -1, 1 \} $. Having obtained the first pair, the second one is found by maximizing $ z $ under the constraints $ \textbf{u}_2' \textbf{G}_{11,\textbf{X}} \textbf{u}_1 = 0 $ and $ \textbf{v}_2' \textbf{G}_{21,\textbf{X}} \textbf{v}_1 = 0 $, where
	\[ 
	\textbf{G}_{11,\textbf{X}} =  \mathrm{E} \left[ \{ \textbf{X} - \mathrm{E}(\textbf{X}) \} \textbf{v}_1 \textbf{v}_1'  \{ \textbf{X} - \mathrm{E}(\textbf{X}) \}' \right],
	\]
	and
	\[ 
	\textbf{G}_{21,\textbf{X}} =  \mathrm{E} \left[ \{ \textbf{X} - \mathrm{E}(\textbf{X}) \}' \textbf{u}_1 \textbf{u}_1'  \{ \textbf{X} - \mathrm{E}(\textbf{X}) \} \right].
	\]
	We next show that the condition $ \textbf{u}_2' \textbf{G}_{11,\textbf{X}} \textbf{u}_1 = 0 $ is equivalent to $ \textbf{u}_2' \textbf{A}^{1/2} \textbf{u}_{01} = 0 $. Observing that,
	\[ 
	\{ \textbf{X} - \mathrm{E}(\textbf{X}) \} \textbf{v}_1 \sim \alpha_1 \mathcal{N}_p(-\alpha_2 \textbf{H} \textbf{v}_1, \textbf{v}_1' \textbf{B} \textbf{v}_1 \cdot \textbf{A}) + \alpha_2 \mathcal{N}_p(\alpha_1 \textbf{H} \textbf{v}_1, \textbf{v}_1' \textbf{B} \textbf{v}_1 \cdot \textbf{A}),
	\]
	we get
	\[ 
	\textbf{G}_{11,\textbf{X}} = \textbf{v}_1' \textbf{B} \textbf{v}_1 \cdot \textbf{A} + \beta  \textbf{H} \textbf{v}_1 \textbf{v}_1' \textbf{H}'. 
	\]
	Hence, plugging in \eqref{eq:solution_1_kappa} and by the properties of SVD,
	\begin{align*}
	\textbf{u}_2' \textbf{G}_{11,\textbf{X}} \textbf{u}_1 =& \|\textbf{B}^{-1/2} \textbf{v}_{01}\|^{-2} \|\textbf{A}^{-1/2} \textbf{u}_{01}\|^{-1}  \left( 1 + \beta \sigma_1^2 \right) s_{\textbf{u}1} \textbf{u}_2' \textbf{A}^{1/2} \textbf{u}_{01}
	\end{align*}
	The above shows that indeed $ \textbf{u}_2' \textbf{G}_{11,\textbf{X}} \textbf{u}_1 = 0 $ if and only if $ \textbf{u}_2'  \textbf{A}^{1/2} \textbf{u}_{01} = 0 $. One can similarly show that $ \textbf{v}_2' \textbf{G}_{21,\textbf{X}} \textbf{v}_1 = 0 $ holds if and only if $ \textbf{v}_2' \textbf{B}^{1/2} \textbf{v}_{01} = 0 $. Thus, again by the variational characterization of singular values, \eqref{eq:variational_inequality} is maximized under the constraints that $ \textbf{A}^{1/2} \textbf{u} $ be orthogonal to $ \textbf{u}_{01} $ and that $ \textbf{B}^{1/2} \textbf{v} $ be orthogonal to $ \textbf{v}_{01} $ if and only if $ \textbf{A}^{1/2} \textbf{u} $ and $ \textbf{B}^{1/2} \textbf{v} $ are proportional, respectively, to members of any pair of unit length singular vectors, $ \textbf{u}_{02} $ and $ \textbf{v}_{02} ,$ of $ \textbf{A}^{-1/2} \textbf{H} \textbf{B}^{-1/2} $ corresponding to its second largest singular value $ \sigma_2 $ (which may be equal to $ \sigma_1 $). That is,
	\[ 
	(\textbf{u}_2, \textbf{v}_2) = \left( s_{\textbf{u}2} \frac{\textbf{A}^{-1/2} \textbf{u}_{02}}{\|\textbf{A}^{-1/2} \textbf{u}_{02}\|}, s_{\textbf{v}2} \frac{\textbf{B}^{-1/2} \textbf{v}_{02}}{\|\textbf{B}^{-1/2} \textbf{v}_{02}\|} \right),
	\]
	for some signs $ s_{\textbf{u}2}, s_{\textbf{v}2} \in \{ -1, 1 \} $.
	Continuing analogously, we finally get that, for all $ j = 1, \ldots , d $,
	\begin{align}\label{eq:kappa_uv_solution}
	(\textbf{u}_j, \textbf{v}_j) = \left( s_{\textbf{u}j} \frac{\textbf{A}^{-1/2} \textbf{u}_{0j}}{\|\textbf{A}^{-1/2} \textbf{u}_{0j}\|}, s_{\textbf{v}j} \frac{\textbf{B}^{-1/2} \textbf{v}_{0j}}{\|\textbf{B}^{-1/2} \textbf{v}_{0j}\|} \right),
	\end{align}
	for some signs $ s_{\textbf{u}j}, s_{\textbf{v}j} \in \{ -1, 1 \} $, where $ ( \textbf{u}_{0j},  \textbf{v}_{0j} ) $ is a pair of unit length singular vectors of $ \textbf{A}^{-1/2} \textbf{H} \textbf{B}^{-1/2} $ corresponding to its $ j $th largest singular value $ \sigma_j $.
	
	Consider next the quantities $ \mathrm{E} ( [ \textbf{u}_j' \{ \textbf{X} - \mathrm{E}(\textbf{X})  \} \textbf{v}_j  ]^2 ) $ in the denominator of the decomposition in part \textit{i)} of the lemma. By \eqref{eq:kappa_uv_solution}, the projection $ \textbf{u}_j' \{ \textbf{X} - \mathrm{E}(\textbf{X})  \} \textbf{v}_j $ has the distribution
	\begin{align}\label{eq:uXv_distribution} 
	\textbf{u}_j' \{ \textbf{X} - \mathrm{E}(\textbf{X})  \} \textbf{v}_j \sim \alpha_1 \mathcal{N}(-\alpha_2 s_{\textbf{u}j} s_{\textbf{v}j} c_j \sigma_j, c_j^2) + \alpha_2 \mathcal{N}(\alpha_1 s_{\textbf{u}j} s_{\textbf{v}j} c_j \sigma_j, c_j^2),
	\end{align}
	where $ c_j = \|\textbf{B}^{-1/2} \textbf{v}_{0j}\|^{-1} \|\textbf{A}^{-1/2} \textbf{u}_{0j}\|^{-1} $. This gives
	\[ 
	\mathrm{E} ( [ \textbf{u}_j' \{ \textbf{X} - \mathrm{E}(\textbf{X})  \} \textbf{v}_j  ]^2 ) = c_j^2 (1 + \beta \sigma_j^2).
	\]
	
	Next, we simplify $ \lambda_j $ and $ \theta_j $ (defined just before the statement of Theorem \ref{theo:matrix_lda_optimal_higher_rank}). Given the $ j $th pair of optimizers $ (\textbf{u}_j, \textbf{v}_j) $,
	\[ 
	\kappa_{\textbf{X}}(\textbf{u}_j, \textbf{v}_j) = 3 + \beta (\gamma - 3 \beta) \left( \frac{z}{1 + \beta z} \right)^2,
	\] 
	where $ z $ is computed by substituting $ (\textbf{u}_j, \textbf{v}_j) $ to \eqref{eq:variational_inequality} and equals, by the variational characterization of singular values, $ \sigma_j^2 $. Hence, using the identity $\gamma = 1 - 3 \beta$, we get $ \theta_j = \sigma_j^2/(1 + \beta \sigma_j^2) $, and plugging in to the definition of $ \lambda_j $ shows that $ \lambda_j = \sigma_j $.
	
	Combining all the previous gives,
	\begin{align}\label{eq:kappa_final_equation}
	\begin{split}
	& \sum_{j=1}^d \frac{s_j \lambda_j \sqrt{1 + \alpha_1 \alpha_2 \lambda_j^2} }{\sqrt{\mathrm{E} ( [ \textbf{u}_j' \{ \textbf{X} - \mathrm{E}(\textbf{X})  \} \textbf{v}_j  ]^2 )}} \textbf{u}_j \textbf{v}_j'\\
	=& \textbf{A}^{-1/2} \left( \sum_{j=1}^d \frac{s_j  \sigma_j\sqrt{1 + \beta \sigma_j^2}}{ c_j \sqrt{1 + \beta \sigma_j^2}} s_{\textbf{u}j} s_{\textbf{v}j} c_j  \textbf{u}_{0j} \textbf{v}_{0j}' \right) \textbf{B}^{-1/2} \\
	=& \textbf{A}^{-1/2} \left( \sum_{j=1}^d s_j s_{\textbf{u}j} s_{\textbf{v}j} \sigma_j \textbf{u}_{0j} \textbf{v}_{0j}' \right) \textbf{B}^{-1/2},
	\end{split}
	\end{align}
	where $ s_1, \ldots, s_d \in \{ -1, 1 \}$ are the signs of the quantities 
	\begin{align}\label{eq:kappa_sign_choosing}
	(\alpha_1 - \alpha_2)^{-1} \mathrm{E} ( [ \textbf{u}_j' \{ \textbf{X} - \mathrm{E}(\textbf{X})  \} \textbf{v}_j  ]^3 ).
	\end{align} 
	Now, by \eqref{eq:uXv_distribution} and the moment formulas for normal distribution,
	\[ 
	(\alpha_1 - \alpha_2)^{-1} \mathrm{E} ( [ \textbf{u}_j' \{ \textbf{X} - \mathrm{E}(\textbf{X}) \} \textbf{v}_j ]^3 ) = \beta (\alpha_1 + \alpha_2) s_{\textbf{u}j} s_{\textbf{v}j} c_j^3 \sigma_j^3,
	\]
	which is non-zero and has the same sign as $ s_{\textbf{u}j} s_{\textbf{v}j} $, implying that $ s_j $ satisfies $ s_j s_{\textbf{u}j} s_{\textbf{v}j} = 1 $. Plugging this in into \eqref{eq:kappa_final_equation} gives,
	\[ 
	\sum_{j=1}^d \frac{s_j \lambda_j \sqrt{1 + \alpha_1 \alpha_2 \lambda_j^2} }{\sqrt{\mathrm{E} ( [ \textbf{u}_j' \{ \textbf{X} - \mathrm{E}(\textbf{X})  \} \textbf{v}_j  ]^2 )}} \textbf{u}_j \textbf{v}_j' = \textbf{A}^{-1/2} \left( \sum_{j=1}^d \sigma_j \textbf{u}_{0j} \textbf{v}_{0j}' \right) \textbf{B}^{-1/2} = \textbf{A}^{-1} \textbf{H} \textbf{B}^{-1} = \textbf{W}_{\mathrm{LDA}},
	\]
	concluding the proof for part \textit{i)} of the claim. The proof for part \textit{iii)} is exactly analogous and we omit it. 
\end{proof}

\begin{proof}[Proof of Lemma~\ref{lemma:lemma3}]
    
By \eqref{eq:wt_form}, we have that $ \kappa_{\textbf{X}}(\textbf{u}, \textbf{v}) = 3 + \beta (\gamma - 3 \beta) \{ h(\textbf{u}, \textbf{v}) + \beta \}^{-2} $, where $h(\textbf{u}, \textbf{v}) := \textbf{u}' \textbf{A} \textbf{u} \cdot \textbf{v}' \textbf{B} \textbf{v} \cdot (\textbf{u}' \textbf{H} \textbf{v})^{-2} $. By the chain rule, the gradient of $\kappa_{\textbf{X}}$ is proportional to the gradient of the function $ h $, whose $\textbf{u}$-part can, by a straightforward computation, be seen to equal,
\begin{align}\label{eq:gradient_population}
    2 \textbf{v}' \textbf{B} \textbf{v} (\textbf{u}' \textbf{H} \textbf{v})^{-3} (\textbf{u}' \textbf{H} \textbf{v} \cdot \textbf{A} \textbf{u} - \textbf{u}' \textbf{A} \textbf{u} \cdot \textbf{H} \textbf{v}).
\end{align}
By the proof of Theorem \ref{theo:matrix_lda_optimal_higher_rank}, $(\textbf{u}_k, \textbf{v}_k) = (c_{ku} \textbf{A}^{-1/2} \textbf{u}_{0k}, c_{kv} \textbf{B}^{-1/2} \textbf{v}_{0k})$, where $c_{ku}, c_{kv}$ are non-zero constants and $(\textbf{u}_{0k}, \textbf{v}_{0k})$ is a unit length singular pair of the matrix $\textbf{A}^{-1/2} \textbf{H} \textbf{B}^{-1/2}$. Plugging in to \eqref{eq:gradient_population} now shows that the $\textbf{u}$-part of the gradient vanishes and, by symmetry, this happens also the $\textbf{v}$-part, concluding the proof. 
\end{proof}

\begin{proof}[Proof of Theorem \ref{theo:matrix_lda_optimal_mode_wise}]
	
	Let again $ \bar{\textbf{T}} = \sum_{i = 1}^2 \alpha_i \textbf{T}_i $. Then, denoting $ \textbf{y} := \tilde{\textbf{X}}' \textbf{u} $, we have
	\[
	\textbf{y} \sim \alpha_1 \mathcal{N}_q(\textbf{m}_1, \boldsymbol{\Sigma}) + \alpha_2 \mathcal{N}_q(\textbf{m}_2, \boldsymbol{\Sigma}),
	\]
	where $ \textbf{m}_1 := (\textbf{T}_1 - \bar{\textbf{T}})' \textbf{u} = - \alpha_2 \textbf{H}' \textbf{u} $, $ \textbf{m}_2 := (\textbf{T}_2 - \bar{\textbf{T}})' \textbf{u} = \alpha_1 \textbf{H}' \textbf{u} $, $ \boldsymbol{\Sigma} := (\textbf{u}' \textbf{A} \textbf{u}) \textbf{B}  $ and $ \textbf{H} := \textbf{T}_2 - \textbf{T}_1 $. Let next
	\[
	\textbf{y}_1 \sim \mathcal{N}_q(\textbf{m}_1, \boldsymbol{\Sigma}) \quad \mbox{and} \quad \textbf{y}_2 \sim \mathcal{N}_q(\textbf{m}_2, \boldsymbol{\Sigma}).
	\]
	Then we have
	\begin{align*}
	\mathrm{E} \left( \tilde{\textbf{X}}' \textbf{u} \textbf{u}' \tilde{\textbf{X}} \right) =& \alpha_1 \mathrm{E}(\textbf{y}_1 \textbf{y}_1' ) + \alpha_2 \mathrm{E}(\textbf{y}_2 \textbf{y}_2' ) \\
	=& \boldsymbol{\Sigma} + \alpha_1 \textbf{m}_1 \textbf{m}_1' + \alpha_2 \textbf{m}_2 \textbf{m}_2'\\
	=& \boldsymbol{\Sigma} + \alpha_1 \alpha_2 \textbf{H}' \textbf{u} \textbf{u}' \textbf{H} \\
	=:& \textbf{G}
	\end{align*}
	Thus, the kurtosis $ \psi_{\textbf{X}}(\textbf{u}) $ in the direction $ \textbf{u} $ equals
	\begin{align*}
	\psi_{\textbf{X}}(\textbf{u}) =& \mathrm{E}\left\{ \left( \textbf{y}' \textbf{G}^{-1} \textbf{y} \right)^2  \right\} = \alpha_1 \mathrm{E}\left\{ \left( \textbf{y}_1' \textbf{G}^{-1} \textbf{y}_1 \right)^2  \right\} + \alpha_2 \mathrm{E}\left\{ \left( \textbf{y}_2' \textbf{G}^{-1} \textbf{y}_2 \right)^2  \right\}.
	\end{align*}
	Using next the formulas for the moments of quadratic forms of normal random vectors (see, e.g., Theorem 3.2b.2 in \cite{mathai1992quadratic}) and simplifying gives the following expression for the kurtosis $ \psi_{\textbf{X}}(\textbf{u}) $,
	\begin{align*}
	\psi_{\textbf{X}}(\textbf{u}) =& 2 \mathrm{tr}\{ (\textbf{G}^{-1} \boldsymbol{\Sigma} )^2 \} + 4 \beta \textbf{u}' \textbf{H} \textbf{G}^{-1} \boldsymbol{\Sigma} \textbf{G}^{-1} \textbf{H}' \textbf{u} + \{ \mathrm{tr}(\textbf{G}^{-1} \boldsymbol{\Sigma}) \}^2 \\
	&+ 2 \beta  \mathrm{tr}(\textbf{G}^{-1} \boldsymbol{\Sigma}) \textbf{u}' \textbf{H} \textbf{G}^{-1} \textbf{H}' \textbf{u} + \beta  \gamma (\textbf{u}' \textbf{H} \textbf{G}^{-1} \textbf{H}' \textbf{u})^2,
	\end{align*}
	where $ \beta := \alpha_1 \alpha_2 $ and $ \gamma := \alpha_1^3 + \alpha_2^3 $. Writing the same using the parametrization
	\[
	\textbf{u}_0 := \textbf{A}^{1/2} \textbf{u} \quad \mbox{and} \quad \textbf{H}_0 := \textbf{A}^{-1/2} \textbf{H} \textbf{B}^{-1/2},
	\]
	gives
	\begin{align}\label{eq:0_version_psi}
	\begin{split}
	\psi_{\textbf{X}}(\textbf{u}) =& 2 \| \textbf{u}_0 \|^4 \mathrm{tr}( \textbf{G}_0^{-2} ) + 4 \beta \| \textbf{u}_0 \|^2 \textbf{u}_0' \textbf{H}_0 \textbf{G}_0^{-2} \textbf{H}_0' \textbf{u}_0 + \| \textbf{u}_0 \|^4 \{ \mathrm{tr}( \textbf{G}_0^{-1} ) \}^2 \\
	&+ 2 \beta \| \textbf{u}_0 \|^2 \mathrm{tr}( \textbf{G}_0^{-1} ) \textbf{u}'_0 \textbf{H}_0 \textbf{G}_0^{-1} \textbf{H}_0' \textbf{u}_0 + \beta  \gamma (\textbf{u}_0' \textbf{H}_0 \textbf{G}_0^{-1}  \textbf{H}_0' \textbf{u}_0)^2,
	\end{split}
	\end{align}
	where $ \textbf{G}_0 := \| \textbf{u}_0 \|^2 \textbf{I}_q + \beta \textbf{H}_0' \textbf{u}_0 \textbf{u}_0' \textbf{H}_0 $. By the Sherman-Morrison formula, the inverse of $ \textbf{G}_0 $ is
	\[
	\textbf{G}_0^{-1} = \| \textbf{u}_0 \|^{-2} \left( \textbf{I}_q - \frac{\beta \| \textbf{u}_0 \|^{-2} \textbf{H}_0' \textbf{u}_0 \textbf{u}_0' \textbf{H}_0}{1 + \beta \| \textbf{u}_0 \|^{-2} \textbf{u}_0' \textbf{H}_0 \textbf{H}_0' \textbf{u}_0 } \right).
	\]
	Denoting $ z := \| \textbf{u}_0 \|^{-2} \textbf{u}_0' \textbf{H}_0 \textbf{H}_0' \textbf{u}_0 $ and plugging the inverse in to \eqref{eq:0_version_psi}, we get
	\begin{align*}
	\begin{split}
	\psi_{\textbf{X}}(\textbf{u}) &= 2 \left\{ q - \frac{2\beta z}{1 + \beta z} + \frac{\beta^2 z^2}{(1 + \beta z)^2} \right\} + 4 \beta z \left\{ 1 -  \frac{2 \beta z}{1 + \beta z} + \frac{\beta^2 z^2}{(1 + \beta z)^2} \right\} + \left( q - \frac{\beta z}{1 + \beta z} \right)^2 \\
	&+ 2 \beta z \left( q - \frac{\beta z}{1 + \beta z} \right) \left( 1 - \frac{\beta z}{1 + \beta z} \right) + \beta^{-1} \gamma \left(\beta z - \frac{\beta^2 z^2}{1 + \beta z} \right)^2,
	\end{split}
	\end{align*}
	which simplifies to,
	\[
	\psi_{\textbf{X}}(\textbf{u}) = \frac{q(q + 2)(1 + \beta z)^2 + \beta (\gamma - 3 \beta) z^2}{(1 + \beta z)^2} = q(q + 2) + \beta (\gamma - 3 \beta) \left( \frac{z}{1 + \beta z} \right)^2.
	\]
	The final part of the proof follows similarly as in the proof of Theorem~\ref{theo:matrix_lda_optimal_higher_rank} and, thus, we next go over only the main steps.
	
	Letting $\textbf{R} := \textbf{A}^{-1/2} \textbf{H} \textbf{B}^{-1/2} $, the SVD,
	\begin{align*}
	    \textbf{R} = \sum_{j = 1}^d \sigma_j \textbf{u}_{0j} \textbf{v}_{0j}',
	\end{align*}
	is unique up to signs of the pairs of singular vectors. Now, 
	\[
	z = \left( \frac{\textbf{A}^{1/2} \textbf{u}}{\|\textbf{A}^{1/2} \textbf{u}\|}  \right)' \textbf{R} \textbf{R}' \left( \frac{\textbf{A}^{1/2} \textbf{u}}{\|\textbf{A}^{1/2} \textbf{u}\|}  \right),
	\]
	and the eigenvectors of $\textbf{R}\textbf{R}'$ are the left singular vectors of $\textbf{R}$ and the $d$ non-zero eigenvalues of $\textbf{R}\textbf{R}'$ are the squared non-zero singular values of $\textbf{R}$ (and, hence, distinct). Hence, by the variational characterization of eigenvalues, the maximizers of $z$ are precisely all $\textbf{u}_1$ such that,
	\begin{align*}
	     \textbf{u}_1 = s_{\textbf{u}1} \frac{\textbf{A}^{-1/2} \textbf{u}_{01}}{\|\textbf{A}^{-1/2} \textbf{u}_{01}\|},
	\end{align*}
	where $ s_{\textbf{u}1} \in \{ -1, 1 \} $. Next, from the proof of Theorem \ref{theo:matrix_lda_optimal_higher_rank} we know that the condition $ \textbf{u}_2' \textbf{G}_{\textbf{X}, 1} \textbf{u}_1 = 0 $ is equivalent to $ \textbf{u}_2' \textbf{A}^{1/2} \textbf{u}_{01} = 0 $. Thus, continuing as in the proof of Theorem \ref{theo:matrix_lda_optimal_higher_rank} (but with the variational characterization of eigenvalues instead of singular values), we finally get that, for all $j = 1, \ldots , d$,
	\begin{align*}
	     \textbf{u}_j = s_{\textbf{u}j} \frac{\textbf{A}^{-1/2} \textbf{u}_{0j}}{\|\textbf{A}^{-1/2} \textbf{u}_{0j}\|},
	\end{align*}
	for some signs $ s_{\textbf{u}j} \in \{ -1, 1 \} $. Applying the previous reasoning to the transposed matrix $\textbf{X}$ then also gives that, for all $ j = 1, \ldots , d $,
	\begin{align*}
	\textbf{v}_j = s_{\textbf{v}j} \frac{\textbf{B}^{-1/2} \textbf{v}_{0j}}{\|\textbf{B}^{-1/2} \textbf{v}_{0j}\|},
	\end{align*}
	for some signs $ s_{\textbf{v}j} \in \{ -1, 1 \} $.
	
	The remainder of the proof follows now the same steps as the proof of Theorem \ref{theo:matrix_lda_optimal_higher_rank} (apart from the slight changes in the definition of $\theta_j$) and, as such, we omit it. 

\end{proof}

\begin{proof}[Proof of Theorem~\ref{theo:matrix_lda_mpca}]
\textit{i)} As discussed, $\textbf{u}' \{ \textbf{X}-\mathbb{E}(\textbf{X}) \} \textbf{v}\sim\alpha_1 \mathcal{N}\{ m_1(\textbf{u},\textbf{v}),s^2(\textbf{u},\textbf{v}) \}+\alpha_2 \mathcal{N} \{ m_2(\textbf{u},\textbf{v}),s^2(\textbf{u},\textbf{v}) \}$, where $s^2(\textbf{u},\textbf{v})=\textbf{u}'\textbf{A}\textbf{u} \cdot \textbf{v}'\textbf{B}\textbf{v}$, $m_1(\textbf{u},\textbf{v})=\alpha_2\textbf{u}'\textbf{H}\textbf{v}$, $m_2(\textbf{u},\textbf{v})=-\alpha_1 \textbf{u}'\textbf{H}\textbf{v}$ and $\textbf{H}=\textbf{T}_1-\textbf{T}_2$. Therefore,
$$
\kappa_{2,\textbf{X}}(\textbf{u},\textbf{v})=\textbf{u}'\textbf{A}\textbf{u} \cdot \textbf{v}'\textbf{B}\textbf{v}+\alpha_1\alpha_2(\textbf{u}'\textbf{H}\textbf{v})^2.
$$
Observe that $\kappa_{2,\textbf{X}}$ is continuously differentiable in $\textbf{u}$ and $\textbf{v}$. 
 Therefore, all minima and maxima of $\kappa_{2,\textbf{X}}$ are stationary points of the Lagrange function
$$
f(\textbf{u},\textbf{v};\theta_1,\theta_2)=\textbf{u}'\textbf{A}\textbf{u} \cdot \textbf{v}'\textbf{B}\textbf{v}+\alpha_1\alpha_2(\textbf{u}'\textbf{H}\textbf{v})^2-\theta_1(\textbf{u}'\textbf{u}-1)-\theta_2(\textbf{v}'\textbf{v}-1),
$$
whose partial derivatives with respect to $\textbf{u}$ and $\textbf{v}$ are given by
\begin{align*}
\partial_u f(\textbf{u},\textbf{v};\theta_1,\theta_2)&=2(\textbf{v}'\textbf{B}\textbf{v})\textbf{A}\textbf{u}+2\alpha_1\alpha_2(\textbf{u}'\textbf{H}\textbf{v})\textbf{H}\textbf{v}-2\theta_1\textbf{u},\\
\partial_v f(\textbf{u},\textbf{v};\theta_1,\theta_2)&=2(\textbf{u}'\textbf{A}\textbf{u})\textbf{B}\textbf{v}+2\alpha_1\alpha_2(\textbf{u}'\textbf{H}\textbf{v})\textbf{H}'\textbf{u}-2\theta_2\textbf{v}.
\end{align*}
In order to find the necessary conditions for the standardized LDA optimal directions $(\pm \textbf{u}_{\mathrm{LDA}}, \pm \textbf{v}_{\mathrm{LDA}})=(c_1\textbf A^{-1}\textbf a,c_2\textbf B^{-1}\textbf b)$, $c_1= \pm ||\textbf A^{-1}\textbf a||^{-1}$, $c_2= \pm ||\textbf B^{-1}\textbf b||^{-1}$ to be stationary points of $f$ we solve $\nabla f(\textbf{u}_{L},\textbf{v}_{L}) = \textbf{0}$. Hence, it is necessary that
\begin{align*}
c_1c_2^2 (\textbf b'\textbf B^{-1}\textbf B\textbf B^{-1}\textbf b)\textbf A\textbf A^{-1}\textbf a+c_1c_2^2\alpha_1\alpha_2(\textbf a'\textbf A^{-1}\textbf H\textbf B^{-1}\textbf{b})\textbf H\textbf B^{-1}\textbf b-c_1\theta_1\textbf A^{-1}\textbf a=&\textbf{0}\\
c_1^2c_2 (\textbf a'\textbf A^{-1}\textbf A\textbf A^{-1}\textbf a)\textbf B\textbf B^{-1}\textbf b+c_1^2c_2\alpha_1\alpha_2(\textbf a'\textbf A^{-1}\textbf H\textbf B^{-1}\textbf b)\textbf H' \textbf A^{-1}\textbf a-c_2\theta_2\textbf B^{-1}\textbf b=&\textbf{0}.
\end{align*}
Taking into account that $\textbf H=\textbf a\textbf b'$, the first part above then implies that
$
c_2^2 (\textbf{b}' \textbf{B}^{-1} \textbf{b}) \textbf a+c_2^2\alpha_1\alpha_2 (\textbf{b}' \textbf{B}^{-1} \textbf{b})^2 (\textbf{a}' \textbf{A}^{-1} \textbf{a}) \textbf a-\theta_1\textbf A^{-1}\textbf a=\textbf{0}$. Multiplication by $\textbf{A}$ now reveals that $\textbf{a}$ is an eigenvector of $\textbf{A}$. By symmetry, also $\textbf{b}$ then has to be an eigenvector of $\textbf B$.

\noindent\textit{ii)} Assume that $\textbf a$ and $\textbf b$ are eigenvectors of $\textbf A$ and $\textbf B$, respectively, corresponding to the simple eigenvalues $\sigma_{\textbf a}$ and $\lambda_{\textbf{b}}$. Hence, $\textbf a$ and $\textbf b$ are also eigenvectors of $\textbf A^{-1}$ and $\textbf B^{-1}$, respectively, implying that $\textbf{u}_{\mathrm{LDA}}= \frac{\textbf A^{-1}\textbf a}{||\textbf A^{-1}\textbf a||}=\frac{\textbf{a}}{||\textbf{a}||}$ and $\textbf{v}_{\mathrm{LDA}}= \frac{\textbf B^{-1}\textbf b}{||\textbf B^{-1}\textbf b||}=\frac{\textbf{b}}{||\textbf{b}||}$. Next, denote any sets of orthonormal eigenvectors of $\textbf{A}$ and $\textbf B$ as $\textbf u_1,\dots, \textbf u_p$ and $\textbf v_1,\dots, \textbf v_q$, respectively. Moreover, let the corresponding eigenvalues be $\sigma_1\geq\sigma_2\geq\cdots\geq\sigma_p> 0$ and $\lambda_1\geq\lambda_2\geq\cdots\geq\lambda_q> 0$, respectively. Therefore, for every $\textbf u\in\mathbb{R}^p$, $\|\textbf u\|=1$ there exists $\boldsymbol{\beta}=(\beta_1,\ldots, \beta_p)$, $\sum_{i=1}^p\beta_i^2=1$, such that $\textbf u=\sum_{i=1}^p\beta_i\textbf u_i$. Observe, furthermore, that since $\textbf u_{\textbf{a}}$ belongs to simple eigenvalue, we must have $\textbf{u}_{\mathrm{LDA}} = \pm \textbf u_k$ for some $k\in\{1,\ldots, p\}$ and the corresponding eigenvalue has $\sigma_{\textbf a} = \sigma_k$. Similarly, for every $\textbf v\in\mathbb{R}^q$, $\|\textbf v \| = 1$ there exists $\boldsymbol{\gamma}=(\gamma_1,\ldots, \gamma_q)$, $\sum_{j=1}^q\gamma_j^2=1$, such that $\textbf v=\sum_{j=1}^q\gamma_j\textbf v_j$. Moreover, $\textbf{v}_{\mathrm{LDA}} = \pm \textbf v_l$ for some $l\in\{1,\dots q\}$ and the corresponding eigenvalue has $\lambda_{\textbf b} =\lambda_l$.

Define now the function $\kappa:(\mathbb{S}^{p-1} \times \mathbb{S}^{q-1})\to\mathbb{R}_+$, such that $\kappa(\beta_1,\dots,\beta_p,\gamma_1,\dots,\gamma_q)=\kappa_{2,\textbf X}(\sum_i\beta_i\textbf u_i,\sum_j\gamma_j\textbf v_j)$ with
$$
\kappa(\boldsymbol{\beta},\boldsymbol{\gamma})=\sum_{i,j}\beta_i^2\gamma_j^2\sigma_i\lambda_j+\alpha_1\alpha_2||\textbf a||^2||\textbf b||^2\beta_{\textbf{a}}^2\gamma_{\textbf{b}}^2,
$$
where $\beta_{\textbf{a}}$ and $\gamma_{\textbf{b}}$ are the coefficients corresponding to $\textbf{u}_{\mathrm{LDA}}$ and $\textbf{v}_{\mathrm{LDA}}$, respectively. Denote now $\delta_{i,j}:=\beta_i\gamma_j$ and observe that $\sum_{i,j}\delta_{i,j}^2=1$. We then the define function $\tilde\kappa: \mathbb{S}^{pq - 1} \to \mathbb{R}_+$ (with the elements $\textbf{z}$ of $\mathbb{S}^{pq - 1}$ double-indexed as $z_{i, j}$, $i = 1, \ldots , p$, $j = 1, \ldots , q$), such that, 
$$
\tilde\kappa(z)=\sum_{i,j}z_{i,j}^2c_{i,j},
$$ 
where $c_{\textbf a,\textbf b}=\sigma_{\textbf a}\lambda_{\textbf b}+\alpha_1\alpha_2||\textbf a||^2||\textbf b||^2$ and $c_{i,j}=\sigma_i\lambda_j$ otherwise. The following is now true:
\begin{itemize}
    \item[1.] $\max_{\textbf z}\tilde\kappa(\textbf z)\geq \max_{\mathbb{\beta},\mathbb{\gamma}}\kappa(\boldsymbol{\beta},\boldsymbol{\gamma})$;
    \item[2.] $\max_{\textbf z}\tilde\kappa(\textbf z)=\max_{i,j}c_{i,j}=c_{i_0,j_0}$, which is obtained by setting $z_{i_0, j_0}=1$ and $z_{i,j}=0$ otherwise;
    \item[3.] $\max_{\textbf z}\tilde\kappa(\textbf z)=\max\{c_{1,1},c_{\textbf a,\textbf b}\}$.
\end{itemize}
Furthermore, taking $\boldsymbol{\beta}=\pm\textbf e_1$ and $\boldsymbol{\gamma}=\pm\textbf e_1$ we have $\kappa(\boldsymbol{\beta},\boldsymbol{\gamma})=\sigma_1\lambda_1$, while by taking $\boldsymbol{\beta}=\pm\textbf e_{\textbf{a}}$ and $\boldsymbol{\gamma}=\pm \textbf e_{\textbf{b}}$ we get $\kappa(\boldsymbol{\beta},\boldsymbol{\gamma})=\sigma_{\textbf{a}}\lambda_{\textbf{b}}+\alpha_1\alpha_2||\textbf a||^2||\textbf b||^2$, where $\textbf e_1$ denotes the first canonical basis vector and $\textbf e_{\textbf{a}},\textbf e_{\textbf{b}}$ are the canonical basis vectors corresponding to the positions of $\sigma_{\textbf{a}}$ and $\lambda_{\textbf{b}}$ in the ordered sequences of eigenvalues of $\textbf A$ and $\textbf B$, respectively. Now, due to the first point above, we conclude that $\max_{\boldsymbol{\beta},\boldsymbol{\gamma}}\kappa(\boldsymbol{\beta},\boldsymbol{\gamma})=\max\{\sigma_1\lambda_1,\sigma_{\textbf a}\lambda_{\textbf a}+\alpha_1\alpha_2||\textbf a||^2||\textbf b||^2\}$ which is obtained either with $(\boldsymbol{\beta},\boldsymbol{\gamma})=(\pm \textbf e_1, \pm \textbf e_1)$ or in $(\boldsymbol{\beta},\boldsymbol{\gamma})=(\pm \textbf{e}_{\textbf{a}}, \pm \textbf{e}_{\textbf{b}})$, depending on the value of the maximum. Thus, $( \pm \textbf{u}_{\mathrm{LDA}}, \pm \textbf{v}_{\mathrm{LDA}})$ are the unique maximizers of $\kappa_{2,\textbf X}$ if and only if $\sigma_1\lambda_1<\sigma_{\textbf a}\lambda_{\textbf b}+\alpha_1\alpha_2||\textbf a||^2||\textbf b||^2$.
\end{proof}

Before proving Theorem \ref{theo:strong_consistency}, we first present several auxiliary results, starting with the uniform consistency of the sample objective functions. We use the notation $f_n \rightrightarrows f$ to denote that the sequence of functions $f_n: \mathcal{X} \rightarrow \mathbb{R} $ defined on a common domain $\mathcal{X} $ converges to $f: \mathcal{X} \rightarrow \mathbb{R}$ uniformly in $x \in \mathcal{X}$, i.e.,
\begin{align*}
    \sup_{x \in \mathcal{X}} | f_n(x) - f(x) | \rightarrow 0
\end{align*}

\begin{lemma}\label{lem:uniform_convergence_objective_functions}
    We have $\kappa_{n\textbf{X}} \rightrightarrows \kappa_{\textbf{X}}$ a.s., and $\psi_{n\textbf{X}} \rightrightarrows \psi_{\textbf{X}}$ a.s.
\end{lemma}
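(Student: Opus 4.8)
The plan is to establish uniform convergence by controlling each ingredient that enters the definitions of $\kappa_{n\textbf{X}}$ and $\psi_{n\textbf{X}}$ separately, and then patching these together with elementary continuity arguments. First I would handle $\kappa_{n\textbf{X}}$. Note that its numerator and denominator are, up to centering, polynomials of degree four and two in $(\textbf{u},\textbf{v})$ whose coefficients are empirical moments of the form $\frac{1}{n}\sum_i (\textbf{X}_i)_{ab}(\textbf{X}_i)_{cd}\cdots$. By the strong law of large numbers (applicable since $\textbf{X}$ has finite fourth moments) together with the a.s.\ convergence $\bar{\textbf{X}} \to \mathrm{E}(\textbf{X})$, each such empirical coefficient converges a.s.\ to the corresponding population coefficient. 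Since there are only finitely many coefficients and $(\textbf{u},\textbf{v})$ ranges over the compact set $\mathcal{U}_0$, this gives that the numerator $N_n(\textbf{u},\textbf{v}) := \frac{1}{n}\sum_i [\textbf{u}'(\textbf{X}_i-\bar{\textbf{X}})\textbf{v}]^4$ converges uniformly on $\mathcal{U}_0$ to $N(\textbf{u},\textbf{v}) := \mathrm{E}([\textbf{u}'\{\textbf{X}-\mathrm{E}(\textbf{X})\}\textbf{v}]^4)$ a.s., and likewise the denominator base $D_n(\textbf{u},\textbf{v}) := \frac{1}{n}\sum_i [\textbf{u}'(\textbf{X}_i-\bar{\textbf{X}})\textbf{v}]^2$ converges uniformly to $D(\textbf{u},\textbf{v}) := \mathrm{E}([\textbf{u}'\{\textbf{X}-\mathrm{E}(\textbf{X})\}\textbf{v}]^2)$ a.s.

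Next I would invoke the standing assumption of Lemma \ref{lem:existence} that $D(\textbf{u},\textbf{v}) > 0$ on all of $\mathcal{U}_0$; by compactness and continuity, $D$ is bounded below by some $\delta > 0$. Uniform convergence of $D_n$ to $D$ then guarantees that, almost surely, for all large $n$ we have $D_n \geq \delta/2$ uniformly on $\mathcal{U}_0$. On the event where this holds, the map $(N,D) \mapsto N/D^2$ is Lipschitz on the relevant compact range of values (the $N_n, N$ are uniformly bounded, and $D_n, D$ are bounded away from zero), so $\kappa_{n\textbf{X}} = N_n/D_n^2$ converges uniformly to $\kappa_{\textbf{X}} = N/D^2$ a.s. Concretely, one writes
\[
|\kappa_{n\textbf{X}} - \kappa_{\textbf{X}}| \leq \frac{|N_n - N|}{D_n^2} + N\left| \frac{1}{D_n^2} - \frac{1}{D^2}\right|
\]
and bounds each term using the uniform convergences of $N_n, D_n$ and the lower bounds on $D_n, D$.

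For $\psi_{n\textbf{X}}$ the argument is similar but one extra step is needed: the empirical matrix $\textbf{M}_n(\textbf{u}) := \frac{1}{n}\sum_i (\textbf{X}_i-\bar{\textbf{X}})'\textbf{u}\textbf{u}'(\textbf{X}_i-\bar{\textbf{X}})$ has entries that are empirical second moments, hence $\textbf{M}_n(\textbf{u}) \rightrightarrows \textbf{M}(\textbf{u}) := \mathrm{E}(\tilde{\textbf{X}}'\textbf{u}\textbf{u}'\tilde{\textbf{X}})$ a.s.\ on $\mathbb{S}^{p-1}$ by the same SLLN-plus-compactness reasoning. By the well-definedness condition discussed in Appendix \ref{sec:well_defined}, $\textbf{M}(\textbf{u})$ is positive definite for all $\textbf{u}$, so its smallest eigenvalue is bounded below by some $\eta > 0$ on $\mathbb{S}^{p-1}$; uniform convergence of $\textbf{M}_n$ then forces $\textbf{M}_n(\textbf{u})$ to be invertible with $\textbf{M}_n(\textbf{u})^{-1} \rightrightarrows \textbf{M}(\textbf{u})^{-1}$ a.s.\ (matrix inversion being Lipschitz on the set of matrices with smallest eigenvalue $\geq \eta/2$). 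Expanding $\psi_{n\textbf{X}}(\textbf{u}) = \frac{1}{n}\sum_i [\textbf{u}'(\textbf{X}_i-\bar{\textbf{X}})\textbf{M}_n(\textbf{u})^{-1}(\textbf{X}_i-\bar{\textbf{X}})'\textbf{u}]^2$ as a polynomial in the entries of $\textbf{M}_n(\textbf{u})^{-1}$ whose coefficients are empirical fourth moments, one combines the uniform convergence of these empirical moments with the uniform convergence of $\textbf{M}_n^{-1}$ to get $\psi_{n\textbf{X}} \rightrightarrows \psi_{\textbf{X}}$ a.s. The main technical obstacle is bookkeeping: one must be careful that the ``almost surely'' holds on a single event (intersect the countably many null sets arising from the SLLN applied to each moment coefficient) and that the argument of $\textbf{M}_n(\textbf{u})^{-1}$ stays in the region of uniform invertibility — both are handled by the compactness of the parameter spaces.
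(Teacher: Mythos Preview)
Your proposal is correct and proceeds along broadly the same lines as the paper's proof, but the key technical device differs in a way worth noting. The paper establishes the uniform convergence of the moment functions $m_{nj}(\textbf{u},\textbf{v}) := \frac{1}{n}\sum_i [\textbf{u}'(\textbf{X}_i-\bar{\textbf{X}})\textbf{v}]^j$ by invoking a \emph{uniform} law of large numbers (using the integrable envelope $\|\textbf{X}_i\|_2^j$) and then handling the centering separately via an explicit bound involving $\|\bar{\textbf{X}}\|_2$. You instead exploit the polynomial structure: since $N_n$, $D_n$ and the entries of $\textbf{M}_n$ are polynomials in $(\textbf{u},\textbf{v})$ (resp.\ $\textbf{u}$) with finitely many scalar coefficients that are products of empirical moments and entries of $\bar{\textbf{X}}$, the \emph{ordinary} SLLN applied coordinate-wise plus compactness of $\mathcal{U}_0$ already yields uniform convergence. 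This is a slightly more elementary route and avoids the appeal to a uniform LLN; the price is that it is tied to the polynomial form of the objectives, whereas the paper's dominated-convergence argument would extend to more general indices. For $\psi_{n\textbf{X}}$ the paper chains together three intermediate functions $\psi_{n\textbf{X}}^1,\psi_{n\textbf{X}}^2,\psi_{n\textbf{X}}^3$ that swap sample for population pieces one at a time, while your bilinear expansion in the entries of $\textbf{M}_n(\textbf{u})^{-1}$ achieves the same in one step. Both handle the denominator (resp.\ matrix inverse) identically via the uniform lower bound on $D$ (resp.\ the smallest eigenvalue of $\textbf{M}$).
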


\begin{proof}[Proof of Lemma \ref{lem:uniform_convergence_objective_functions}]
    Because of the centering, we may, without loss of generality, assume that $\mathrm{E}(\textbf{X}) = \textbf{0}$. Starting then with $\kappa_{n\textbf{X}}$, let $m_{nj}: \mathcal{U}_0 \rightarrow \mathbb{R}$ be defined as
    \begin{align*}
        m_{nj}(\textbf{u}, \textbf{v}) := \frac{1}{n} \sum_{i=1}^n \left[ \{ \textbf{u}' ( \textbf{X}_i - \bar{\textbf{X}} ) \textbf{v} \}^j \right],
    \end{align*}
    for $j = 2, 4$. Denote the corresponding population-level functions as $m_j: \mathcal{U}_0 \rightarrow \mathbb{R}$ where $m_j(\textbf{u}, \textbf{v}) := \mathrm{E}\{ (\textbf{u}' \textbf{X} \textbf{v})^j \}$. Now,
    \begin{align}\label{eq:uniform_convergence_objective_functions_1}
    \begin{split}
        & \sup_{(\textbf{u}, \textbf{v}) \in \mathcal{U}_0} | m_{nj}(\textbf{u}, \textbf{v}) - m_{j}(\textbf{u}, \textbf{v}) | \\
        \leq& \sup_{(\textbf{u}, \textbf{v}) \in \mathcal{U}_0} | m_{nj}(\textbf{u}, \textbf{v}) - m_{nj}^*(\textbf{u}, \textbf{v}) | + \sup_{(\textbf{u}, \textbf{v}) \in \mathcal{U}_0} | m_{nj}^*(\textbf{u}, \textbf{v}) - m_{j}(\textbf{u}, \textbf{v}) |,
    \end{split}
    \end{align}
    where $m_{nj}^*(\textbf{u}, \textbf{v}) := (1/n) \sum_{i=1}^n \{ ( \textbf{u}' \textbf{X}_i \textbf{v} )^j \} $ is the uncentered sample function. Now, 
    \begin{align*}
        ( \textbf{u}' \textbf{X}_i \textbf{v} )^j \leq \| \textbf{X}_i \|_2^j,
    \end{align*}
    for $j = 2, 4$ and for all $(\textbf{u}, \textbf{v}) \in \mathcal{U}_0$. As the matrix normal distribution has finite moments of all order, $\mathrm{E}(\| \textbf{X}_i \|_2^j) $ is finite and, hence, by the uniform law of large numbers, the second term on the right-hand side of \eqref{eq:uniform_convergence_objective_functions_1} converges a.s. to zero for $ j = 2, 4$. For the first term and assuming $j = 4$, we write
    \begin{align}\label{eq:centering_removal}
    \begin{split}
        & \sup_{(\textbf{u}, \textbf{v}) \in \mathcal{U}_0} | m_{nj}(\textbf{u}, \textbf{v}) - m_{j}(\textbf{u}, \textbf{v}) | \\
        \leq& \sup_{(\textbf{u}, \textbf{v}) \in \mathcal{U}_0} \left| \textbf{u}'  \bar{\textbf{X}} \textbf{v}  \cdot \frac{1}{n} \sum_{i = 1}^n \sum_{j = 0}^3 \{ \textbf{u}' ( \textbf{X}_i - \bar{\textbf{X}} ) \textbf{v} \}^j \{ \textbf{u}' \textbf{X}_i \textbf{v} \}^{3-j} \right|\\
        \leq& \| \bar{\textbf{X}} \|_2 \sup_{(\textbf{u}, \textbf{v}) \in \mathcal{U}_0} \sum_{j = 0}^3  \left| \frac{1}{n} \sum_{i = 1}^n \{ \textbf{u}' ( \textbf{X}_i - \bar{\textbf{X}} ) \textbf{v} \}^j \{ \textbf{u}' \textbf{X}_i \textbf{v} \}^{3-j} \right|\\
        \leq& \| \bar{\textbf{X}} \|_2 \sum_{j = 0}^3 \frac{1}{n} \sum_{i = 1}^n \| \textbf{X}_i - \bar{\textbf{X}} \|_2^j \| \textbf{X}_i \|_2^{3-j},
    \end{split}
    \end{align}
    which, by the strong law of large numbers, converges a.s. to zero. The convergence for the case $j = 2$ can be shown similarly. Thus, by \eqref{eq:uniform_convergence_objective_functions_1},  $m_{nj} \rightrightarrows m_j $ a.s., $j = 2, 4$.

    Now, by the proof of Theorem \ref{theo:matrix_lda_optimal_mode_wise}, $m_{2}(\textbf{u}, \textbf{v}) = (\textbf{u}' \textbf{A} \textbf{u}) (\textbf{v}' \textbf{B} \textbf{v}) + \alpha_1 \alpha_2 (\textbf{u}' \textbf{H} \textbf{v})^2 $. As $\textbf{A}$ and $\textbf{B}$ are positive-definite, there thus exists $c_1, C_1 > 0$ such that, for all $(\textbf{u}, \textbf{v}) \in \mathcal{U}_0$, we have $c_1 < m_{2}(\textbf{u}, \textbf{v}) < C_1 $. Furthermore, a uniform upper bound, $C_2 > 0$ say, for $m_4(\textbf{u}, \textbf{v})$ can be obtained through the Cauchy-Schwarz inequality. Restrict next to a set $N \subset \Omega$ with $\mathbb{P}(N) = 1$ such that $m_{n2} \rightrightarrows m_2 $ and $m_{n4} \rightrightarrows m_4 $ (that is, the uniform convergences are point-wise instead of almost sure). Then, for a fixed $\omega \in \Omega $ there exists $n_0$ such that, for $n > n_0$, we have $c_1/2 < m_{n2}(\textbf{u}, \textbf{v}) < 2 C_1$ for all $(\textbf{u}, \textbf{v}) \in \mathcal{U}_0$. Thus, for $ n > n_0 $ and abbreviating $m_{n2} \equiv m_{n2} (\textbf{u}, \textbf{v})$ etc., we bound,
    \begin{align}\label{eq:omega_by_omega}
    \begin{split}
         &\sup_{(\textbf{u}, \textbf{v}) \in \mathcal{U}_0} | \kappa_{n\textbf{X}}(\textbf{u}, \textbf{v}) - \kappa_{\textbf{X}}(\textbf{u}, \textbf{v}) | \\
         \leq& \sup_{(\textbf{u}, \textbf{v}) \in \mathcal{U}_0} \frac{1}{m_{n2}^2 m_2^2} \left( m_2^2 | m_{n4} - m_4 | + m_4 | m_{n2}^2 - m_2^2| \right) \\
         \leq& \frac{4}{c_1^4} \left( C_1^2 \sup_{(\textbf{u}, \textbf{v}) \in \mathcal{U}_0}  | m_{n4} - m_4 | + 3 C_1 C_2 \sup_{(\textbf{u}, \textbf{v}) \in \mathcal{U}_0}  | m_{n2} - m_2 | \right),
    \end{split}
    \end{align}
    which converges to zero. The same holds for all $\omega \in N$, finally proving that $\kappa_{n\textbf{X}} \rightrightarrows \kappa_{\textbf{X}}$ a.s.
    
    For $\psi_{n\textbf{X}}$, we first define $\boldsymbol{\Sigma}_n: \mathbb{S}^{p-1} \to \mathbb{R}^{q \times q}$ as
    \begin{align*}
        \boldsymbol{\Sigma}_n(\textbf{u}) = \frac{1}{n} \sum_{i=1}^n ( \textbf{X}_i - \bar{\textbf{X}} )' \textbf{u} \textbf{u}'( \textbf{X}_i - \bar{\textbf{X}} ),
    \end{align*}
     along with its population version $\boldsymbol{\Sigma}: \mathbb{S}^{p-1} \to \mathbb{R}^{q \times q}$, having $ \boldsymbol{\Sigma}(\textbf{u}) = \mathrm{E}(\textbf{X}' \textbf{u} \textbf{u}' \textbf{X})$. Then,
     \begin{align*}
         &\mathrm{sup}_{\textbf{u} \in \mathbb{S}^{p-1}} \| \boldsymbol{\Sigma}_n(\textbf{u}) - \boldsymbol{\Sigma}(\textbf{u}) \|\\
         =& \mathrm{sup}_{\textbf{u} \in \mathbb{S}^{p-1}} \left\| \left\{ \frac{1}{n} \sum_{i=1}^n (\textbf{X}_i - \bar{\textbf{X}})' \otimes (\textbf{X}_i - \bar{\textbf{X}})' - \mathrm{E}(\textbf{X}' \otimes \textbf{X}') \right\} (\textbf{u} \otimes \textbf{u}) \right\|\\
         \leq& \left\| \frac{1}{n} \sum_{i=1}^n (\textbf{X}_i - \bar{\textbf{X}})' \otimes (\textbf{X}_i - \bar{\textbf{X}})' - \mathrm{E}(\textbf{X}' \otimes \textbf{X}') \right\|_2,
     \end{align*}
     where the final step used the relation $\| \textbf{u} \otimes \textbf{u} \| = \| \textbf{u} \|^2$. Invoking now the strong law of large numbers yields $\boldsymbol{\Sigma}_n \rightrightarrows \boldsymbol{\Sigma}$ a.s. (in the Frobenius norm). Now, $\boldsymbol{\Sigma}(\textbf{u}) = (\textbf{u}' \textbf{A} \textbf{u}) \textbf{B} + \alpha_1 \alpha_2 \textbf{H}' \textbf{u} \textbf{u}' \textbf{H}$ is positive definite and there exists $M > 0$ such that $\| \boldsymbol{\Sigma}(\textbf{u})^{-1} \|_2 < M$ for all $\textbf{u} \in \mathbb{S}^{p-1}$. Using next $ \boldsymbol{\Sigma}_n(\textbf{u})^{-1} -  \boldsymbol{\Sigma}(\textbf{u})^{-1} =  \boldsymbol{\Sigma}(\textbf{u})^{-1} ( \boldsymbol{\Sigma}(\textbf{u}) - \boldsymbol{\Sigma}_n(\textbf{u}) ) \boldsymbol{\Sigma}(\textbf{u})_n^{-1}$ and reasoning as in \eqref{eq:omega_by_omega} further shows that $\boldsymbol{\Sigma}^{-1}_n \rightrightarrows \boldsymbol{\Sigma}^{-1}$ a.s. (in the Frobenius norm)
     
     Define then,
     \begin{align*}
         \psi_{n\textbf{X}}^1(\textbf{u}) &:= \frac{1}{n} \sum_{i=1}^n \left\{ \textbf{u}' ( \textbf{X}_i - \bar{\textbf{X}} ) \boldsymbol{\Sigma}(\textbf{u})^{-1} ( \textbf{X}_i - \bar{\textbf{X}} )' \textbf{u} \right\}^2,\\
         \psi_{n\textbf{X}}^2(\textbf{u}) &:= \frac{1}{n} \sum_{i=1}^n \left\{ \textbf{u}' (\textbf{X}_i - \bar{\textbf{X}}) \boldsymbol{\Sigma}(\textbf{u})^{-1} \textbf{X}_i' \textbf{u} \right\}^2,\\
         \psi_{n\textbf{X}}^3(\textbf{u}) &:= \frac{1}{n} \sum_{i=1}^n \left( \textbf{u}' \textbf{X}_i \boldsymbol{\Sigma}(\textbf{u})^{-1} \textbf{X}_i' \textbf{u} \right)^2.
     \end{align*}
With a techniques similar to \eqref{eq:centering_removal}, one can show that
\begin{align*}
    \sup_{\textbf{u} \in \mathbb{S}^{p-1}} | \psi_{n\textbf{X}}^2(\textbf{u}) - \psi_{n\textbf{X}}^3(\textbf{u}) | \rightarrow 0 \quad \sup_{\textbf{u} \in \mathbb{S}^{p-1}} | \psi_{n\textbf{X}}^1(\textbf{u}) - \psi_{n\textbf{X}}^2(\textbf{u}) | \rightarrow 0,
\end{align*}
almost surely. Now, $( \textbf{u}' \textbf{X} \boldsymbol{\Sigma}(\textbf{u})^{-1} \textbf{X}' \textbf{u} )^2$ is dominated by the integrable random variable $M^2 \| \textbf{X} \|_2^4 $ and the map $\textbf{X} \mapsto ( \textbf{u}' \textbf{X} \boldsymbol{\Sigma}(\textbf{u})^{-1} \textbf{X}' \textbf{u} )^2$ is continous (since $\boldsymbol{\Sigma}(\textbf{u})$ is positive definite with the uniform lower bound $M^{-1}$ on its smallest eigenvalue). Hence, the uniform law of large numbers gives $\psi_{n\textbf{X}}^3 \rightrightarrows \psi_\textbf{X}$ a.s. and the above chain of convergences further implies that $\psi_{n\textbf{X}}^1 \rightrightarrows \psi_\textbf{X}$ a.s. The claim is thus proven once we show that $ \sup_{\textbf{u} \in \mathbb{S}^{p-1}} | \psi_{n\textbf{X}}^1(\textbf{u}) - \psi_{n\textbf{X}}(\textbf{u}) | \rightarrow 0 $ a.s. To see this we write, with the notation $\textbf{Y}_i := \textbf{X}_i - \bar{\textbf{X}}$, that,
\begin{align*}
  &\sup_{\textbf{u} \in \mathbb{S}^{p-1}} | \psi_{n\textbf{X}}^1(\textbf{u}) - \psi_{n\textbf{X}}(\textbf{u}) | \\
  \leq& \sup_{\textbf{u} \in \mathbb{S}^{p-1}} \frac{1}{n}\sum_{i=1}^n \left| [ \textbf{u}' \textbf{Y}_i \{ \boldsymbol{\Sigma}(\textbf{u})^{-1} -  \boldsymbol{\Sigma}_n(\textbf{u})^{-1} \} \textbf{Y}_i' \textbf{u} ] [ \textbf{u}' \textbf{Y}_i \{ \boldsymbol{\Sigma}(\textbf{u})^{-1} +  \boldsymbol{\Sigma}_n(\textbf{u})^{-1} \} \textbf{Y}_i' \textbf{u} ] \right| \\
  \leq& \frac{1}{n}\sum_{i=1}^n \| \textbf{Y}_i \|_2^4 \| \boldsymbol{\Sigma}(\textbf{u})^{-1} -  \boldsymbol{\Sigma}_n(\textbf{u})^{-1} \|_2 (\| \boldsymbol{\Sigma}(\textbf{u})^{-1} \|_2 + \|  \boldsymbol{\Sigma}_n(\textbf{u})^{-1} \|_2),
\end{align*}
which converges to zero almost surely, thus proving the almost sure uniform convergence of $\psi_{n\textbf{X}}$ to $\psi_\textbf{X}$.

\end{proof}

\begin{lemma}\label{lem:composition_uniform_convergence}
Let $(X,d_X)$, $(Y,d_Y)$ be compact metric spaces and let $(Z,d_Z)$ a metric space. Let $f_n:Y\to Z$ and $g_n:X\to Y$ be sequences of functions such that $f_n \rightrightarrows f$ and $g_n \rightrightarrows g$ for some $f:Y \to Z$ and $g:X \to Y$ such that $f$ is continuous. Then $f_n \circ g_n \rightrightarrows f\circ g$.
\end{lemma}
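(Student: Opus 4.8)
The plan is to control $d_Z\big(f_n(g_n(x)), f(g(x))\big)$ uniformly over $x \in X$ by inserting the intermediate point $f(g_n(x))$ and applying the triangle inequality:
\[
d_Z\big(f_n(g_n(x)), f(g(x))\big) \le d_Z\big(f_n(g_n(x)), f(g_n(x))\big) + d_Z\big(f(g_n(x)), f(g(x))\big).
\]
The first summand is immediately handled by the hypothesis $f_n \rightrightarrows f$: it is bounded by $\sup_{y \in Y} d_Z(f_n(y), f(y))$, a quantity independent of $x$ that tends to $0$ as $n \to \infty$.

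For the second summand I would invoke uniform continuity. Since $(Y, d_Y)$ is a compact metric space and $f \colon Y \to Z$ is continuous, $f$ is uniformly continuous on $Y$. Hence, given any $\varepsilon > 0$, there is $\delta > 0$ such that $d_Y(y, y') < \delta$ implies $d_Z(f(y), f(y')) < \varepsilon$. By $g_n \rightrightarrows g$ there is $N \in \mathbb{N}$ so that $\sup_{x \in X} d_Y(g_n(x), g(x)) < \delta$ for all $n \ge N$, and therefore $d_Z(f(g_n(x)), f(g(x))) < \varepsilon$ for every $x \in X$ simultaneously.

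Combining the two estimates, for $n \ge N$ we obtain
\[
\sup_{x \in X} d_Z\big(f_n(g_n(x)), f(g(x))\big) \le \sup_{y \in Y} d_Z(f_n(y), f(y)) + \varepsilon,
\]
and letting first $n \to \infty$ and then $\varepsilon \downarrow 0$ yields $f_n \circ g_n \rightrightarrows f \circ g$. There is no genuine obstacle in this argument; the single point requiring care is that the second summand cannot be controlled from mere pointwise continuity of $f$, since that would give a base-point-dependent modulus of continuity. It is precisely the compactness of $Y$ that upgrades continuity of $f$ to uniform continuity, supplying a single $\delta$ valid everywhere, which is what lets the uniform smallness of $g_n - g$ be transferred through $f$. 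Note that compactness of $X$ (and the fact that $g$ is a genuine map rather than a sequence of points) plays no role here and is not needed.
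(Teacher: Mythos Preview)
Your proof is correct and follows essentially the same approach as the paper: the same triangle-inequality split via the intermediate point $f(g_n(x))$, the same bound $\sup_{y\in Y} d_Z(f_n(y),f(y))$ for the first term, and the same use of uniform continuity of $f$ (from compactness of $Y$) together with $g_n \rightrightarrows g$ for the second. Your closing remark that compactness of $X$ is unnecessary is a valid observation not made explicit in the paper.
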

\begin{proof}[Proof of Lemma \ref{lem:composition_uniform_convergence}]
We have,
\begin{align}\label{eq:composition_uniform_convergence_1}
\begin{split}
    &\sup_{x\in X} \|f_n\circ g_n(x)-f\circ g(x)\|\\
    \leq& \sup_{x\in X} \|f_n\circ g_n(x)-f\circ g_n(x)\|+\sup_{x\in X} \|f\circ g_n(x) -f\circ g(x)\|
\end{split}
\end{align}
The first supremum in \eqref{eq:composition_uniform_convergence_1} has
$$
\sup_{x\in X} \|f_n\circ g_n(x)-f\circ g_n(x)\|\leq \sup_{y\in Y} \|f_n(y)-f(y)\|\rightarrow 0,
$$
since $f_n \rightrightarrows f$. Now, we show that the second supremum in \eqref{eq:composition_uniform_convergence_1} converges to $0$. First observe that $f$ is a continuous function defined on a compact set and therefore uniformly continuous. Hence, for every $\varepsilon>0$ there exists $\delta>0$ such that, for every $y_1, y_2 \in Y$, $d_Y(y_1,y_2)<\delta $ implies $ d_Z(f(y_1),f(y_2))<\varepsilon$. Moreover, having $g_n \rightrightarrows g$ implies that, for any $\delta>0$, there exists $n_0\in\mathbb{N}$, such that, for $n>n_0$, we have $d_Y(g_n(x),g(x))<\delta$ for all $x\in X$. Taking now $n>n_0$, we have for all $x \in X$ that $d_Y(g_n(x),g(x))<\delta$, implying that $d_Z(f(g_n(x)),f(g(x))) < \varepsilon$ for all $x\in X$. Thus, $(f\circ g_n) \rightrightarrows (f\circ g)$ and we have hence shown that $\sup_{x\in X} \|f_n\circ g_n(x)-f\circ g(x)\|\rightarrow 0$.
\end{proof}

\begin{lemma}\label{lem:convergence_of_maximizer}
Let $K\subset\mathbb{R}^d$ be a compact set and let $f_n:K\to\mathbb{R}$ be a sequence of random functions such that $f_n \rightrightarrows f$ a.s. for some continuous function $f:K\to \mathbb{R}$. Let $K_1\subset K$ be such that each $f_n$ has a.s. maximizer (not necessarily unique) in $K_1$, and $f$ has a unique maximizer $x_0$ in $K_1$. Then, for any sequence of maximizers $x_n$, we have $x_n\to_{a.s.}x$.
\end{lemma}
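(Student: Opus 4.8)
The plan is to prove convergence by a standard argmax-consistency argument, of the type used in M-estimation. First I would fix an outcome $\omega$ in the probability-one event on which $f_n \rightrightarrows f$, so that for the remainder of the argument everything is deterministic. Since $K$ is compact and each $x_n$ lies in the compact subset $K_1$ (assuming, without loss of generality, that $K_1$ is closed; if not, one works with its closure and uses that the unique maximizer of the continuous $f$ over $K_1$ remains a maximizer over $\overline{K_1}$, which requires a small argument or simply positing $K_1$ closed), the sequence $(x_n)$ has at least one convergent subsequence. The key step is to show that every convergent subsequence of $(x_n)$ has the same limit $x_0$; combined with the boundedness of $(x_n)$, this yields $x_n \to x_0$.

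So suppose $x_{n_k} \to y$ for some subsequence, with $y \in K_1$. I would then chain the following inequalities: for each $k$,
\[
f(x_0) \geq f(y) \geq \limsup_{k} f_{n_k}(x_{n_k}),
\]
where the second inequality follows from uniform convergence, $f_{n_k}(x_{n_k}) = f(x_{n_k}) + [f_{n_k}(x_{n_k}) - f(x_{n_k})]$, the first term converging to $f(y)$ by continuity of $f$ and the second going to $0$ by $f_n \rightrightarrows f$. On the other hand, using that $x_{n_k}$ maximizes $f_{n_k}$ over $K_1$ and that $x_0 \in K_1$,
\[
f_{n_k}(x_{n_k}) \geq f_{n_k}(x_0) = f(x_0) + [f_{n_k}(x_0) - f(x_0)] \to f(x_0),
\]
so $\liminf_k f_{n_k}(x_{n_k}) \geq f(x_0)$. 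Putting the two bounds together forces $f(y) = f(x_0)$, i.e., $y$ is a maximizer of $f$ over $K_1$. By the assumed uniqueness of the maximizer, $y = x_0$. Since this holds for every convergent subsequence and $(x_n)$ is bounded, $x_n \to x_0$. Finally, reinstating the dependence on $\omega$, this convergence holds on the probability-one event fixed at the start, giving $x_n \to_{a.s.} x_0$.

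The main obstacle, such as it is, is bookkeeping around the sets $K_1$ and $K$: one must be careful that the maximizers $x_n$ live in a set where the limiting maximizer is unique, and that $K_1$ is (or may be taken) compact so that subsequential limits stay inside it. There is no deep analytic difficulty; the argument is the classical "uniform convergence plus unique maximizer implies argmax convergence" lemma, and the only genuine content is the two-sided squeeze on $f_{n_k}(x_{n_k})$ above. I would state the result assuming $K_1$ compact (which is the case in all applications in this paper, where $K_1$ is a sphere or a closed subset thereof cut out by finitely many linear constraints) to avoid the mild closure technicality.
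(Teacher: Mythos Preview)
Your proposal is correct and follows essentially the same argmax-consistency argument as the paper: restrict to a probability-one event, extract a convergent subsequence using compactness, use uniform convergence plus continuity of $f$ to show the subsequential limit maximizes $f$ over $K_1$, invoke uniqueness, and conclude via the ``every convergent subsequence has the same limit'' criterion. The only difference is cosmetic---you frame the key step as a two-sided liminf/limsup squeeze, while the paper shows directly that $f_{n_k}(x_{n_k}) \to f(x')$ and then passes to the limit in $f_{n_k}(x_{n_k}) \geq f_{n_k}(x)$---and you are somewhat more careful than the paper about the closedness of $K_1$.
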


\begin{proof}[Proof of Lemma \ref{lem:convergence_of_maximizer}]
There exists a set $N \subset \Omega$ with $\mathbb{P}(N) = 1$ such that $f_n \rightrightarrows f$ in $N$ (that is, the uniform convergence is point-wise instead of almost sure) and each $f_n $ has a maximizer in $N$.

Consider then, for a fixed $\omega \in N$, an aribtrary sequence $(x_n)$ of maximizers of $f_n$ in $K_1$. As a compact set $K$ is bounded, making $(x_n)$ a bounded sequence. Hence, $(x_n)$ has a convergent subsequence, $(x_{n_k})$ say, such that $x_{n_k} \rightarrow x'$ as $k \rightarrow \infty$. Then, as,
\begin{align*}
    |f_{n_k}(x_{n_k}) - f(x') | \leq \sup_{x \in K} |f_{n_k}(x) - f(x)| + |f(x_{n_k}) - f(x')|,
\end{align*}
the continuity of $f$ shows that $f_{n_k}(x_{n_k}) \rightarrow f(x')$ as $k \rightarrow \infty$. Now, since for any $x\in K_1$, $f_{n_k}(x_{n_k})\geq f_{n_k}(x)$, then the same holds in the limit as well That is, $f(x') \geq f(x)$, for all $x\in K_1$, and, by the uniqueness of the maximizer of $f$ in $K_1$, we must thus have $x' = x_0$. Now, since the subsequence $(x_{n_k})$ was chosen arbitrarily, the statement holds for any convergent subsequence of $(x_n)$. Therefore, we conclude that every convergent subsequence of the bounded sequence $(x_n)$ converges to $x_0$, the global maximizer of $f$ in $K_1$. Hence, $x_n \rightarrow x_0 $. Since this holds for all $\omega \in N$, the claim is true.
\end{proof}

\begin{proof}[Proof of Theorem \ref{theo:strong_consistency}]
    We show the claim only for $\kappa_{n\textbf{X}}$ as the result for $\psi_{n\textbf{X}}$ follows after a straightforward adaptation of the following technique to single-argument functions.
    
    Denote the non-zero singular values of $ \textbf{A}^{-1/2} (\textbf{T}_2 - \textbf{T}_1) \textbf{B}^{-1/2} $ by $\sigma_1, \ldots , \sigma_d$ and fix $(\textbf{u}_{0j}, \textbf{v}_{0j}) $, $j = 1, \ldots, d$ as any collection of the corresponding singular pairs. Then, by the proof of Theorem \ref{theo:matrix_lda_optimal_higher_rank}, the sequences of $(\mathcal{G}_{1,\textbf{X}}, \mathcal{G}_{2,\textbf{X}})$-maximizers of $\kappa_\textbf{X}$ are precisely all collections of pairs
    \begin{align*}
	(\textbf{u}_j, \textbf{v}_j) = \left( s_{\textbf{u}j} \frac{\textbf{A}^{-1/2} \textbf{u}_{0j}}{\|\textbf{A}^{-1/2} \textbf{u}_{0j}\|}, s_{\textbf{v}j} \frac{\textbf{B}^{-1/2} \textbf{v}_{0j}}{\|\textbf{B}^{-1/2} \textbf{v}_{0j}\|} \right),
	\end{align*}
	where $ s_{\textbf{u}j}, s_{\textbf{v}j} \in \{ -1, 1 \} $, $j = 1, \ldots , d$. To obtain a unique representatives for the sequences of maximizers, we restrict ourselves to the following subset of $\mathcal{U}_0$.
    \begin{align*}
    \mathcal{U}_1 := \{ (\textbf{u}, \textbf{v}) \in \mathcal{U}_0 \mid & \mbox{the first non-zero element of } \textbf{u} \mbox{ is positive, }\\
    & \mbox{the first non-zero element of } \textbf{v} \mbox{ is positive} \}.
    \end{align*}
    In $\mathcal{U}_1$ there is thus a unique sequence of $(\mathcal{G}_{1,\textbf{X}}, \mathcal{G}_{2,\textbf{X}})$-maximizers of $\kappa_\textbf{X}$, denoted hereafter as $(\textbf{u}_1, \textbf{v}_1), \ldots, (\textbf{u}_d, \textbf{v}_d) $.
    
    We first show that there exists a.s., for all $n > pq$, a sequence $(\textbf{u}_{n1}, \textbf{v}_{n1}), \ldots , (\textbf{u}_{nd}, \textbf{v}_{nd})$ of $(\mathcal{G}_{n1,\textbf{X}}, \mathcal{G}_{n2,\textbf{X}})$-maximizers of $\kappa_\textbf{X}$ in $\mathcal{U}_1$ (note that any maximizer in $\mathcal{U}_0$ can be brought to $\mathcal{U}_0$ with a suitable change of signs). Consider, for a fixed $n$, the first pair $(\textbf{u}_{n1}, \textbf{v}_{n1})$: the function $\kappa_{nX}:\mathcal{U}_1 \rightarrow \mathbb{R} $ is continuous everywhere on its domain as long as $ (1/n) \sum_{i=1}^n (\textbf{X}_i - \bar{\textbf{X}}) \textbf{v} \textbf{v}' (\textbf{X}_i - \bar{\textbf{X}})' $ is a positive definite matrix for all $\textbf{v} \in \mathbb{R}^{q - 1}$, see Section \ref{sec:well_defined}. To see that this holds almost surely, we write,
    \begin{align*}
        \textbf{u}' \frac{1}{n} \sum_{i=1}^n (\textbf{X}_i - \bar{\textbf{X}}) \textbf{v} \textbf{v}' (\textbf{X}_i - \bar{\textbf{X}})' \textbf{u} = (\textbf{v} \otimes \textbf{u})' \left\{ \frac{1}{n} \sum_{i=1}^n \{ (\textbf{z}_i - \bar{\textbf{z}}) (\textbf{z}_i - \bar{\textbf{z}})' \right\} (\textbf{v} \otimes \textbf{u}),
    \end{align*}
    where $\textbf{z}_i := \mathrm{vec}(\textbf{X}_i)$. Now, since the distribution of $\textbf{X}_i$ admits a density (w.r.t. the Lebesgue measure), the sample covariance matrix $(1/n) \sum_{i=1}^n (\textbf{z}_i - \bar{\textbf{z}}) (\textbf{z}_i - \bar{\textbf{z}})'$ is a.s. positive definite as soon as $n > pq$. 
    
    Thus (for large enough $n$), $\kappa_{nX}:\mathcal{U}_1 \rightarrow \mathbb{R} $ is an a.s. continuous function on a compact domain and, hence, there exists a.s. a maximizing pair $(\textbf{u}_{n1}, \textbf{v}_{n1})$. The second pair is obtained through maximization of the restriction of $\kappa_{n\textbf{X}} $ to $\mathcal{U}_1 \cap \mathcal{C}_{n1} $, where
    \begin{align*}
        \mathcal{C}_{n1} := \{ (\textbf{u}, \textbf{v}) \in \mathbb{R}^{p + q} \mid \textbf{u}'\textbf{G}_{n11,\textbf{X}} \textbf{u}_{n1} = 0, \textbf{v}'\textbf{G}_{n21,\textbf{X}} \textbf{v}_{n1} = 0  \}.
    \end{align*}
    Now, the set $\mathcal{U}_1 \cap \mathcal{C}_{n1}$ is compact as the intersection of a closed and a compact set, and reasoning as in the first step, there exists (for the given $n$) a.s. a pair $(\textbf{u}_{n2}, \textbf{v}_{n2})$ maximizing $\kappa_\textbf{X}$ in $\mathcal{U}_1 \cap \mathcal{C}_{n1} $. The a.s. existence of the remaining pairs (for the given $n$) is shown similarly. Finally, as countable intersections of almost sure sets are almost sure, the sequence of $(\mathcal{G}_{n1,\textbf{X}}, \mathcal{G}_{n2,\textbf{X}})$-maximizers exists for all $n > pq$ a.s.
    
    The almost sure convergence $\textbf{u}_{n1} \rightarrow \textbf{u}_1 $ follows now from Lemmas \ref{lem:uniform_convergence_objective_functions} and \ref{lem:convergence_of_maximizer} by observing that $\kappa_\textbf{X}$ is now a continuous function because $\textbf{X}$ admits a density, see the proof of Lemma \ref{lem:existence} and Appendix \ref{sec:well_defined}. Since an arbitrary element of $\mathcal{U}_0$ can be brought to $\mathcal{U}_1$ by changing the signs of its $\textbf{u}$- and \textbf{v}-parts suitably, we have thus shown the claim for the first pair of maximizers.
    
    We prove the remaining almost sure convergences inductively. That is, assume that $(\textbf{u}_{nj},\textbf{v}_{nj})\rightarrow (\textbf{u}_{j},\textbf{v}_{j})$ a.s., for $j = 1, \ldots , k - 1$. The pair is found as a maximizer of $\kappa_{n\textbf{X}}$ in $\mathcal{U}_1 \cap \mathcal{C}_{nk} $, where
    \begin{align*}
        \mathcal{C}_{nk} := \{ (\textbf{u}, \textbf{v}) \in \mathbb{R}^{p + q} \mid \textbf{u}'\textbf{G}_{n1j} \textbf{u}_{nj} = 0, \textbf{v}'\textbf{G}_{n2j} \textbf{v}_{nj} = 0, \quad j = 1, \ldots , k - 1  \}.
    \end{align*}
    Define also the population counterpart of $\mathcal{C}_{nk}$ as
    \begin{align*}
        \mathcal{C}_{k} := \{ (\textbf{u}, \textbf{v}) \in \mathbb{R}^{p + q} \mid \textbf{u}'\textbf{G}_{1j} \textbf{u}_{j} = 0, \textbf{v}'\textbf{G}_{2j} \textbf{v}_{j} = 0, \quad j = 1, \ldots , k - 1  \}.
    \end{align*}
    Observe still that, for any $j = 1, \ldots , k - 1$, the law of large numbers gives $\textbf{G}_{n1j} \rightarrow \textbf{G}_{1j}$ a.s. and, hence, that $\textbf{G}_{n1j} \textbf{u}_{nj} \rightarrow \textbf{G}_{1j} \textbf{u}_j$ a.s., and similarly for $\textbf{v}$.
    
    We being by constructing explicit bases for the subspaces $\mathcal{C}_{nk}$ and $\mathcal{C}_{k}$ of $\mathbb{R}^{p + q}$. By the proof of Theorem \ref{theo:matrix_lda_optimal_higher_rank}, we have
    \begin{align*}
        \textbf{G}_{1j} \textbf{u}_{j} = s_{\textbf{u}j} \|\textbf{A}^{-1/2} \textbf{u}_{0j}\|^{-1} \|\textbf{B}^{-1/2} \textbf{v}_{0j}\|^{-2} (1 + \beta \sigma_j^2) \textbf{A}^{1/2} \textbf{u}_{0j},
    \end{align*}
    showing that the vectors $\textbf{G}_{11,\textbf{X}} \textbf{u}_{1}, \ldots, \textbf{G}_{1(k-1)} \textbf{u}_{k-1} \in \mathbb{R}^p$ are linearly independent. Let now $\textbf{J}_1 := (\textbf{G}_{11,\textbf{X}} \textbf{u}_{1} \mid \cdots \mid \textbf{G}_{1(k-1)} \textbf{u}_{k-1} \mid \textbf{r}_k \mid \cdots \mid \textbf{r}_p) \in \mathbb{R}^{p \times p}$ where are $\textbf{r}_k, \ldots , \textbf{r}_p$ are some fixed vectors which make $\textbf{J}_1$ have full rank. Apply now Gram-Schmidt orthogonalization to $\textbf{J}_1$ and denote the orthonormal matrix of the last $p - k + 1$ obtained vectors as $\textbf{K}_1 \in \mathbb{R}^{p \times (p - k + 1)}$. Then any $\textbf{u} \in \mathrm{col}(\textbf{K}_1) $ has $ \textbf{u} \in \mathcal{C}_{1k} := \{ \textbf{u} \in \mathbb{R}^p \mid \textbf{u}'\textbf{G}_{1j} \textbf{u}_{j} = 0, \quad  j = 1, \ldots , k - 1 \}$ and the columns of $\textbf{K}_1$ form a basis of $\mathcal{C}_{1k}$. A basis for $\mathcal{C}_{k}$ is now obtained by carrying out the above construction also for $\textbf{v}$ as
    \begin{align*}
        \textbf{J} = \begin{pmatrix} \textbf{K}_1 & \textbf{0} \\
        \textbf{0} & \textbf{K}_2
        \end{pmatrix} \in \mathbb{R}^{(p + q) \times (p + q - 2k + 2)}.
    \end{align*}
    
    For the remainder of the proof, we work with a fixed probability element $\omega \in N$ where the set $N$ has $\mathbb{P}(N) = 1$ and is such that all the almost sure convergences in the previous paragraph hold point-wise in $N$, the maximizer $(\textbf{u}_{nk}, \textbf{v}_{nk})$ exists for all $n \in \mathbb{N}$ in $N$ and the sample objective function $\kappa_n$ is continuous for all $n > pq$ in $N$.
    
    We next construct bases for the subspaces $\mathcal{C}_{nk} $. For $n$ large enough, the vectors $\textbf{G}_{n11,\textbf{X}} \textbf{u}_{n1}, \ldots, \textbf{G}_{n1(k-1)} \textbf{u}_{n(k-1)}, \textbf{r}_k, \ldots, \textbf{r}_p \in \mathbb{R}^p$ form a linearly independent set (this happens as a consequence of the continuity of determinant) and we thus construct $\textbf{K}_{n1}$ analogously to its population counterpart. Constructing $\textbf{K}_{n2}$ similarly, we obtain the orthonormal matrices whose columns give (for $n$ large enough) bases for the $ \mathcal{C}_{nk}$,
    \begin{align*}
        \textbf{J}_{n} = \begin{pmatrix} \textbf{K}_{n1} & \textbf{0} \\
        \textbf{0} & \textbf{K}_{n2}
        \end{pmatrix} \in \mathbb{R}^{(p + q) \times (p + q - 2k + 2)}.
    \end{align*}
    Now, as the Gram-Schmidt process constitutes of additions and other basic vector space operations, we have $\textbf{J}_n \rightarrow \textbf{J}$.
    
    Denote $\mathcal{W}_0 := \mathbb{S}^{p - k + 1} \times \mathbb{S}^{q - k + 1}$ and let $g:\mathcal{W}_0 \rightarrow (\mathcal{U}_0 \cap \mathcal{C}_k) $ and $g_n:\mathcal{W}_0 \rightarrow (\mathcal{U}_0 \cap \mathcal{C}_k) $ be defined, respectively, as $g(\textbf{s}, \textbf{t}) = \textbf{J} (\textbf{s}', \textbf{t}')'$ and  $g_n(\textbf{s}, \textbf{t}) = \textbf{J}_n (\textbf{s}', \textbf{t}')'$ (observe that $g$ indeed maps the Cartesian product of unit spheres into the Cartesian product of unit spheres). Define the compositions $f := (\kappa_\textbf{X} \circ g): \mathcal{W}_0 \rightarrow \mathbb{R} $ and $f_n := (\kappa_{n\textbf{X}} \circ g_n): \mathcal{W}_0 \rightarrow \mathbb{R} $. Now, $g_n \rightrightarrows g$ as
    \begin{align*}
        \sup_{(\textbf{s}, \textbf{t}) \in \mathcal{W}_0} \| g(\textbf{s}, \textbf{t}) - g_n(\textbf{s}, \textbf{t}) \| \leq \| \textbf{J}_n - \textbf{J} \|_2 \rightarrow 0.
    \end{align*}
    Applying now Lemma \ref{lem:composition_uniform_convergence}, we see that $f_n \rightrightarrows f$.
    
    Let next,
    \begin{align*}
        \mathcal{W}_1 := \{ (\textbf{s}, \textbf{t}) \in \mathcal{W}_0 \mid & \mbox{the first non-zero elements of the $p$ and $q$-dimensional}\\
        &\mbox{subvectors of } g(\textbf{s}, \textbf{t}) \mbox{ are positive} \}.
    \end{align*}
    Now, since $(\textbf{u}_k, \textbf{v}_k) $ is the unique maximizer of $\kappa_\textbf{X}$ in $\mathcal{U}_1 \cap \mathcal{C}_{k} $, the function $f$ is maximized in $\mathcal{W}_1$ uniquely by the vector $(\textbf{s}, \textbf{t}) \in \mathcal{W}_1$ satisfying $g(\textbf{s}, \textbf{t}) = (\textbf{u}_k, \textbf{v}_k)$. Similarly, to the sample maximizer $(\textbf{u}_{nk}, \textbf{v}_{nk})$ there corresponds $(\textbf{s}_n, \textbf{t}_n)$ such that $g_n(\textbf{s}_n, \textbf{t}_n) = (\textbf{u}_{nk}, \textbf{v}_{nk})$ and such that $(s_{n{u}k} \textbf{s}_n, s_{n{v}k} \textbf{t}_n)$ is, for some signs $s_{n{u}k}, s_{n{v}k} \in \{ -1, 1 \}$, a maximizer of $f_n$ in $\mathcal{W}_1$ (the signs are needed to guarantee that the maximizer in indeed a member of $\mathcal{W}_1$). Reasoning now as in the proof of Theorem \ref{lem:convergence_of_maximizer}, we obtain $(s_{n{u}k} \textbf{s}_n, s_{n{v}k} \textbf{t}_n) \rightarrow (\textbf{s}, \textbf{t}) $ which, in turn, implies that
    \begin{align*}
        \begin{pmatrix}
        s_{n{u}k} \textbf{u}_{nk} \\
        s_{n{v}k} \textbf{v}_{nk}
        \end{pmatrix} 
        =
        \textbf{J}_n \begin{pmatrix}
        s_{n{u}k} \textbf{s}_n \\
        s_{n{v}k} \textbf{t}_n
        \end{pmatrix} \rightarrow
        \textbf{J} \begin{pmatrix}
        \textbf{s} \\
        \textbf{t}
        \end{pmatrix} = 
        \begin{pmatrix}
        \textbf{u}_{k} \\
        \textbf{v}_{k}
        \end{pmatrix}.
    \end{align*}
    Since the above holds for all $\omega \in N$, we have 
    \begin{align*}
        s_{n{u}j} \textbf{u}_{nk} \rightarrow  \textbf{u}_{k} \quad \mbox{and} \quad s_{n{v}k} \textbf{v}_{nk} \rightarrow \textbf{v}_k,
    \end{align*}
    almost surely, completing the proof.
    
\end{proof}


\begin{proof}[Proof of Corollary \ref{cor:strong_consistency}]
    Let, without loss of generality, $j = 1$. We restrict to an almost sure set $N$ of elements of $\Omega$ where the uniform convergence of the sample objective function in Lemma \ref{lem:uniform_convergence_objective_functions} is point-wise instead of almost sure and where $(s_{n{u}1} \textbf{u}_{n1}, s_{n{v}1} \textbf{v}_{n1}) \rightarrow (\textbf{u}_{1}, \textbf{v}_{1})$ as $n \rightarrow \infty$. 
    
    Then,
    \begin{align*}
        &| \kappa_{n\textbf{X}}(\textbf{u}_{n1}, \textbf{v}_{n1}) - \kappa_\textbf{X}(\textbf{u}_{1}, \textbf{v}_{1}) |\\
        =& | \kappa_{n\textbf{X}}(s_{n{u}1} \textbf{u}_{n1}, s_{n{v}1} \textbf{v}_{n1}) - \kappa_\textbf{X}(\textbf{u}_{1}, \textbf{v}_{1}) |\\
        \leq& \sup_{(\textbf{u}, \textbf{v}) \in \mathcal{U}_0} | \kappa_{n\textbf{X}}(\textbf{u}, \textbf{v}) - \kappa_{\textbf{X}}(\textbf{u}, \textbf{v}) | + | \kappa_{\textbf{X}}(s_{n{u}1} \textbf{u}_{n1}, s_{n{v}1} \textbf{v}_{n1}) - \kappa_{\textbf{X}}(\textbf{u}_{1}, \textbf{v}_{1}) |,
    \end{align*}
    where the first term goes to zero by Lemma \ref{lem:uniform_convergence_objective_functions} and the second one by the continuity of $\kappa_\textbf{X}$. Hence, $\kappa_{n\textbf{X}}(\textbf{u}_{n1}, \textbf{v}_{n1}) \rightarrow \kappa_\textbf{X}(\textbf{u}_{1}, \textbf{v}_{1}) $, a.s., which further implies that $\theta_{n1} \rightarrow \theta_1$, a.s., and $\lambda_{n1} \rightarrow \lambda_1 = \sigma_1$, a.s. 
    
    By the law of large of numbers, we have $z_{n12} \rightarrow \mathrm{E} ( [ \textbf{u}_1 \{ \textbf{X} - \mathrm{E}(\textbf{X})  \} \textbf{v}_1  ]^2 )$, a.s., and $s_{nu1} s_{nv1} z_{n13} \rightarrow \mathrm{E} ( [ \textbf{u}_1 \{ \textbf{X} - \mathrm{E}(\textbf{X})  \} \textbf{v}_1  ]^3 )$, a.s. Hence, the sign of $s_{nu1} s_{nv1} (\alpha_1 - \alpha_2)^{-1} z_{n13}$ converges almost surely to the sign of $  (\alpha_1 - \alpha_2)^{-1} \mathrm{E} ( [ \textbf{u}_j' \{ \textbf{X} - \mathrm{E}(\textbf{X})  \} \textbf{v}_j  ]^3 ) $, implying that $s_{nu1} s_{nv1} s_{n1} \rightarrow s_1$, a.s., where $s_1$ is an in Theorem \ref{theo:matrix_lda_optimal_higher_rank}. Plugging now everything in to the definition of $\textbf{W}_{n\mathrm{LDA}}$ and invoking the decomposition of $\textbf{W}_{\mathrm{LDA}}$ given in the statement of Theorem~\ref{theo:matrix_lda_optimal_higher_rank} yields the claim.
\end{proof}

\section{Alternative algorithm for optimizing $\kappa_{n\textbf{X}}$}\label{sec:algorithm_appendix}

The following is an alternative, fixed-point algorithm for the optimization of $\kappa_{n\textbf{X}}$.

\begin{algorithm}[H]
\caption{Fixed-point algorithm for the optimization of $\kappa_{n\textbf{X}}$.}\label{alg::kappa_fixed_point}
\SetKwInOut{Input}{Input}
        \Input{$\textbf{X}_1,\dots \textbf{X}_n\in\mathbb{R}^{p \times q}$ centered observations;}
        \BlankLine
	Allocate $\textbf{U},\,\textbf{V},\,\textbf{G}_{\textbf{u}},\,\textbf{G}_\textbf{v}$;\\
	Initialize $\textbf{v}_0$, $\|\textbf{v}_0\|=1$;\\
 	Set the tolerance $\varepsilon>0$ and $e=\varepsilon+1$;\\
 	Initialize $\textbf{G}_{\textbf{u}}^{\perp}\leftarrow\textbf{I}_p$, $\textbf{G}_{\textbf{v}}^{\perp}\leftarrow\textbf{I}_q$;\\ 
 	Calculate the projections $\textbf{y}_{\textbf{v},i} = \textbf{X}_i\textbf{v}_0$, $i=1,\dots,n$;\\  
    Calculate $\textbf{u}_0$ as unit length mimimizer of $\kappa_{n\textbf{y}_\textbf{v}}$;\\	
    
	\While{$\min\{p,q\}>1$}{
	$p\leftarrow p-1\,$; $q\leftarrow q-1$;\\
	
	Set $\textbf{X}_{temp,i}\leftarrow \textbf{X}_i$, $i=1,\dots,n$;\\
	Project $\textbf{X}_i\leftarrow\textbf{G}_{\textbf{u}}^{\perp}\textbf{X}_i\textbf{G}_{\textbf{v}}^{\perp}$, $i=1,\dots,n$;\\

    \While{$e>\varepsilon$}{

	Calculate the projections $\textbf{y}_{\textbf{u},i}=\textbf{X}_i'\textbf{u}_0$, $i=1,\dots,n$;\\  
    Calculate $\textbf{v}_1$ as unit length optimizer of $\kappa_{n\textbf{y}_\textbf{u}}$;\\
    
    Calculate the projections $\textbf{y}_{\textbf{v},i}=\textbf{X}_i\textbf{v}_1$, $i=1,\dots,n$;\\  
    Calculate $\textbf{u}_1$ as unit length optimizer of $\kappa_{n\textbf{y}_\textbf{v}}$;\\	
    
    $e_\textbf{u}=\min\{\|\textbf{u}_0-\textbf{u}_1\|^2,\|\textbf{u}_0+\textbf{u}_1\|^2\}\,$; $e_\textbf{v}=\min\{\|\textbf{v}_0-\textbf{v}_1\|^2,\|\textbf{v}_0+\textbf{v}_1\|^2\}$;\\
    $e\leftarrow e_\textbf{u}+e_\textbf{v}$;\\
    
    $(\textbf{u}_0,\textbf{v}_0)\leftarrow (\textbf{u}_1,\textbf{v}_1)$;\\
    
	  }
	
    Append $\textbf{U}\leftarrow[\textbf{U},\textbf{G}_{\textbf{u}}^{\perp}\textbf{u}_1]$,  $\textbf{V}\leftarrow[\textbf{V},\textbf{G}_{\textbf{v}}^{\perp}\textbf{v}_1]$;\\
    
   Append $\textbf{G}_\textbf{u}\leftarrow[\textbf{G}_\textbf{u},\frac{1}{n}\sum_{i=1}^n\textbf{u}_1'\textbf{X}_i\textbf{v}_1 \cdot \textbf{X}_i\textbf{v}_1]$, $\textbf{G}_\textbf{v}\leftarrow[\textbf{G}_\textbf{v},\frac{1}{n}\sum_{i=1}^n\textbf{u}_1'\textbf{X}_i\textbf{v}_1 \cdot \textbf{X}_i' \textbf{u}_1]$;\\
    
    Calculate the orthogonal complements $\textbf{G}_\textbf{u}^\perp$, $\textbf{G}_\textbf{v}^\perp$;\\
    
    Set $\textbf{X}_i\leftarrow\textbf{X}_{temp,i}$, $i=1,\dots,n$;
    }
     
	Return $(\textbf{U},\,\textbf{V})$;
\end{algorithm}

\bibliographystyle{apalike}
\bibliography{refs}
\newpage
\end{document}